\documentclass{tran-l}

%% PACKAGES
\usepackage{amssymb}
\usepackage[english]{babel}
\usepackage[T1]{fontenc}
\usepackage{color}
\usepackage{enumerate}
\usepackage{mathtools}
\usepackage{url}
\usepackage[caption=false]{subfig}
\usepackage{pgfplots}
	\pgfplotsset{compat=newest}
\usepackage{tikz}

%% ENVIRONMENTS
\newtheorem{theorem}{Theorem}[section]
\newtheorem{lemma}[theorem]{Lemma}
\newtheorem{proposition}[theorem]{Proposition}
\newtheorem{corollary}[theorem]{Corollary}

\theoremstyle{definition}

\newtheorem*{example}{Example}

\theoremstyle{remark}
\newtheorem{remark}[theorem]{Remark}

\numberwithin{equation}{section}

%% DEFINITIONS
\newcommand{\ip}[2]{\langle#1,#2\rangle}
\newcommand{\ipX}[2]{\langle#1,#2\rangle_{X}}
\newcommand{\ipY}[2]{\langle#1,#2\rangle_{Y}}

	\let\abs=\envert

	\let\norm=\enVert
\newcommand{\normX}[1]{\lVert#1\rVert_{X}}
\newcommand{\normY}[1]{\lVert#1\rVert_{Y}}

\newcommand{\RR}{\mathbb{R}}
\newcommand{\LL}{\mathcal{L}}
\newcommand{\OO}{\mathcal{O}}

\renewcommand\d{\mathop{}\!\mathrm{d}}
\DeclareMathOperator{\e}{\mathrm{e}}
\DeclareMathOperator{\sri}{\mathrm{sri}}

\DeclareMathOperator{\argminmax}{\mathrm{argminmax}}
\DeclareMathOperator{\proj}{\mathrm{proj}}
\DeclareMathOperator{\id}{\mathrm{Id}}
\DeclareMathOperator{\zer}{\mathrm{zer}}

%% MISC
\copyrightinfo{}{}

\begin{document}

\title[Asymptotic behavior of the (AHT) differential system]{%
	Asymptotic behavior of the Arrow--Hurwicz differential system with Tikhonov regularization
}
\author[F.\ Battahi]{Fouad Battahi}
\address{%
	Cadi Ayyad University, Faculty of Sciences Semlalia, 40000 Marrakech,
	Morocco
}
\email{f.battahi.ced@uca.ac.ma}
\author[Z.\ Chbani]{Zaki Chbani}
\address{%
	Cadi Ayyad University, Faculty of Sciences Semlalia, 40000 Marrakech,
	Morocco
}
\email{chbaniz@uca.ac.ma}
\author[S. K.\ Niederl{\"a}nder]{Simon K.\ Niederl{\"a}nder}
\address{%
	Ingolstadt University of Applied Sciences, 85049 Ingolstadt, Germany
}
\email{simon.niederlaender@thi.de}
\author[H. Riahi]{Hassan Riahi}
\address{%
	Cadi Ayyad University, Faculty of Sciences Semlalia, 40000 Marrakech,
	Morocco
}
\email{h-riahi@uca.ac.ma}

\subjclass[2020]{Primary 37N40, 46N10; Secondary 49K35, 90C25}
\keywords{%
	Arrow--Hurwicz differential system, Tikhonov regularization, viscosity curve,
	convex minimization, saddle-value problem
}

\dedicatory{Dedicated to the memory of H{\'e}dy Attouch}

\begin{abstract}
	In a real Hilbert space setting, we investigate the asymptotic behavior of the solutions of the classical Arrow--Hurwicz differential system combined with Tikhonov regularizing terms. Under some newly proposed conditions on the Tikhonov terms involved, we show that the solutions of the regularized Arrow--Hurwicz differential system strongly converge toward the element of least norm within its set of zeros. Moreover, we provide fast asymptotic decay rate estimates for the so-called primal-dual gap function and the norm of the solutions' velocity. If, in addition, the Tikhonov regularizing terms are decreasing, we provide some refined estimates in the sense of an exponentially weighted moving average. Under the additional assumption that the governing operator of the Arrow--Hurwicz differential system satisfies a reverse Lipschitz condition, we further provide a fast rate of strong convergence of the solutions toward the unique zero. We conclude our study by deriving the corresponding decay rate estimates with respect to the so-called viscosity curve. Numerical experiments illustrate our theoretical findings.
	\vspace{-25pt}
\end{abstract}

\maketitle
%% INTRODUCTION
\section{Introduction}\label{sec1}
Let $X$ and $Y$ be real Hilbert spaces endowed with inner products $\ipX{\,\cdot\,}{\,\cdot\,}$, $\ipY{\,\cdot\,}{\,\cdot\,}$ and associated norms $\normX{\,\cdot\,}$, $\normY{\,\cdot\,}$. Consider the minimization problem
\begin{equation}
	\renewcommand{\theequation}{P}\tag{\theequation}\label{sec1:cvxproblem}
	\min\,\{f(x):Ax=b\},
\end{equation}
where $f:X\to\RR$ is a convex and continuously differentiable function, $A:X\to Y$ a linear and continuous operator, and $b\in Y$. We assume that the (closed and convex) set of optimal solutions of \eqref{sec1:cvxproblem} is non-empty, i.e.,
\begin{equation*}
	S:=\{x\in C:f(x)=\inf\nolimits_{C}f\}\neq\emptyset
\end{equation*}
with $C:=\{x\in X: Ax=b\}$ denoting the feasible set of \eqref{sec1:cvxproblem}. Recall that \eqref{sec1:cvxproblem} admits an optimal solution whenever $C$ is non-empty and, for instance, $f$ is coercive, that is, $\lim_{\normX{x}\to+\infty}f(x)=+\infty$.

Let us associate with \eqref{sec1:cvxproblem} the Lagrangian
\begin{align*}
	L:X\times Y&\longrightarrow\RR\\
	(x,\lambda)&\longmapsto f(x)+\ipY{\lambda}{Ax-b}
\end{align*}
which, by construction, is a convex-concave and continuously differentiable bifunction. Classically, the convex minimization problem \eqref{sec1:cvxproblem} admits an equivalent representation in terms of the saddle-value problem
\begin{equation*}
	\min_{x\in X}\sup_{\lambda\in Y}L(x,\lambda).
\end{equation*}
It is well known (see, e.g., Ekeland and T{\'e}mam \cite{IE-RT:99}) that $\bar{x}\in X$ is an optimal solution of \eqref{sec1:cvxproblem}, and $\bar{\lambda}\in Y$ a corresponding Lagrange multiplier, if and only if $(\bar{x},\bar{\lambda})$ is a saddle point of $L$, that is,
\begin{equation*}
	L(\bar{x},\lambda)\leq L(\bar{x},\bar{\lambda})\leq
	L(x,\bar{\lambda})\quad\forall(x,\lambda)\in X\times Y.
\end{equation*}
Equivalently, $(\bar{x},\bar{\lambda})\in X\times Y$ is a saddle point of $L$ if and only if $(\bar{x},\bar{\lambda})$ satisfies the system of primal-dual optimality conditions
\begin{align*}
	\begin{cases}
		\begin{aligned}
			\nabla f(x)+A^{\ast}\lambda&=0\\[.75ex]
			Ax-b&=0
		\end{aligned}
	\end{cases}
\end{align*}
with $\nabla f:X\to X$ denoting the gradient of $f$, and $A^{\ast}:Y\to X$ the adjoint operator of $A$. Throughout, we denote by $M\subset Y$ the (possibly empty, closed, and convex) set of Lagrange multipliers associated with \eqref{sec1:cvxproblem}. Recall that a Lagrange multiplier (and thus, a saddle point of $L$) exists, for example, whenever the constraint qualification
\begin{equation*}
	b\in\sri A(X)
\end{equation*}
is verified. Here, for a convex set $K\subset Y$, we denote by
\begin{equation*}
	\sri K =\Bigl\{x\in K:\bigcup_{\mu>0}\mu(K-x)\
	\text{is a closed linear subspace of $Y$}\Bigr\}
\end{equation*}
its strong relative interior; we refer the reader to Bauschke and Combettes \cite{HHB-PLC:17} (see also Bo\c{t} \cite{RIB:10}) for a detailed exposition of constraint qualifications.

In this work, we investigate the nonautonomous differential system
\begin{equation}
	\renewcommand{\theequation}{AHT}\tag{\theequation}\label{sec1:arrhurtik}
	\begin{cases}
		\begin{aligned}
			\dot{x}+\nabla f(x)+A^{\ast}\lambda+\varepsilon(t)x&=0\\
			\dot{\lambda}+b-Ax+\varepsilon(t)\lambda&=0
		\end{aligned}
	\end{cases}
\end{equation}
relative to the convex minimization problem \eqref{sec1:cvxproblem}. The \eqref{sec1:arrhurtik} evolution system essentially combines the classical ``generalized steepest descent dynamics'' introduced by Arrow and Hurwicz \cite{KJA-LH:51} (see also Arrow et al.~\cite{KJA-LH-HU:58} and Kose \cite{TK:56}) with Tikhonov regularizing terms; cf. Tikhonov and Ars{\'e}nine \cite{ANT-VYA:74}. Here, $\varepsilon:[t_{0},+\infty[\ \to\ ]0,+\infty[$ denotes, for some $t_{0}\geq0$, the Tikhonov regularization function which is assumed to be continuously differentiable such that
\begin{equation*}
	\lim_{t\to+\infty}\varepsilon(t)=0.
\end{equation*}
In view of this regularization, the \eqref{sec1:arrhurtik} differential system is governed by the perturbed operator
\begin{equation*}
	T_{t}:=T+\varepsilon(t)\id,
\end{equation*}
where
\begin{align*}
	T:X\times Y&\longrightarrow X\times Y\\
	(x,\lambda)&\longmapsto
	\big(\nabla_{x}L(x,\lambda),-\nabla_{\lambda}L(x,\lambda)\big)
\end{align*}
is the maximally monotone operator associated with the ``saddle function'' $L$; see, e.g., Rockafellar \cite{RTR:70,RTR:71}. Noticing that the zeros of $T$ are nothing but the saddle points of $L$, i.e.,
\begin{equation*}
	\zer T=S\times M,
\end{equation*}
and the Tikhonov regularization function $\varepsilon(t)$ is vanishing as $t\to+\infty$, we may expect that the solutions $(x(t),\lambda(t))$ of \eqref{sec1:arrhurtik} converge, as $t\to+\infty$, toward an element in $S\times M$.

As it turns out, the asymptotic behavior of the solutions of \eqref{sec1:arrhurtik} depends critically on the rate at which $\varepsilon(t)$ tends to zero as $t\to+\infty$. In the particular case when $\varepsilon(t)$ vanishes ``sufficiently fast'' as $t\to+\infty$, in the sense that
\begin{equation*}
	\int_{t_{0}}^{\infty}\varepsilon(\tau)\d\tau<+\infty,
\end{equation*}
the solutions of \eqref{sec1:arrhurtik} are known to inherit the asymptotic properties of the ones of the classical Arrow--Hurwicz differential system; see, e.g., the general results in Attouch et al.~\cite{HA-AC-MOC:18} (see also Cominetti et al.~\cite{RC-JP-SS:08}). As such, we may only expect the weak ergodic convergence of the solutions of $\eqref{sec1:arrhurtik}$ toward their asymptotic center in $S\times M$; see Niederl{\"a}nder \cite{SKN:21,SKN:23} for the corresponding results on the classical Arrow--Hurwicz evolution system.

On the other hand, if the Tikhonov regularization function $\varepsilon(t)$ vanishes ``slowly'' as $t\to+\infty$, in the sense that
\begin{equation*}
	\int_{t_{0}}^{\infty}\varepsilon(\tau)\d\tau=+\infty,
\end{equation*}
the solutions of \eqref{sec1:arrhurtik} are asymptotically dominated by the regularizing terms. In this case, the solutions of \eqref{sec1:arrhurtik} are known to be strongly convergent toward the element of least norm in $S\times M$, provided that $\varepsilon(t)$ satisfies, in addition, the ``finite-length property'' (see Cominetti et al.~\cite{RC-JP-SS:08})
\begin{equation*}
	\int_{t_{0}}^{\infty}\abs{\dot{\varepsilon}(\tau)}\d\tau<+\infty,
\end{equation*}
or the ``limiting condition'' (see Bo\c{t} and Nguyen \cite{RIB-DKN:24})
\begin{equation*}
	\lim_{t\to+\infty}\frac{\abs{\dot{\varepsilon}(t)}}{\varepsilon(t)}=0.
\end{equation*}
We note that the strong convergence has been previously established under the assumption that $\varepsilon(t)$ is decreasing with
\begin{equation*}
	\lim_{t\to+\infty}\frac{\dot{\varepsilon}(t)}{\varepsilon^{2}(t)}=0;
\end{equation*}
see, e.g., Israel Jr.~and Reich \cite{MMI-SR:81}, Attouch and Cominetti \cite{HA-RC:96} (see also Browder \cite{FEB:76} and Reich \cite{SR:77}). Ever since, the subject of combining first- and second-order dynamics with Tikhonov regularizing terms has gained significant attention. We only mention here the recent works of Battahi et al.~\cite{FB-ZC-HR:24,FB-ZC-HR:25}, Bo\c{t} and Nguyen \cite{RIB-DKN:24} in the context of first-order differential systems, and Attouch and Czarnecki \cite{HA-MOC:02}, Attouch et al.~\cite{HA-AB-ZC-HR:22,HA-ZC-HR:18} for earlier studies on second-order evolution systems.

In this work, we focus on the derivation of fast convergence rates for the solutions of the \eqref{sec1:arrhurtik} differential system. Under the assumption that $\varepsilon(t)$ is twice continuously differentiable such that there exists $t_{+}\geq t_{0}$ with
\begin{align*}
	\begin{rcases}
		\begin{aligned}
			\varepsilon^{2}(t)+\dot{\varepsilon}(t)&\geq0\\
			2\varepsilon(t)\dot{\varepsilon}(t)+\ddot{\varepsilon}(t)&\leq0
			\hspace{1pt}
		\end{aligned}
	\end{rcases}\quad\forall t\geq t_{+},
\end{align*}
we show that the solutions $(x(t),\lambda(t))$ of \eqref{sec1:arrhurtik} strongly converge, as $t\to+\infty$, to the element of least norm in $S\times M$, i.e.,
\begin{equation*}
	\lim_{t\to+\infty}(x(t),\lambda(t))=\proj_{S\times M}(0,0).
\end{equation*}
Moreover, we prove that the solutions $(x(t),\lambda(t))$ of \eqref{sec1:arrhurtik} obey, for every $(\bar{x},\bar{\lambda})\in S\times M$, the asymptotic estimates
\begin{align*}
	\norm{(\dot{x}(t),\dot{\lambda}(t))+\varepsilon(t)((x(t),\lambda(t))
		-(\bar{x},\bar{\lambda}))}^{2}&=
	\OO\big(\e^{-2\rho(t)}+\,\varepsilon^{2}(t)\big)\ 
	\text{as $t\to+\infty$};\\
	\varepsilon(t)\big(L(x(t),\bar{\lambda})-L(\bar{x},\lambda(t))\big)&=
	\OO\big(\e^{-2\rho(t)}+\,\varepsilon^{2}(t)\big)\
	\text{as $t\to+\infty$};\\
	\norm{T(x(t),\lambda(t))-T(\bar{x},\bar{\lambda})}^{2}&=
	\OO\big(\e^{-2\rho(t)}+\,\varepsilon^{2}(t)\big)\
	\text{as $t\to+\infty$};\\
	\norm{(\dot{x}(t),\dot{\lambda}(t))}^{2}&=
	\OO\big(\e^{-2\rho(t)}+\,\varepsilon^{2}(t)\big)\
	\text{as $t\to+\infty$},
\end{align*}
with the auxiliary function $\rho:[t_{0},+\infty[\ \to\RR$ being defined by
\begin{equation*}
	\rho(t)=\int_{t_{0}}^{t}\varepsilon(\tau)\d\tau.
\end{equation*}
The latter essentially recovers the decay rate estimate
\begin{equation*}
	\norm{(\dot{x}(t),\dot{\lambda}(t))}^{2}
	=\OO\big(\e^{-2\rho(t)}+\,\abs{\dot{\varepsilon}(t)}\big)\
	\text{as $t\to+\infty$}
\end{equation*}
recently obtained by Bo\c{t} and Nguyen \cite{RIB-DKN:24} in the context of general monotone operator flows with Tikhonov regularization. If, in addition, the Tikhonov regularization function $\varepsilon(t)$ is decreasing, we show that the following refined estimate holds:
\begin{equation*}
	\lim_{t\to+\infty}\e^{-\rho(t)}\int_{t_{+}}^{t}\e^{\rho(t)}
	\frac{1}{\varepsilon(\tau)}\norm{(\dot{x}(\tau),\dot{\lambda}(\tau))}^{2}
	\d\tau<+\infty.
\end{equation*}
In the particular case $\varepsilon(t)=1/t$ with $t_{0}>0$, the above estimate reduces to
\begin{equation*}
	\lim_{t\to+\infty}\frac{1}{t}\int_{t_{0}}^{t}\tau^{2}\norm{(\dot{x}(\tau),\dot{\lambda}(\tau))}^{2}
	\d\tau<+\infty,
\end{equation*}
which suggests a fast decay of the quantity $t\norm{(\dot{x}(t),\dot{\lambda}(t))}^{2}$ as $t\to+\infty$ in the sense of an ``exponentially weighted moving average''. Moreover, under the assumption that there exists $\alpha>0$ such that for every $(x,\lambda),(\xi,\eta)\in X\times Y$, it holds that
\begin{equation}
	\renewcommand{\theequation}{L}\tag{\theequation}\label{sec1:errorbound}
	\norm{T(x,\lambda)-T(\xi,\eta)}^{2}\geq
	\alpha\norm{(x,\lambda)-(\xi,\eta)}^{2},
\end{equation}
we infer that the solutions $(x(t),\lambda(t))$ of \eqref{sec1:arrhurtik} obey, for $(\bar{x},\bar{\lambda})\in S\times M$, the decay rate estimate
\begin{equation*}
	\norm{(x(t),\lambda(t))-(\bar{x},\bar{\lambda})}^{2}=
	\OO\big(\e^{-2\rho(t)}+\,\varepsilon^{2}(t)\big)\
	\text{as $t\to+\infty$}.
\end{equation*}

We conclude our work by deriving similar asymptotic estimates for the \eqref{sec1:arrhurtik} solutions with respect to the viscosity curve $(x_{t},\lambda_{t})$ which is governed by the unique zero of the $\varepsilon(t)$-strongly monotone operator $T_{t}$, viz., for every $t\geq t_{0}$,
\begin{equation*}
	T(x_{t},\lambda_{t})+\varepsilon(t)(x_{t},\lambda_{t})=(0,0).
\end{equation*}
In particular, we show that the solutions $(x(t),\lambda(t))$ of \eqref{sec1:arrhurtik} obey the estimates
\begin{align*}
	\norm{(\dot{x}(t),\dot{\lambda}(t))+\varepsilon(t)((x(t),\lambda(t))
		-(x_{t},\lambda_{t}))}^{2}&=
	\OO\big(\e^{-2\rho(t)}+\,\varepsilon^{2}(t)\big)\ \text{as $t\to+\infty$};\\
	\norm{T(x(t),\lambda(t))-T(x_{t},\lambda_{t})}^{2}&=
	\OO\big(\e^{-2\rho(t)}+\,\varepsilon^{2}(t)\big)\ \text{as $t\to+\infty$},
\end{align*}
relative to the viscosity curve $(x_{t},\lambda_{t})$. If, moreover, $T$ verifies condition \eqref{sec1:errorbound}, we have the following asymptotic estimate:
\begin{equation*}
	\norm{(x(t),\lambda(t))-(x_{t},\lambda_{t})}^{2}=
	\OO\big(\e^{-2\rho(t)}+\,\varepsilon^{2}(t)\big)\ \text{as $t\to+\infty$}.
\end{equation*}
Numerical experiments on a simple yet representative example illustrate the above theoretical findings.

%% PRELIMINARIES ON TIKHONOV REGULARIZATION
\section{Preliminaries on Tikhonov regularization}\label{sec2}
Let $X\times Y$ be endowed with the Hilbertian product structure $\ip{\,\cdot\,}{\,\cdot\,}=\ipX{\,\cdot\,}{\,\cdot\,}+\ipY{\,\cdot\,}{\,\cdot\,}$ and associated norm $\norm{\,\cdot\,}$. Consider now, for each $t\geq t_{0}$, the regularized Lagrangian
\begin{align*}
	L_{t}:X\times Y&\longrightarrow\RR\\[-1ex]
	(x,\lambda)&\longmapsto L(x,\lambda)
	+\frac{\varepsilon(t)}{2}\big(\normX{x}^{2}-\normY{\lambda}^{2}\big)
\end{align*}
relative to the convex minimization problem \eqref{sec1:cvxproblem}. Observing that $L_{t}$ is $\varepsilon(t)$-strongly convex-concave, it follows that $L_{t}$ admits, for each $t\geq t_{0}$, the unique saddle point $(x_{t},\lambda_{t})\in X\times Y$, i.e.,
\begin{equation*}
	L_{t}(x_{t},\lambda)\leq L_{t}(x_{t},\lambda_{t})\leq L_{t}(x,\lambda_{t})\quad\forall(x,\lambda)\in X\times Y.
\end{equation*}
Equivalently, the system of primal-dual optimality conditions reads
\begin{align*}
	\begin{cases}
		\begin{aligned}
			\nabla f(x_{t})+A^{\ast}\lambda_{t}+\varepsilon(t)x_{t}&=0\\[.75ex]
			Ax_{t}-b-\varepsilon(t)\lambda_{t}&=0.
		\end{aligned}
	\end{cases}
\end{align*}
In view of the latter, we immediately observe that, for each $t\geq t_{0}$, the unique zero of the $\varepsilon(t)$-strongly monotone operator
\begin{align*}
	T_{t}:X\times Y&\longrightarrow X\times Y\\
	(x,\lambda)&\longmapsto
	\big(\nabla_{x}L_{t}(x,\lambda),-\nabla_{\lambda}L_{t}(x,\lambda)\big),
\end{align*}
that is the ``generator'' of the \eqref{sec1:arrhurtik} differential system, is precisely the saddle point of $L_{t}$, that is,
\begin{align*}
	T_{t}(x_{t},\lambda_{t})=(0,0)&\quad\iff\\
	(x_{t},\lambda_{t})=\argminmax_{X\times Y}L_{t}&.
\end{align*}

Let us start our discussion with a preliminary result on the asymptotic behavior of the so-called viscosity curve $(x_{t},\lambda_{t})$ as $t\to+\infty$. The result is adapted from Bruck \cite[Lemma 1]{REB:74} (see also Attouch \cite{HA:96}, Attouch and Cominetti \cite{HA-RC:96}, Cominetti et al.~\cite[Lemma 4]{RC-JP-SS:08}).
\begin{lemma}\label{sec2:lm:vcgeo}
	Let $S\times M$ be non-empty and let $(x_{t},\lambda_{t})=\argminmax_{X\times Y}L_{t}$ for each $t\geq t_{0}$. Then the following assertions hold:
	\begin{enumerate}[(i)]
		\item $t\mapsto(x_{t},\lambda_{t})$ is bounded on $[t_{0},+\infty[$ and
			\begin{equation*}
				\norm{(x_{t},\lambda_{t})}\leq\norm{\proj_{S\times M}(0,0)}
				\quad\forall t\geq t_{0};
			\end{equation*}
		\item it holds that
			\begin{equation*}
				\lim_{t\to+\infty}(x_{t},\lambda_{t})=\proj_{S\times M}(0,0).
			\end{equation*}
	\end{enumerate}
\end{lemma}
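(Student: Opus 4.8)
The plan is to extract everything from the monotonicity of $T$ together with the defining identity $T(x_{t},\lambda_{t})=-\varepsilon(t)(x_{t},\lambda_{t})$, and, for the convergence part, to run the classical weak--strong compactness argument of Bruck. Throughout I abbreviate $z_{t}=(x_{t},\lambda_{t})$ and recall that $\zer T=S\times M$ is non-empty, closed and convex, so that $\proj_{S\times M}(0,0)$ is well defined.

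For assertion (i), fix $t\geq t_{0}$ and let $\bar{z}\in S\times M=\zer T$ be arbitrary. Monotonicity of $T$ gives $\ip{Tz_{t}-T\bar{z}}{z_{t}-\bar{z}}\geq0$; substituting $T\bar{z}=0$ and $Tz_{t}=-\varepsilon(t)z_{t}$ and dividing by $-\varepsilon(t)<0$ yields $\ip{z_{t}}{z_{t}-\bar{z}}\leq0$, hence $\norm{z_{t}}^{2}\leq\ip{z_{t}}{\bar{z}}\leq\norm{z_{t}}\norm{\bar{z}}$ and therefore $\norm{z_{t}}\leq\norm{\bar{z}}$. Taking the infimum over $\bar{z}\in S\times M$ gives $\norm{z_{t}}\leq\norm{\proj_{S\times M}(0,0)}$ for every $t\geq t_{0}$, which in particular proves boundedness of $t\mapsto z_{t}$ on $[t_{0},+\infty[$.

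For assertion (ii), I would argue subsequentially: since $(z_{t})$ is bounded it suffices to show that every sequence $z_{t_{n}}$ with $t_{n}\to+\infty$ has a subsequence converging strongly to $\proj_{S\times M}(0,0)$, for then the full net converges to this common limit. Pick a subsequence $z_{t_{n_{k}}}\rightharpoonup z_{\infty}$. Since $Tz_{t_{n_{k}}}=-\varepsilon(t_{n_{k}})z_{t_{n_{k}}}\to0$ strongly (boundedness of $(z_{t})$ and $\varepsilon(t)\to0$), the weak--strong sequential closedness of the graph of the maximally monotone operator $T$ forces $Tz_{\infty}=0$, i.e., $z_{\infty}\in\zer T=S\times M$. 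Inserting $\bar{z}=z_{\infty}$ into the inequality $\ip{z_{t_{n_{k}}}}{z_{t_{n_{k}}}-z_{\infty}}\leq0$ from part (i) gives $\norm{z_{t_{n_{k}}}}^{2}\leq\ip{z_{t_{n_{k}}}}{z_{\infty}}\to\norm{z_{\infty}}^{2}$, so $\limsup_{k}\norm{z_{t_{n_{k}}}}\leq\norm{z_{\infty}}$; combined with weak lower semicontinuity of the norm this forces $\norm{z_{t_{n_{k}}}}\to\norm{z_{\infty}}$ and hence strong convergence $z_{t_{n_{k}}}\to z_{\infty}$. Passing to the limit in $\ip{z_{t_{n_{k}}}}{z_{t_{n_{k}}}-\bar{z}}\leq0$ now yields $\ip{0-z_{\infty}}{\bar{z}-z_{\infty}}\leq0$ for all $\bar{z}\in S\times M$, which is exactly the variational inequality characterizing $z_{\infty}=\proj_{S\times M}(0,0)$. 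Since the limit does not depend on the chosen subsequence, $\lim_{t\to+\infty}z_{t}=\proj_{S\times M}(0,0)$.

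The only step that is not completely routine is the passage from $(z_{t_{n_{k}}},Tz_{t_{n_{k}}})\to(z_{\infty},0)$ --- weak in the first component, strong in the second --- to $Tz_{\infty}=0$: this relies on the maximal monotonicity of the saddle operator $T$ of the convex--concave Lagrangian $L$ (Rockafellar), not merely on its monotonicity. Once this is granted, the remainder is elementary manipulation of the monotonicity inequality and of weak versus strong limits.
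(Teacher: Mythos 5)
Your argument is correct, but it follows a genuinely different route from the paper's own proof --- in fact, it is precisely the operator-theoretic alternative that the paper sketches in the remark immediately following the lemma (citing Br{\'e}zis and Bauschke--Combettes for the weak--strong graph closedness of $T$). The paper instead works at the level of the regularized Lagrangian: for (i) it combines the saddle-point inequality $L_{t}(x_{t},\bar{\lambda})\leq L_{t}(\bar{x},\lambda_{t})$ with $L(x_{t},\bar{\lambda})-L(\bar{x},\lambda_{t})\geq0$ to get $\norm{(x_{t},\lambda_{t})}\leq\norm{(\bar{x},\bar{\lambda})}$, and for (ii) it identifies weak cluster points as saddle points of $L$ using only the weak lower semi-continuity of the convex functions $L(\,\cdot\,,\lambda)$ and $-L(x,\,\cdot\,)$, then pins down the limit via the minimal-norm property from (i) rather than via the variational inequality $\ip{-z_{\infty}}{\bar{z}-z_{\infty}}\leq0$ that you derive. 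Your version, based on the identity $T(x_{t},\lambda_{t})=-\varepsilon(t)(x_{t},\lambda_{t})$, plain monotonicity for (i), and maximal monotonicity (graph closedness in the weak--strong topology) for (ii), has the advantage of working verbatim for an arbitrary maximally monotone operator, independently of the Lagrangian structure; the paper's proof buys a more elementary and self-contained argument, since it never needs the closedness theorem for maximally monotone operators, only weak lower semi-continuity of convex lower semi-continuous functions. One small point in your favor: you correctly flag that the graph-closedness step is where maximality (and not mere monotonicity) enters; since $T$ here is continuous and monotone, maximality is indeed available, so there is no gap.
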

\begin{proof}
	{\it (i)} For each $t\geq t_{0}$, let $(x_{t},\lambda_{t})=\argminmax_{X\times Y}L_{t}$ and take $(\bar{x},\bar{\lambda})\in S\times M$. Using that $(x_{t},\lambda_{t})$ is the saddle point of $L_{t}$, we have
	\begin{equation}
		\begin{split}\label{sec2:ltineq}
			0&\geq L_{t}(x_{t},\bar{\lambda})-L_{t}(\bar{x},\lambda_{t})\\
			&=L(x_{t},\bar{\lambda})-L(\bar{x},\lambda_{t})+\frac{\varepsilon(t)}{2}
			\big(\norm{(x_{t},\lambda_{t})}^{2}
			-\norm{(\bar{x},\bar{\lambda})}^{2}\big).
		\end{split}
	\end{equation}
	On the other hand, $(\bar{x},\bar{\lambda})$ is a saddle point of $L$ so that
	\begin{equation*}
		L(x_{t},\bar{\lambda})-L(\bar{x},\lambda_{t})\geq0.
	\end{equation*}
	Combining the above inequalities and subsequently dividing by $\varepsilon(t)/2$ yields
	\begin{equation*}
		\norm{(\bar{x},\bar{\lambda})}^{2}\geq\norm{(x_{t},\lambda_{t})}^{2}.
	\end{equation*}
	The above inequality being true for every $(\bar{x},\bar{\lambda})\in S\times M$, we arrive at the desired estimate.

	{\it (ii)} Let $(x,\lambda)\in X\times Y$ and let $(\bar{x},\bar{\lambda})\in X\times Y$ be a weak sequential cluster point of $(x_{t},\lambda_{t})_{t\geq t_{0}}$, that is, there exists a sequence $t_{n}\to+\infty$ such that $(x_{t_{n}},\lambda_{t_{n}})\rightharpoonup(\bar{x},\bar{\lambda})$ weakly in $X\times Y$ as $n\to+\infty$. Substituting $t$ by $t_{n}$ in inequality \eqref{sec2:ltineq} yields
	\begin{align*}
		\frac{\varepsilon(t_{n})}{2}\norm{(x,\lambda)}^{2}&\geq
		L(x_{t_{n}},\lambda)-L(x,\lambda_{t_{n}})
		+\frac{\varepsilon(t_{n})}{2}\norm{(x_{t_{n}},\lambda_{t_{n}})}^{2}\\
		&\geq L(x_{t_{n}},\lambda)-L(x,\lambda_{t_{n}}).
	\end{align*}
	Observing that $\varepsilon(t_{n})\to0$ as $n\to+\infty$, we obtain
	\begin{align*}
		0&\geq\limsup_{n\to+\infty}
		\big(L(x_{t_{n}},\lambda)-L(x,\lambda_{t_{n}})\big)\\
		&\geq\liminf_{n\to+\infty}\,L(x_{t_{n}},\lambda)
		+\liminf_{n\to+\infty}\,(-L(x,\lambda_{t_{n}}))\\
		&\geq L(\bar{x},\lambda)-L(x,\bar{\lambda})
	\end{align*}
	thanks to the weak lower semi-continuity of $L(\,\cdot\,,\lambda)$ and $-L(x,\,\cdot\,)$, as $L(\,\cdot\,,\lambda)$ and $-L(x,\,\cdot\,)$ are both convex and lower semi-continuous. The above inequalities being true for every $(x,\lambda)\in X\times Y$, we conclude that $(\bar{x},\bar{\lambda})$ is a saddle point of $L$, that is, $(\bar{x},\bar{\lambda})\in S\times M$.

	On the other hand, using {\it (i)} and owing to the weak lower semi-continuity of the norm $\norm{\,\cdot\,}$, we obtain
	\begin{equation*}
		\norm{\proj_{S\times M}(0,0)}
		\geq\liminf_{n\to+\infty}\,\norm{(x_{t_{n}},\lambda_{t_{n}})}
		\geq\norm{(\bar{x},\bar{\lambda})},
	\end{equation*}
	implying that $(\bar{x},\bar{\lambda})=\proj_{S\times M}(0,0)$. Consequently, $\proj_{S\times M}(0,0)$ is the only possible weak sequential cluster point of $(x_{t},\lambda_{t})_{t\geq t_{0}}$ so that $(x_{t},\lambda_{t})\rightharpoonup\proj_{S\times M}(0,0)$ weakly in $X\times Y$ as $t\to+\infty$. Upon relying on {\it (i)} again, we have
	\begin{align*}
		\norm{\proj_{S\times M}(0,0)}&
		\geq\limsup_{t\to+\infty}\,\norm{(x_{t},\lambda_{t})}\\
		&\geq\liminf_{t\to+\infty}\,\norm{(x_{t},\lambda_{t})}
		\geq\norm{\proj_{S\times M}(0,0)}
	\end{align*}
	and thus,
	\begin{equation*}
		\lim_{t\to+\infty}\norm{(x_{t},\lambda_{t})}=\norm{\proj_{S\times M}(0,0)}.
	\end{equation*}
	Now, as we have both $(x_{t},\lambda_{t})\rightharpoonup\proj_{S\times M}(0,0)$ weakly in $X\times Y$ and $\norm{(x_{t},\lambda_{t})}\to\norm{\proj_{S\times M}(0,0)}$ as $t\to+\infty$, we classically deduce
	\begin{equation*}
		\lim_{t\to+\infty}(x_{t},\lambda_{t})=\proj_{S\times M}(0,0),
	\end{equation*}
	concluding the result.
\end{proof}

\begin{remark}
	In view of the above result, we readily observe that $(x_{t},\lambda_{t})$ obeys, for every $(\bar{x},\bar{\lambda})\in S\times M$, the asymptotic estimate
	\begin{equation*}
		L(x_{t},\bar{\lambda})-L(\bar{x},\lambda_{t})=\OO(\varepsilon(t))\
		\text{as $t\to+\infty$}.
	\end{equation*}
	We show in Section \ref{sec4} that a comparable estimate holds with respect to the solutions of the \eqref{sec1:arrhurtik} differential system.
\end{remark}

\begin{remark}
	We note that the strong convergence of $(x_{t},\lambda_{t})$ toward $\proj_{S\times M}(0,0)$ as $t\to+\infty$ may also be deduced from the perturbed operator
	\begin{equation*}
		T_{t}=T+\varepsilon(t)\id
	\end{equation*}
	by using the graph-closedness property of the maximally monotone operator $T$ with respect to the weak-strong topology; see, e.g., Br{\'e}zis \cite[Theorem 2.2]{HB:73}, Bauschke and Combettes \cite[Theorem 23.44]{HHB-PLC:17}.
\end{remark}

The following result, adapted from Attouch \cite[Proposition 5.3]{HA:96} (see also Attouch and Cominetti \cite{HA-RC:96}, Torralba \cite[Lemma 5.2]{DT:96}, Attouch et al.~\cite[Lemma 2]{HA-AB-ZC-HR:22}), provides some differential information on the viscosity curve $(x_{t},\lambda_{t})$.
\begin{lemma}\label{sec2:lm:vcdiff}
	Let $(x_{t},\lambda_{t})=\argminmax_{X\times Y}L_{t}$ for each $t\geq t_{0}$. Then $t\mapsto(x_{t},\lambda_{t})$ is Lipschitz continuous on the compact intervals of $[t_{0},+\infty[$ and
	\begin{equation*}
		-\,\dot{\varepsilon}(t)
		\ip{(x_{t},\lambda_{t})}{(\dot{x}_{t},\dot{\lambda}_{t})}\geq
		\varepsilon(t)\norm{(\dot{x}_{t},\dot{\lambda}_{t})}^{2}
		\quad\text{a.e. $t\geq t_{0}$}.
	\end{equation*}
\end{lemma}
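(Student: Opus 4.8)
The plan is to exploit that, for each $t\geq t_{0}$, the point $(x_{t},\lambda_{t})$ is the unique zero of the $\varepsilon(t)$-strongly monotone operator $T_{t}=T+\varepsilon(t)\id$, i.e., $T(x_{t},\lambda_{t})+\varepsilon(t)(x_{t},\lambda_{t})=(0,0)$, and to compare this identity at two different instants. Fixing $s,t\geq t_{0}$ and subtracting the two defining identities, one obtains
\[
	T(x_{t},\lambda_{t})-T(x_{s},\lambda_{s})+\varepsilon(t)\big((x_{t},\lambda_{t})-(x_{s},\lambda_{s})\big)+\big(\varepsilon(t)-\varepsilon(s)\big)(x_{s},\lambda_{s})=(0,0).
\]
Taking the inner product with $(x_{t},\lambda_{t})-(x_{s},\lambda_{s})$ and using the monotonicity of $T$ to bound the contribution of the first difference below by zero yields
\[
	\varepsilon(t)\,\norm{(x_{t},\lambda_{t})-(x_{s},\lambda_{s})}^{2}\leq-\big(\varepsilon(t)-\varepsilon(s)\big)\ip{(x_{s},\lambda_{s})}{(x_{t},\lambda_{t})-(x_{s},\lambda_{s})}.
\]

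From this, by Cauchy--Schwarz and the uniform bound $\norm{(x_{s},\lambda_{s})}\leq\norm{\proj_{S\times M}(0,0)}$ provided by Lemma~\ref{sec2:lm:vcgeo}~{\it (i)}, I would deduce $\varepsilon(t)\,\norm{(x_{t},\lambda_{t})-(x_{s},\lambda_{s})}\leq\norm{\proj_{S\times M}(0,0)}\,\abs{\varepsilon(t)-\varepsilon(s)}$. On any compact interval $[a,b]\subset[t_{0},+\infty[$, the function $\varepsilon$ is bounded below by some $m>0$ (being positive and continuous) and Lipschitz with some constant $\ell$ (being continuously differentiable), whence $\norm{(x_{t},\lambda_{t})-(x_{s},\lambda_{s})}\leq(\ell/m)\norm{\proj_{S\times M}(0,0)}\,\abs{t-s}$ for all $s,t\in[a,b]$; this proves the Lipschitz continuity on compact intervals. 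Since $X\times Y$ is a Hilbert space, hence reflexive with the Radon--Nikod\'ym property, the curve $t\mapsto(x_{t},\lambda_{t})$ is then differentiable at almost every $t\geq t_{0}$.

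To obtain the differential inequality, I would return to the second display above, divide both sides by $(t-s)^{2}>0$, and let $s\to t$ at a point $t$ where the curve is differentiable. The left-hand side converges to $\varepsilon(t)\norm{(\dot{x}_{t},\dot{\lambda}_{t})}^{2}$; on the right-hand side, $(\varepsilon(t)-\varepsilon(s))/(t-s)\to\dot{\varepsilon}(t)$, while $\ip{(x_{s},\lambda_{s})}{(x_{t},\lambda_{t})-(x_{s},\lambda_{s})}/(t-s)\to\ip{(x_{t},\lambda_{t})}{(\dot{x}_{t},\dot{\lambda}_{t})}$ thanks to the continuity of $s\mapsto(x_{s},\lambda_{s})$ together with differentiability at $t$. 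This gives precisely $\varepsilon(t)\norm{(\dot{x}_{t},\dot{\lambda}_{t})}^{2}\leq-\dot{\varepsilon}(t)\ip{(x_{t},\lambda_{t})}{(\dot{x}_{t},\dot{\lambda}_{t})}$ for a.e.\ $t\geq t_{0}$, as claimed. The main obstacle is this limiting step: one must ensure the difference quotients actually converge, which is guaranteed exactly at points of differentiability, and justify the a.e.\ differentiability of a Lipschitz curve with values in a Hilbert space via the Radon--Nikod\'ym property of reflexive spaces rather than classical Rademacher; the monotone-operator manipulation and the use of the uniform boundedness of the viscosity curve from Lemma~\ref{sec2:lm:vcgeo} are otherwise routine.
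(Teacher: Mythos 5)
Your argument is correct and follows essentially the same route as the paper: the two-point inequality you derive from the identity $T(x_{t},\lambda_{t})+\varepsilon(t)(x_{t},\lambda_{t})=(0,0)$ and the monotonicity of $T$ is exactly the paper's inequality \eqref{sec2:diffineq} (there obtained from the $\varepsilon(t)$- and $\varepsilon(s)$-strong convexity-concavity of $L_{t}$ and $L_{s}$ at their saddle points), and the subsequent Lipschitz and difference-quotient limiting steps coincide. One minor remark: the lemma as stated does not assume $S\times M\neq\emptyset$, so rather than invoking the global bound of Lemma~\ref{sec2:lm:vcgeo}\,(i) you can obtain the local boundedness of $t\mapsto(x_{t},\lambda_{t})$ needed for the Lipschitz estimate directly from your own inequality with $s$ anchored at a fixed point of the compact interval, which keeps the proof at the stated level of generality.
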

\begin{proof}
	Let $(x_{t},\lambda_{t})=\argminmax_{X\times Y}L_{t}$ and $(x_{s},\lambda_{s})=\argminmax_{X\times Y}L_{s}$ for~some $t>s\geq t_{0}$. Utilizing that $L_{t}$ is $\varepsilon(t)$-strongly convex-concave, we have
	\begin{align*}
		0&\geq L_{t}(x_{t},\lambda_{s})-L_{t}(x_{s},\lambda_{t})
		+\frac{\varepsilon(t)}{2}
		\norm{(x_{t},\lambda_{t})-(x_{s},\lambda_{s})}^{2}\\
		&=L(x_{t},\lambda_{s})-L(x_{s},\lambda_{t})+\varepsilon(t)
		\ip{(x_{t},\lambda_{t})}{(x_{t},\lambda_{t})-(x_{s},\lambda_{s})}.
	\end{align*}
	Similarly, $L_{s}$ is $\varepsilon(s)$-strongly convex-concave so that
	\begin{equation*}
		0\geq L(x_{s},\lambda_{t})-L(x_{t},\lambda_{s})+\varepsilon(s)
		\ip{(x_{s},\lambda_{s})}{(x_{s},\lambda_{s})-(x_{t},\lambda_{t})}.
	\end{equation*}
	Combining the above inequalities gives
	\begin{equation*}
		0\geq\ip{\varepsilon(t)(x_{t},\lambda_{t})
		-\varepsilon(s)(x_{s},\lambda_{s})}
		{(x_{t},\lambda_{t})-(x_{s},\lambda_{s})}.
	\end{equation*}
	Equivalently, we have
	\begin{equation}
		\begin{gathered}\label{sec2:diffineq}
			0\geq(\varepsilon(t)-\varepsilon(s))\ip{(x_{t},\lambda_{t})}
			{(x_{t},\lambda_{t})-(x_{s},\lambda_{s})}\\
			+\,\varepsilon(s)\norm{(x_{t},\lambda_{t})-(x_{s},\lambda_{s})}^{2}.
		\end{gathered}
	\end{equation}
	Since $\varepsilon(t)$ is continuously differentiable, it is Lipschitz continuous on the compact intervals of $[t_{0},+\infty[$. In view of the above inequality, it readily follows that $(x_{t},\lambda_{t})$ is Lipschitz continuous on the compact intervals of $[t_{0},+\infty[$ as well and thus, differentiable almost everywhere. Upon dividing inequality \eqref{sec2:diffineq} by $(t-s)^{2}$ and letting $s\to t$, for almost every $t\geq t_{0}$, we obtain
	\begin{equation*}
		0\geq\dot{\varepsilon}(t)\ip{(x_{t},\lambda_{t})}
		{(\dot{x}_{t},\dot{\lambda}_{t})}
		+\varepsilon(t)\norm{(\dot{x}_{t},\dot{\lambda}_{t})}^{2},
	\end{equation*}
	concluding the desired inequality.
\end{proof}

\begin{remark}
	In view of the Cauchy--Schwarz inequality, we readily deduce from the above result that $(\dot{x}_{t},\dot{\lambda}_{t})$ obeys the asymptotic estimate
	\begin{equation*}
		\norm{(\dot{x}_{t},\dot{\lambda}_{t})}=
		\OO\Big(\frac{\abs{\dot{\varepsilon}(t)}}{\varepsilon(t)}\Big)\
		\text{as $t\to+\infty$}.
	\end{equation*}
\end{remark}

Let us next investigate the viscosity curve under the additional assumption that $\varepsilon(t)$ is twice continuously differentiable such that
\begin{equation*}
	\lim_{t\to+\infty}\varepsilon(t)=0.
\end{equation*}
The following result provides a fast decay rate estimate on the quantity $\varepsilon(t)\norm{(\dot{x}_{t},\dot{\lambda}_{t})}^{2}$ in the sense of an ``exponentially weighted moving average''.
\begin{lemma}
	Let $S\times M$ be non-empty and let $(x_{t},\lambda_{t})=\argminmax_{X\times Y}L_{t}$ for each $t\geq t_{0}$. Suppose that there exists $t_{+}\geq t_{0}$ such that
	\begin{align*}
		\begin{rcases}
			\begin{aligned}
				\varepsilon^{2}(t)+\dot{\varepsilon}(t)&\geq0\\
				2\varepsilon(t)\dot{\varepsilon}(t)+\ddot{\varepsilon}(t)&\leq0
				\hspace{1pt}
			\end{aligned}
		\end{rcases}\quad\forall t\geq t_{+}.
	\end{align*}
	Then, as $t\to+\infty$, it holds that
	\begin{equation*}
		\e^{-2\rho(t)}\int_{t_{+}}^{t}\e^{2\rho(\tau)}\varepsilon(\tau)
		\norm{(\dot{x}_{\tau},\dot{\lambda}_{\tau})}^{2}\d\tau=
		\OO\big(\e^{-2\rho(t)}+\,\varepsilon^{2}(t)\big).
	\end{equation*}
\end{lemma}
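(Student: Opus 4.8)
The plan is to convert the differential inequality of Lemma~\ref{sec2:lm:vcdiff} into an integral bound by a single integration by parts, with the two structural hypotheses on $\varepsilon$ each entering exactly once. Throughout, abbreviate $v(\tau):=(x_{\tau},\lambda_{\tau})$ and $g(\tau):=\norm{v(\tau)}^{2}$. By Lemma~\ref{sec2:lm:vcdiff}, $v$ is locally Lipschitz, hence $g$ is absolutely continuous with $\dot g=2\ip{v}{\dot v}$ a.e.; moreover assertion (i) of Lemma~\ref{sec2:lm:vcgeo} provides the uniform bound $g(\tau)\leq\norm{\proj_{S\times M}(0,0)}^{2}=:c^{2}$ for all $\tau\geq t_{0}$. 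Rewriting the conclusion of Lemma~\ref{sec2:lm:vcdiff} as $\varepsilon(\tau)\norm{\dot v(\tau)}^{2}\leq-\tfrac12\dot\varepsilon(\tau)\dot g(\tau)$ a.e., multiplying by $\e^{2\rho(\tau)}>0$ and integrating over $[t_{+},t]$, the claim reduces to bounding the quantity
\begin{equation*}
	-\frac{1}{2}\e^{-2\rho(t)}\int_{t_{+}}^{t}\e^{2\rho(\tau)}\dot\varepsilon(\tau)\dot g(\tau)\,\d\tau
\end{equation*}
from above by a term that is $\OO\bigl(\e^{-2\rho(t)}+\varepsilon^{2}(t)\bigr)$; since the quantity we are after is nonnegative, such an upper bound suffices.

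The next step is the integration by parts. Since $\varepsilon$ is twice continuously differentiable, $\rho$ continuously differentiable with $\dot\rho=\varepsilon$, and $g$ absolutely continuous, one has $\tfrac{d}{d\tau}\bigl(\e^{2\rho(\tau)}\dot\varepsilon(\tau)\bigr)=\e^{2\rho(\tau)}\bigl(2\varepsilon(\tau)\dot\varepsilon(\tau)+\ddot\varepsilon(\tau)\bigr)$, so integrating $\e^{2\rho}\dot\varepsilon$ against $\dot g$ yields
\begin{multline*}
	-\frac{1}{2}\int_{t_{+}}^{t}\e^{2\rho}\dot\varepsilon\,\dot g\,\d\tau
	=-\frac{1}{2}\e^{2\rho(t)}\dot\varepsilon(t)g(t)+\frac{1}{2}\e^{2\rho(t_{+})}\dot\varepsilon(t_{+})g(t_{+})\\
	+\frac{1}{2}\int_{t_{+}}^{t}\e^{2\rho(\tau)}\bigl(2\varepsilon(\tau)\dot\varepsilon(\tau)+\ddot\varepsilon(\tau)\bigr)g(\tau)\,\d\tau.
\end{multline*}
Here is where the second hypothesis $2\varepsilon\dot\varepsilon+\ddot\varepsilon\leq0$ on $[t_{+},+\infty[$ enters: together with $g\geq0$ and $\e^{2\rho}>0$ it renders the last integral nonpositive, so it may be dropped, while the $t_{+}$-boundary term is a fixed constant $C:=\tfrac12\e^{2\rho(t_{+})}\dot\varepsilon(t_{+})g(t_{+})$.

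It then remains to control the boundary term at $t$. After multiplying the resulting inequality by $\e^{-2\rho(t)}>0$ we obtain
\begin{equation*}
	\e^{-2\rho(t)}\int_{t_{+}}^{t}\e^{2\rho(\tau)}\varepsilon(\tau)\norm{\dot v(\tau)}^{2}\,\d\tau\leq-\frac{1}{2}\dot\varepsilon(t)g(t)+C\e^{-2\rho(t)},
\end{equation*}
and the first hypothesis $\varepsilon^{2}(t)+\dot\varepsilon(t)\geq0$, i.e.\ $-\dot\varepsilon(t)\leq\varepsilon^{2}(t)$, multiplied by $\tfrac12 g(t)\geq0$ and combined with $g(t)\leq c^{2}$, gives $-\tfrac12\dot\varepsilon(t)g(t)\leq\tfrac{c^{2}}{2}\varepsilon^{2}(t)$. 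Hence the right-hand side is $\OO(\varepsilon^{2}(t))+\OO(\e^{-2\rho(t)})$, and nonnegativity of the left-hand side yields the assertion. I do not anticipate a genuine obstacle: the only subtleties are the justification of the integration by parts (the absolute continuity of $g$ it requires is already supplied by Lemma~\ref{sec2:lm:vcdiff}) and the sign bookkeeping that makes the two structural hypotheses slot in exactly as above; everything else is routine.
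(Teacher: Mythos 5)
Your proof is correct, and at its core it performs the same energy estimate as the paper: the weight $\e^{2\rho}$, the use of $2\varepsilon\dot{\varepsilon}+\ddot{\varepsilon}\leq0$ to discard an integral term over $[t_{+},t]$, and the use of $\varepsilon^{2}+\dot{\varepsilon}\geq0$ to absorb the boundary term at $t$ all enter in exactly the same roles. Two genuine points of difference are worth recording. First, you take the differential inequality of Lemma \ref{sec2:lm:vcdiff} as the starting point and integrate by parts against the $C^{1}$ weight $\e^{2\rho}\dot{\varepsilon}$, whereas the paper differentiates the optimality system $T(x_{t},\lambda_{t})+\varepsilon(t)(x_{t},\lambda_{t})=(0,0)$ and then drops the term $\langle\frac{\d}{\d t}T(x_{t},\lambda_{t}),(\dot{x}_{t},\dot{\lambda}_{t})\rangle\geq0$ by monotonicity; the paper's route relies on the footnoted standing assumption that $t\mapsto T(x_{t},\lambda_{t})$ is locally Lipschitz (hence differentiable a.e.), which your argument bypasses entirely, since Lemma \ref{sec2:lm:vcdiff} was established without it. Second, to control the remaining term $-\tfrac{1}{2}\dot{\varepsilon}(t)\norm{(x_{t},\lambda_{t})}^{2}$ you combine $-\dot{\varepsilon}(t)\leq\varepsilon^{2}(t)$ with the uniform bound $\norm{(x_{t},\lambda_{t})}\leq\norm{\proj_{S\times M}(0,0)}$ of Lemma \ref{sec2:lm:vcgeo}(i), while the paper instead adds $\varepsilon(t)$ times inequality \eqref{sec2:ltineq} and uses the saddle-point property of $(\bar{x},\bar{\lambda})$ together with $\sigma(t)\geq0$; the two devices produce the same bound, yours with constant $\norm{\proj_{S\times M}(0,0)}^{2}$ and the paper's with $\norm{(\bar{x},\bar{\lambda})}^{2}$. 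In short, your proof is a slightly more economical and self-contained variant of the paper's argument, with no gap.
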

\begin{proof}
	Let $(x_{t},\lambda_{t})=\argminmax_{X\times Y}L_{t}$ and take $(\bar{x},\bar{\lambda})\in S\times M$. Let $\sigma:[t_{0},+\infty[\ \to\RR$ be defined by $\sigma(t)=\varepsilon^{2}(t)+\dot{\varepsilon}(t)$ such that $\dot{\sigma}(t)=2\varepsilon(t)\dot{\varepsilon}(t)+\ddot{\varepsilon}(t)$. In view of the system of primal-dual optimality conditions, for almost every $t\geq t_{0}$, we have\footnote{%
		In the following, we assume that $t\mapsto T(x_{t},\lambda_{t})$ is Lipschitz continuous on the compact intervals of $[t_{0},+\infty[$, implying that it is differentiable almost everywhere. In Section \ref{sec3}, we provide conditions on $T$ which justify this assumption.
	}
	\begin{equation*}
		\Big\langle\frac{\d}{\d t}T(x_{t},\lambda_{t}),
		(\dot{x}_{t},\dot{\lambda}_{t})\Big\rangle
		+\varepsilon(t)\norm{(\dot{x}_{t},\dot{\lambda}_{t})}^{2}
		+\frac{\dot{\varepsilon}(t)}{2}\frac{\d}{\d t}
		\norm{(x_{t},\lambda_{t})}^{2}=0.
	\end{equation*}
	Since $\varepsilon(t)$ is twice continuously differentiable, we readily obtain
	\begin{gather*}
		\frac{1}{2}\frac{\d}{\d t}
		\big(\dot{\varepsilon}(t)\norm{(x_{t},\lambda_{t})}^{2}\big)
		+\varepsilon(t)\big(\dot{\varepsilon}(t)\norm{(x_{t},\lambda_{t})}^{2}\big)
		+\varepsilon(t)\norm{(\dot{x}_{t},\dot{\lambda}_{t})}^{2}\\
		+\,\Big\langle\frac{\d}{\d t}T(x_{t},\lambda_{t}),
		(\dot{x}_{t},\dot{\lambda}_{t})\Big\rangle
		-\frac{\dot{\sigma}(t)}{2}\norm{(x_{t},\lambda_{t})}^{2}=0.
	\end{gather*}
	Multiplying by $\e^{2\rho(t)}$ and taking into account that $T$ is monotone gives
	\begin{gather*}
		\frac{1}{2}\frac{\d}{\d t}
		\big(\e^{2\rho(t)}\dot{\varepsilon}(t)\norm{(x_{t},\lambda_{t})}^{2}\big)
		+\e^{2\rho(t)}\varepsilon(t)\norm{(\dot{x}_{t},\dot{\lambda}_{t})}^{2}\\
		-\,\e^{2\rho(t)}\frac{\dot{\sigma}(t)}{2}\norm{(x_{t},\lambda_{t})}^{2}
		\leq0.
	\end{gather*}
	Integrating over $[t_{+},t]$ and observing that $\dot{\sigma}(t)\leq0$ for all $t\geq t_{+}$, we find that there exists $K\geq0$ such that
	\begin{equation*}
		\frac{\dot{\varepsilon}(t)}{2}\norm{(x_{t},\lambda_{t})}^{2}
		+\e^{-2\rho(t)}\int_{t_{+}}^{t}\e^{2\rho(\tau)}\varepsilon(\tau)
		\norm{(\dot{x}_{\tau},\dot{\lambda}_{\tau})}^{2}\d\tau\leq K\e^{-2\rho(t)}.
	\end{equation*}

	On the other hand, multiplying inequality \eqref{sec2:ltineq} by $\varepsilon(t)$ and subsequently adding it to the above inequality yields
	\begin{gather*}
		\varepsilon(t)\big(L(x_{t},\bar{\lambda})-L(\bar{x},\lambda_{t})\big)
		+\frac{\sigma(t)}{2}\norm{(x_{t},\lambda_{t})}^{2}
		-\frac{\varepsilon^{2}(t)}{2}\norm{(\bar{x},\bar{\lambda})}^{2}\\
		+\,\e^{-2\rho(t)}\int_{t_{+}}^{t}\e^{2\rho(\tau)}\varepsilon(\tau)
		\norm{(\dot{x}_{\tau},\dot{\lambda}_{\tau})}^{2}\d\tau\leq K\e^{-2\rho(t)}.
	\end{gather*}
	Noticing that $(\bar{x},\bar{\lambda})$ is a saddle point of $L$ and using the fact that $\sigma(t)\geq0$ for all $t\geq t_{+}$, we obtain
	\begin{equation*}
		\e^{-2\rho(t)}\int_{t_{+}}^{t}\e^{2\rho(\tau)}\varepsilon(\tau)
		\norm{(\dot{x}_{\tau},\dot{\lambda}_{\tau})}^{2}\d\tau
		\leq K\e^{-2\rho(t)}+\,\frac{\varepsilon^{2}(t)}{2}
		\norm{(\bar{x},\bar{\lambda})}^{2},
	\end{equation*}
	concluding the desired estimate.
\end{proof}

Let us conclude this section with asymptotic decay rate estimates on the viscosity curve $(x_{t},\lambda_{t})$ and its derivative given the additional assumption that there exists $\alpha>0$ such that for every $(x,\lambda),(\xi,\eta)\in X\times Y$, it holds that
\begin{equation}
	\renewcommand{\theequation}{L}\tag{\theequation}
	\norm{T(x,\lambda)-T(\xi,\eta)}^{2}
	\geq\alpha\norm{(x,\lambda)-(\xi,\eta)}^{2}.
\end{equation}
Condition \eqref{sec1:errorbound} may be interpreted as a particular instance of an error-bound or strong metric subregularity condition (see, e.g., Artacho and Geoffroy \cite{FJAA-MHG:08}, Bolte et al.~\cite{JB-TPN-JP-BWS:17}) which clearly implies that $T$ admits at most one zero.
\begin{lemma}\label{sec2:lm:vccondl}
	Let $S\times M$ be non-empty, let $(x_{t},\lambda_{t})=\argminmax_{X\times Y}L_{t}$ for each $t\geq t_{0}$, and suppose that $T:X\times Y\to X\times Y$ satisfies condition \eqref{sec1:errorbound} with $\alpha>0$. Then, for $(\bar{x},\bar{\lambda})\in S\times M$, it holds that
	\begin{align*}
		\norm{(x_{t},\lambda_{t})}^{2}&=
		\OO\Big(\frac{1}{\alpha+\varepsilon^{2}(t)}\Big)\ \text{as $t\to+\infty$};\\
		\norm{(x_{t},\lambda_{t})-(\bar{x},\bar{\lambda})}^{2}&=
		\OO\Big(\frac{\varepsilon^{2}(t)}{\alpha+\varepsilon^{2}(t)}\Big)\
		\text{as $t\to+\infty$};\\
		\norm{(\dot{x}_{t},\dot{\lambda}_{t})}^{2}&=
		\OO\bigg(\frac{\abs{\dot{\varepsilon}(t)}^{2}}
		{\big(\alpha+\varepsilon^{2}(t)\big)^{2}}\bigg)\ \text{as $t\to+\infty$}.
	\end{align*}
\end{lemma}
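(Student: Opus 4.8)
The plan is to reduce everything to the single identity that ties the viscosity curve to condition \eqref{sec1:errorbound}: since $(x_{t},\lambda_{t})$ is the zero of $T_{t}=T+\varepsilon(t)\id$ while every $(\bar{x},\bar{\lambda})\in S\times M$ is a zero of $T$, the system of primal--dual optimality conditions gives $T(x_{t},\lambda_{t})-T(\bar{x},\bar{\lambda})=-\varepsilon(t)(x_{t},\lambda_{t})$. Feeding this into the monotonicity of $T$ and into \eqref{sec1:errorbound} yields the first two estimates; differentiating it and passing \eqref{sec1:errorbound} to the limit along the curve yields the third.

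First I would treat the first two estimates jointly. Monotonicity of $T$ applied to the pair $(x_{t},\lambda_{t})$, $(\bar{x},\bar{\lambda})$ gives $\ip{-\varepsilon(t)(x_{t},\lambda_{t})}{(x_{t},\lambda_{t})-(\bar{x},\bar{\lambda})}\geq0$; since $\varepsilon(t)>0$, dividing by $-\varepsilon(t)$ and expanding the inner product by polarization produces the basic inequality $\norm{(x_{t},\lambda_{t})}^{2}+\norm{(x_{t},\lambda_{t})-(\bar{x},\bar{\lambda})}^{2}\leq\norm{(\bar{x},\bar{\lambda})}^{2}$. Taking $(\bar{x},\bar{\lambda})=\proj_{S\times M}(0,0)$ recovers the boundedness $\norm{(x_{t},\lambda_{t})}\leq\norm{\proj_{S\times M}(0,0)}$ of Lemma \ref{sec2:lm:vcgeo}(i); since $\varepsilon(t)\to0$, the map $t\mapsto\alpha+\varepsilon^{2}(t)$ is bounded on $[t_{0},+\infty[$, and the first estimate follows at once. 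For the second, \eqref{sec1:errorbound} gives $\alpha\norm{(x_{t},\lambda_{t})-(\bar{x},\bar{\lambda})}^{2}\leq\norm{T(x_{t},\lambda_{t})}^{2}=\varepsilon^{2}(t)\norm{(x_{t},\lambda_{t})}^{2}$; substituting $\norm{(x_{t},\lambda_{t})}^{2}\leq\norm{(\bar{x},\bar{\lambda})}^{2}-\norm{(x_{t},\lambda_{t})-(\bar{x},\bar{\lambda})}^{2}$ from the basic inequality and collecting terms yields exactly $(\alpha+\varepsilon^{2}(t))\norm{(x_{t},\lambda_{t})-(\bar{x},\bar{\lambda})}^{2}\leq\varepsilon^{2}(t)\norm{(\bar{x},\bar{\lambda})}^{2}$, i.e.\ the second estimate.

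For the third estimate I would differentiate the optimality condition $T(x_{t},\lambda_{t})=-\varepsilon(t)(x_{t},\lambda_{t})$. By Lemma \ref{sec2:lm:vcdiff} the right-hand side is Lipschitz continuous on the compact intervals of $[t_{0},+\infty[$, hence so is $t\mapsto T(x_{t},\lambda_{t})$, and a.e.\ one has $\frac{\d}{\d t}T(x_{t},\lambda_{t})=-\dot{\varepsilon}(t)(x_{t},\lambda_{t})-\varepsilon(t)(\dot{x}_{t},\dot{\lambda}_{t})$. Next I would pass \eqref{sec1:errorbound} to the limit along the curve: dividing $\norm{T(x_{t},\lambda_{t})-T(x_{s},\lambda_{s})}\geq\sqrt{\alpha}\,\norm{(x_{t},\lambda_{t})-(x_{s},\lambda_{s})}$ by $\abs{t-s}$ and letting $s\to t$ (at a point where both curves are differentiable, i.e.\ a.e.), continuity of the norm gives $\norm{\frac{\d}{\d t}T(x_{t},\lambda_{t})}\geq\sqrt{\alpha}\,\norm{(\dot{x}_{t},\dot{\lambda}_{t})}$. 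Combining the two facts with the triangle inequality, $\sqrt{\alpha}\,\norm{(\dot{x}_{t},\dot{\lambda}_{t})}\leq\abs{\dot{\varepsilon}(t)}\norm{(x_{t},\lambda_{t})}+\varepsilon(t)\norm{(\dot{x}_{t},\dot{\lambda}_{t})}$; since $\varepsilon(t)\to0$, the last term is absorbed for $t$ large, giving $\norm{(\dot{x}_{t},\dot{\lambda}_{t})}\leq\frac{\abs{\dot{\varepsilon}(t)}}{\sqrt{\alpha}-\varepsilon(t)}\norm{(x_{t},\lambda_{t})}$. Squaring, inserting the first estimate for $\norm{(x_{t},\lambda_{t})}^{2}$, and using that $(\sqrt{\alpha}-\varepsilon(t))^{2}$ and $\alpha+\varepsilon^{2}(t)$ are comparable up to a constant for $t$ large (both tend to $\alpha$), one arrives at the third estimate. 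An alternative that avoids the triangle inequality is to square the formula for $\frac{\d}{\d t}T(x_{t},\lambda_{t})$, use $\norm{\frac{\d}{\d t}T(x_{t},\lambda_{t})}^{2}\geq\alpha\norm{(\dot{x}_{t},\dot{\lambda}_{t})}^{2}$, and discard the cross term $2\varepsilon(t)\dot{\varepsilon}(t)\ip{(x_{t},\lambda_{t})}{(\dot{x}_{t},\dot{\lambda}_{t})}\leq0$, which is nonpositive by Lemma \ref{sec2:lm:vcdiff}; this is closer to the paper's style.

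I expect the only delicate point to be the legitimacy of differentiating $t\mapsto T(x_{t},\lambda_{t})$ and of transferring \eqref{sec1:errorbound} to the derivative. Both are justified precisely because the optimality condition represents $T(x_{t},\lambda_{t})$ as the locally Lipschitz function $-\varepsilon(t)(x_{t},\lambda_{t})$ (Lemma \ref{sec2:lm:vcdiff}), so difference quotients converge and the lower bound \eqref{sec1:errorbound} survives the limit; no smoothness of $T$ beyond monotonicity and \eqref{sec1:errorbound} is needed. Everything else is elementary. I would also remark that the denominators $\alpha+\varepsilon^{2}(t)$ and $(\alpha+\varepsilon^{2}(t))^{2}$ in the first and third estimates are essentially cosmetic, since $\varepsilon(t)\to0$ keeps them between positive constants for $t$ large; they are retained for uniformity with the second estimate, where the bound is exact.
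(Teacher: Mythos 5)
Your proposal is correct and follows essentially the same route as the paper: everything is reduced to the identity $T(x_{t},\lambda_{t})=-\varepsilon(t)(x_{t},\lambda_{t})$, with the monotonicity of $T$ and condition \eqref{sec1:errorbound} giving the first two bounds (your combination of the ``basic inequality'' with \eqref{sec1:errorbound} reproduces exactly the paper's inequality $(\alpha+\varepsilon^{2}(t))\norm{(x_{t},\lambda_{t})-(\bar{x},\bar{\lambda})}^{2}\leq\varepsilon^{2}(t)\norm{(\bar{x},\bar{\lambda})}^{2}$), and the differentiated optimality condition together with the difference-quotient transfer of \eqref{sec1:errorbound} giving the third. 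The only deviations are cosmetic: the paper keeps exact constants (e.g.\ $\norm{T(0,0)}^{2}/(\alpha+\varepsilon^{2}(t))$ for the first bound and the cross term retained via monotonicity for the third), whereas you use boundedness of the curve and a triangle-inequality absorption valid for large $t$, which suffices for the stated $\OO$-estimates; your ``alternative'' for the third estimate is precisely the paper's argument.
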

\begin{proof}
	For each $t\geq t_{0}$, let $(x_{t},\lambda_{t})=\argminmax_{X\times Y}L_{t}$ and take $(\bar{x},\bar{\lambda})\in S\times M$.

	In view of the system of primal-dual optimality conditions, for every $t\geq t_{0}$, it holds that
	\begin{align*}
		\norm{T(0,0)}^{2}&=\norm{T(x_{t},\lambda_{t})-T(0,0)
			+\varepsilon(t)(x_{t},\lambda_{t})}^{2}\\
		&\geq\norm{T(x_{t},\lambda_{t})-T(0,0)}^{2}
		+\varepsilon^{2}(t)\norm{(x_{t},\lambda_{t})}^{2}\\
		&\geq\big(\alpha+\varepsilon^{2}(t)\big)\norm{(x_{t},\lambda_{t})}^{2},
	\end{align*}
	where the first inequality follows from the monotonicity of $T$ while the second one follows from condition \eqref{sec1:errorbound}, concluding the first estimate.

	Again, by virtue of condition \eqref{sec1:errorbound} and the system of primal-dual optimality conditions, for every $t\geq t_{0}$, we have
	\begin{align*}
		\varepsilon^{2}(t)\norm{(\bar{x},\bar{\lambda})}^{2}&=
		\norm{T(x_{t},\lambda_{t})-T(\bar{x},\bar{\lambda})
			+\varepsilon(t)((x_{t},\lambda_{t})-(\bar{x},\bar{\lambda}))}^{2}\\
		&\geq\big(\alpha+\varepsilon^{2}(t)\big)
		\norm{(x_{t},\lambda_{t})-(\bar{x},\bar{\lambda})}^{2}.
	\end{align*}
	Successively dividing by $\varepsilon^{2}(t)$ and passing to the upper limit as $t\to+\infty$ gives the second estimate.

	Consider now $(x_{t},\lambda_{t})=\argminmax_{X\times Y}L_{t}$ and $(x_{s},\lambda_{s})=\argminmax_{X\times Y}L_{s}$ for some $t>s\geq t_{0}$. Utilizing once again condition \eqref{sec1:errorbound}, we have
	\begin{equation*}
		\norm{T(x_{t},\lambda_{t})-T(x_{s},\lambda_{s})}^{2}\geq
		\alpha\norm{(x_{t},\lambda_{t})-(x_{s},\lambda_{s})}^{2}.
	\end{equation*}
	Upon dividing by $(t-s)^{2}$ and letting $s\to t$, for almost every $t\geq t_{0}$, we obtain
	\begin{equation*}
		\Big\lVert\frac{\d}{\d t}T(x_{t},\lambda_{t})\Big\rVert^{2}\geq
		\alpha\norm{(\dot{x}_{t},\dot{\lambda}_{t})}^{2}.
	\end{equation*}
	On the other hand, differentiating the system of primal-dual optimality conditions yields, for almost every $t\geq t_{0}$,
	\begin{equation*}
		\frac{\d}{\d t}T(x_{t},\lambda_{t})
		+\varepsilon(t)(\dot{x}_{t},\dot{\lambda}_{t})
		+\dot{\varepsilon}(t)(x_{t},\lambda_{t})=0.
	\end{equation*}
	Consequently, we have
	\begin{align*}
		\abs{\dot{\varepsilon}(t)}^{2}\norm{(x_{t},\lambda_{t})}^{2}&=
		\Big\lVert\frac{\d}{\d t}T(x_{t},\lambda_{t})
		+\varepsilon(t)(\dot{x}_{t},\dot{\lambda}_{t})\Big\rVert^{2}\\
		&\geq\Big\lVert\frac{\d}{\d t}T(x_{t},\lambda_{t})\Big\rVert^{2}
		+\varepsilon^{2}(t)\norm{(\dot{x}_{t},\dot{\lambda}_{t})}^{2}\\
		&\geq\big(\alpha+\varepsilon^{2}(t)\big)
		\norm{(\dot{x}_{t},\dot{\lambda}_{t})}^{2},
	\end{align*}
	where we again utilized the monotonicity of $T$. Combining the above estimates entails
	\begin{equation*}
		\frac{\abs{\dot{\varepsilon}(t)}^{2}}{\alpha+\varepsilon^{2}(t)}
		\norm{T(0,0)}^{2}\geq\big(\alpha+\varepsilon^{2}(t)\big)
		\norm{(\dot{x}_{t},\dot{\lambda}_{t})}^{2},
	\end{equation*}
	concluding the result.
\end{proof}

\begin{remark}
	We note that similar estimates can be derived under the more general assumption that the perturbed operator $T_{t}=T+\varepsilon(t)\id$ is such that there exists $\alpha:[t_{0},+\infty[\ \to \ ]0,+\infty[$ verifying, for every $(x,\lambda),(\xi,\eta)\in X\times Y$ and $t\geq t_{0}$,
	\begin{equation*}
		\norm{T_{t}(x,\lambda)-T_{t}(\xi,\eta)}^{2}\geq\alpha(t)\norm{(x,\lambda)-(\xi,\eta)}^{2}.
	\end{equation*}
	We leave the details to the reader.
\end{remark}

%% THE (AHT) DIFFERENTIAL SYSTEM
\section{The \eqref{sec1:arrhurtik} differential system}\label{sec3}
In the following, we presuppose that
\begin{enumerate}[\hspace{6pt}({A}1)]
	\item $f:X\to\RR$ is convex and continuously differentiable;
	\item $\nabla f:X\to X$ is Lipschitz continuous on the bounded subsets of $X$;
	\item $A:X\to Y$ is linear and continuous, and $b\in Y$;
	\item $\varepsilon:[t_{0},+\infty[\ \to \ ]0,+\infty[$ is continuously
		differentiable such that
		\begin{equation*}
			\lim_{t\to+\infty}\varepsilon(t)=0.
		\end{equation*}
\end{enumerate}

Consider again the nonautonomous differential system\footnote{%
	In view of the above assumptions, we readily observe that the governing operator $T:X\times Y\to X\times Y$ of the \eqref{sec1:arrhurtik} differential system is Lipschitz continuous on the bounded subsets of $X\times Y$.
}
\begin{equation}
	\renewcommand{\theequation}{AHT}\tag{\theequation}
	\begin{cases}
		\begin{aligned}
			\dot{x}+\nabla f(x)+A^{\ast}\lambda+\varepsilon(t)x&=0\\
			\dot{\lambda}+b-Ax+\varepsilon(t)\lambda&=0
		\end{aligned}
	\end{cases}
\end{equation}
with initial data $(x_{0},\lambda_{0})\in X\times Y$. Throughout, we assume that \eqref{sec1:arrhurtik} admits, for each $(x_{0},\lambda_{0})\in X\times Y$, a unique (classical) solution, that is, a continuously differentiable function $(x,\lambda):[t_{0},+\infty[\ \to X\times Y$ which verifies \eqref{sec1:arrhurtik} on $[t_{0},+\infty[$ with $(x(t_{0}),\lambda(t_{0}))=(x_{0},\lambda_{0})$ for some $t_{0}\geq0$; see, e.g., Haraux \cite{AH:91}. We refer the reader to Crandall and Pazy \cite{MGC-AP:72}, Furuya et al.~\cite{HF-KM-NK:86}, and Kenmochi \cite{NK:81} for the respective results on nonautonomous evolution equations governed by maximally monotone operators.

Consider again, for each $t\geq t_{0}$, the regularized Lagrangian
\begin{align*}
	L_{t}:X\times Y&\longrightarrow\RR\\[-1ex]
	(x,\lambda)&\longmapsto L(x,\lambda)
	+\frac{\varepsilon(t)}{2}\big(\normX{x}^{2}-\normY{\lambda}^{2}\big)
\end{align*}
associated with the convex minimization problem \eqref{sec1:cvxproblem}. In view of the $\varepsilon(t)$-strong convexity-concavity of the saddle function $L_{t}$, we immediately obtain that for every $(x,\lambda),(\xi,\eta)\in X\times Y$ and $t\geq t_{0}$, it holds that
\begin{equation}
	\begin{gathered}\label{sec3:ttltineq}
		\ip{T_{t}(x,\lambda)}{(x,\lambda)-(\xi,\eta)}
		\geq L_{t}(x,\eta)-L_{t}(\xi,\lambda)\\
		+\,\frac{\varepsilon(t)}{2}\norm{(x,\lambda)-(\xi,\eta)}^{2}.
	\end{gathered}
\end{equation}
Utilizing the above inequality relative to the \eqref{sec1:arrhurtik} evolution system gives the following preliminary estimates with $\rho:[t_{0},+\infty[\ \to\RR$ being defined by
\begin{equation*}
	\rho(t)=\int_{t_{0}}^{t}\varepsilon(\tau)\d\tau.
\end{equation*}
\begin{proposition}\label{sec3:pr:boundedness}
	Let $S\times M$ be non-empty and let $(x,\lambda):[t_{0},+\infty[\ \to X\times Y$ be a solution of \eqref{sec1:arrhurtik}. Then $t\mapsto(x(t),\lambda(t))$ is bounded on $[t_{0},+\infty[$. Moreover, for every $(\bar{x},\bar{\lambda})\in S\times M$, it holds that
	\begin{align*}
		\lim_{t\to+\infty}\e^{-\rho(t)}\int_{t_{0}}^{t}\e^{\rho(\tau)}
		\big(L(x(\tau),\bar{\lambda})-L(\bar{x},\lambda(\tau))\big)\d\tau&<+\infty;
		\\
		\lim_{t\to+\infty}\e^{-\rho(t)}\int_{t_{0}}^{t}\e^{\rho(\tau)}
		\frac{\varepsilon(\tau)}{2}\norm{(x(\tau),\lambda(\tau))}^{2}\d\tau
		&<+\infty.
	\end{align*}
\end{proposition}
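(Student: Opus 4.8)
The plan is to run the anchored-energy (Lyapunov) argument powered by the strong convexity-concavity inequality \eqref{sec3:ttltineq}. Fix $(\bar{x},\bar{\lambda})\in S\times M$ and introduce $\mathcal{E}(t)=\frac{1}{2}\norm{(x(t),\lambda(t))-(\bar{x},\bar{\lambda})}^{2}$. Since the solution of \eqref{sec1:arrhurtik} is continuously differentiable, so is $\mathcal{E}$, and writing the evolution equation compactly as $(\dot{x},\dot{\lambda})=-T_{t}(x,\lambda)$ gives $\dot{\mathcal{E}}(t)=-\ip{T_{t}(x(t),\lambda(t))}{(x(t),\lambda(t))-(\bar{x},\bar{\lambda})}$ for all $t\geq t_{0}$.

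First I would insert this into \eqref{sec3:ttltineq} evaluated at $(x,\lambda)=(x(t),\lambda(t))$ and $(\xi,\eta)=(\bar{x},\bar{\lambda})$, and expand the regularized gap as
\[
	L_{t}(x(t),\bar{\lambda})-L_{t}(\bar{x},\lambda(t))
	=L(x(t),\bar{\lambda})-L(\bar{x},\lambda(t))
	+\frac{\varepsilon(t)}{2}\norm{(x(t),\lambda(t))}^{2}
	-\frac{\varepsilon(t)}{2}\norm{(\bar{x},\bar{\lambda})}^{2}.
\]
Recognizing that $\frac{\varepsilon(t)}{2}\norm{(x(t),\lambda(t))-(\bar{x},\bar{\lambda})}^{2}=\varepsilon(t)\mathcal{E}(t)$, one is led to the differential inequality
\[
	\dot{\mathcal{E}}(t)+\varepsilon(t)\mathcal{E}(t)
	+\big(L(x(t),\bar{\lambda})-L(\bar{x},\lambda(t))\big)
	+\frac{\varepsilon(t)}{2}\norm{(x(t),\lambda(t))}^{2}
	\leq\frac{\varepsilon(t)}{2}\norm{(\bar{x},\bar{\lambda})}^{2}
\]
valid for all $t\geq t_{0}$. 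The two sign facts that do all the work here are that $L(x(t),\bar{\lambda})-L(\bar{x},\lambda(t))\geq0$, since $(\bar{x},\bar{\lambda})$ is a saddle point of $L$, and that $\frac{\varepsilon(t)}{2}\norm{(x(t),\lambda(t))}^{2}\geq0$; both extra terms on the left are therefore non-negative and may be retained or discarded at will.

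Next, I would multiply by the integrating factor $\e^{\rho(t)}$; since $\dot{\rho}=\varepsilon$, the first two terms combine into $\frac{\d}{\d t}\big(\e^{\rho(t)}\mathcal{E}(t)\big)$ and the right-hand side becomes $\frac{1}{2}\norm{(\bar{x},\bar{\lambda})}^{2}\frac{\d}{\d t}\e^{\rho(t)}$. Integrating over $[t_{0},t]$ (with $\rho(t_{0})=0$) and discarding the non-negative term $\e^{\rho(t)}\mathcal{E}(t)$ gives
\[
	\int_{t_{0}}^{t}\e^{\rho(\tau)}\Big(\big(L(x(\tau),\bar{\lambda})-L(\bar{x},\lambda(\tau))\big)+\frac{\varepsilon(\tau)}{2}\norm{(x(\tau),\lambda(\tau))}^{2}\Big)\d\tau\leq\mathcal{E}(t_{0})+\frac{1}{2}\norm{(\bar{x},\bar{\lambda})}^{2}\e^{\rho(t)};
\]
multiplying by $\e^{-\rho(t)}$ and using that each summand of the integrand is non-negative, each of the two exponentially weighted averages appearing in the statement is separately bounded above by $\mathcal{E}(t_{0})+\frac{1}{2}\norm{(\bar{x},\bar{\lambda})}^{2}$ for every $t\geq t_{0}$, which is exactly the asserted finiteness of the limits. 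For the boundedness of $(x(t),\lambda(t))$ I would instead discard the (non-negative) integral term from the integrated inequality, obtaining $\mathcal{E}(t)\leq\e^{-\rho(t)}\mathcal{E}(t_{0})+\frac{1}{2}\norm{(\bar{x},\bar{\lambda})}^{2}\big(1-\e^{-\rho(t)}\big)\leq\mathcal{E}(t_{0})+\frac{1}{2}\norm{(\bar{x},\bar{\lambda})}^{2}$.

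There is no genuine obstacle here; the argument is essentially bookkeeping around \eqref{sec3:ttltineq}. The only points deserving care are the correct expansion of $L_{t}$ into $L$ plus the two quadratic terms inside \eqref{sec3:ttltineq}, and the observation that the saddle-point inequality for $L$ renders the primal-dual gap $L(x(t),\bar{\lambda})-L(\bar{x},\lambda(t))$ non-negative, which is precisely what permits splitting the single integral bound into the two separate estimates claimed. One should also invoke at the outset the $C^{1}$ regularity of the solution in order to justify differentiating $\mathcal{E}$ and applying the fundamental theorem of calculus.
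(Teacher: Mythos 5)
Your proposal is correct and follows essentially the same route as the paper: the same anchored energy $\phi(t)=\tfrac12\norm{(x(t),\lambda(t))-(\bar{x},\bar{\lambda})}^{2}$, the same use of inequality \eqref{sec3:ttltineq} with the expansion of $L_{t}$, the same integrating factor $\e^{\rho(t)}$, and the same sign arguments (saddle-point inequality for the gap, non-negativity of the quadratic term) to split off the boundedness of $\phi$ and the two weighted-average estimates. The only (harmless) cosmetic difference is that you keep the factor $1-\e^{-\rho(t)}$ in the boundedness estimate, which is marginally sharper than, but equivalent in substance to, the paper's bound.
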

\begin{proof}
	Let $(\bar{x},\bar{\lambda})\in S\times M$ and define $\phi:[t_{0},+\infty[\ \to\RR$ by $\phi(t)=\norm{(x(t),\lambda(t))-(\bar{x},\bar{\lambda})}^{2}/2$. Taking the inner product with $(x(t),\lambda(t))-(\bar{x},\bar{\lambda})$ in \eqref{sec1:arrhurtik} and subsequently applying the chain rule yields, for every $t\geq t_{0}$,
	\begin{equation*}
		\dot{\phi}(t)
		+\ip{T_{t}(x(t),\lambda(t))}{(x(t),\lambda(t))-(\bar{x},\bar{\lambda})}=0.
	\end{equation*}
	In view of inequality \eqref{sec3:ttltineq}, we obtain
	\begin{equation*}
		\dot{\phi}(t)+\varepsilon(t)\phi(t)
		+L_{t}(x(t),\bar{\lambda})-L_{t}(\bar{x},\lambda(t))\leq0.
	\end{equation*}
	Equivalently, we have
	\begin{equation}
		\begin{gathered}\label{sec3:bdconvineq}
			\dot{\phi}(t)+\varepsilon(t)\phi(t)
			+L(x(t),\bar{\lambda})-L(\bar{x},\lambda(t))\\
			+\,\frac{\varepsilon(t)}{2}\norm{(x(t),\lambda(t))}^{2}
			\leq\frac{\varepsilon(t)}{2}\norm{(\bar{x},\bar{\lambda})}^{2}.
		\end{gathered}
	\end{equation}
	Multiplying the above inequality by $\e^{\rho(t)}$ yields
	\begin{gather*}
		\frac{\d}{\d t}\big(\e^{\rho(t)}\phi(t)\big)
		+\e^{\rho(t)}\big(L(x(t),\bar{\lambda})-L(\bar{x},\lambda(t))\big)\\
		+\,\e^{\rho(t)}\frac{\varepsilon(t)}{2}\norm{(x(t),\lambda(t))}^{2}
		\leq\frac{1}{2}\frac{\d}{\d t}\e^{\rho(t)}
		\norm{(\bar{x},\bar{\lambda})}^{2}.
	\end{gather*}
	Integrating over $[t_{0},t]$ and subsequently dividing by $\e^{\rho(t)}$ entails
	\begin{equation}
		\begin{gathered}\label{sec3:bdineq}
			\phi(t)+\e^{-\rho(t)}\int_{t_{0}}^{t}\e^{\rho(\tau)}
			\big(L(x(\tau),\bar{\lambda})-L(\bar{x},\lambda(\tau))\big)\d\tau\\
			+\,\e^{-\rho(t)}\int_{t_{0}}^{t}\e^{\rho(\tau)}\frac{\varepsilon(\tau)}{2}
			\norm{(x(\tau),\lambda(\tau))}^{2}\d\tau\leq\e^{-\rho(t)}\phi(t_{0})
			+\frac{1}{2}\norm{(\bar{x},\bar{\lambda})}^{2}.
		\end{gathered}
	\end{equation}
	Noticing that $(\bar{x},\bar{\lambda})$ is a saddle point of $L$, it holds that
	\begin{equation*}
		\e^{-\rho(t)}\int_{t_{0}}^{t}\e^{\rho(\tau)}
		\big(L(x(\tau),\bar{\lambda})-L(\bar{x},\lambda(\tau))\big)\d\tau\geq0.
	\end{equation*}
	Consequently, we have
	\begin{equation*}
		\phi(t)\leq\e^{-\rho(t)}\phi(t_{0})
		+\frac{1}{2}\norm{(\bar{x},\bar{\lambda})}^{2},
	\end{equation*}
	implying that $(x(t),\lambda(t))$ remains bounded on $[t_{0},+\infty[$.

	On the other hand, utilizing inequality \eqref{sec3:bdineq} and taking into account that $\phi(t)\geq0$, we obtain
	\begin{gather*}
		\e^{-\rho(t)}\int_{t_{0}}^{t}\e^{\rho(\tau)}
		\big(L(x(\tau),\bar{\lambda})-L(\bar{x},\lambda(\tau))\big)\d\tau\\
		+\,\e^{-\rho(t)}\int_{t_{0}}^{t}\e^{\rho(\tau)}\frac{\varepsilon(\tau)}{2}
		\norm{(x(\tau),\lambda(\tau))}^{2}\d\tau
		\leq\e^{-\rho(t)}\phi(t_{0})+\frac{1}{2}\norm{(\bar{x},\bar{\lambda})}^{2}.
	\end{gather*}
	Passing to the limit as $t\to+\infty$ entails
	\begin{align*}
		\lim_{t\to+\infty}\e^{-\rho(t)}\int_{0}^{t}\e^{\rho(\tau)}
		\big(L(x(\tau),\bar{\lambda})-L(\bar{x},\lambda(\tau))\big)\d\tau&<+\infty,
		\ \text{and}\\
		\lim_{t\to+\infty}\e^{-\rho(t)}\int_{t_{0}}^{t}\e^{\rho(\tau)}
		\frac{\varepsilon(\tau)}{2}\norm{(x(\tau),\lambda(\tau))}^{2}\d\tau
		&<+\infty,
	\end{align*}
	concluding the desired estimates.
\end{proof}

In view of the above result, the following estimate as outlined in Cominetti et al.~\cite{RC-JP-SS:08} is verified whenever the Tikhonov regularization function $\varepsilon:[t_{0},+\infty[\ \to\ ]0,+\infty[$ is such that
\begin{equation*}
	\int_{t_{0}}^{\infty}\varepsilon(\tau)\d\tau=+\infty.
\end{equation*}
\begin{corollary}\label{sec3:co:limsup}
	Under the hypotheses of Proposition \ref{sec3:pr:boundedness}, suppose that $\varepsilon\notin\LL^{1}([t_{0},+\infty[)$. Then, for every $(\bar{x},\bar{\lambda})\in S\times M$, it holds that
	\begin{equation*}
		\limsup_{t\to+\infty}\,\norm{(x(t),\lambda(t))-(\bar{x},\bar{\lambda})}^{2}
		\leq\norm{(\bar{x},\bar{\lambda})}^{2}
		-\liminf_{t\to+\infty}\,\norm{(x(t),\lambda(t))}^{2}.
	\end{equation*}
\end{corollary}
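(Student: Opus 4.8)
The plan is to read the statement off inequality \eqref{sec3:bdineq} established in the proof of Proposition \ref{sec3:pr:boundedness}, combined with a standard averaging argument and the hypothesis $\varepsilon\notin\LL^{1}([t_{0},+\infty[)$.

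First, since $\varepsilon\notin\LL^{1}([t_{0},+\infty[)$, the auxiliary function $\rho$ satisfies $\rho(t)\to+\infty$ as $t\to+\infty$, hence $\e^{-\rho(t)}\to0$. Writing $\phi(t)=\frac{1}{2}\norm{(x(t),\lambda(t))-(\bar{x},\bar{\lambda})}^{2}$ as in Proposition \ref{sec3:pr:boundedness} and using that $(\bar{x},\bar{\lambda})$ is a saddle point of $L$, so that the Lagrangian-gap integrand in \eqref{sec3:bdineq} is nonnegative, inequality \eqref{sec3:bdineq} yields, for all $t\geq t_{0}$,
\begin{equation*}
	\phi(t)+\e^{-\rho(t)}\int_{t_{0}}^{t}\e^{\rho(\tau)}\frac{\varepsilon(\tau)}{2}\norm{(x(\tau),\lambda(\tau))}^{2}\d\tau\leq\e^{-\rho(t)}\phi(t_{0})+\frac{1}{2}\norm{(\bar{x},\bar{\lambda})}^{2}.
\end{equation*}

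Next I would establish (or cite) the elementary averaging fact: for a bounded measurable $g:[t_{0},+\infty[\ \to[0,+\infty[$ and $h(t):=\e^{\rho(t)}$ (so that $h'=\e^{\rho}\varepsilon\geq0$ and $h(t)\to+\infty$ by the first step),
\begin{equation*}
	\liminf_{t\to+\infty}\frac{1}{h(t)}\int_{t_{0}}^{t}h'(\tau)g(\tau)\d\tau\geq\liminf_{t\to+\infty}g(t).
\end{equation*}
This follows by fixing $\delta>0$, choosing $T\geq t_{0}$ with $g(\tau)\geq m-\delta$ for $\tau\geq T$, where $m:=\liminf_{t}g(t)\geq0$, splitting $\int_{t_{0}}^{t}=\int_{t_{0}}^{T}+\int_{T}^{t}$, bounding the second piece from below by $(m-\delta)(h(t)-h(T))$, dividing by $h(t)$, and letting $t\to+\infty$ and then $\delta\downarrow0$, using $h(t)\to+\infty$. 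Applying this with $g(\tau)=\frac{1}{2}\norm{(x(\tau),\lambda(\tau))}^{2}$, which is bounded by Proposition \ref{sec3:pr:boundedness}, gives
\begin{equation*}
	\liminf_{t\to+\infty}\e^{-\rho(t)}\int_{t_{0}}^{t}\e^{\rho(\tau)}\frac{\varepsilon(\tau)}{2}\norm{(x(\tau),\lambda(\tau))}^{2}\d\tau\geq\frac{1}{2}\liminf_{t\to+\infty}\norm{(x(t),\lambda(t))}^{2}.
\end{equation*}

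Finally, I would pass to the upper limit in the displayed inequality of the first step. Using $\e^{-\rho(t)}\phi(t_{0})\to0$, the subadditivity of $\limsup$, and the identity $\limsup_{t}(-c(t))=-\liminf_{t}c(t)$ applied to the averaged term, one obtains $\limsup_{t}\phi(t)\leq\frac{1}{2}\norm{(\bar{x},\bar{\lambda})}^{2}-\frac{1}{2}\liminf_{t}\norm{(x(t),\lambda(t))}^{2}$, which upon multiplication by $2$ is precisely the claimed estimate. The only point requiring any care is the averaging lemma, and its proof is the routine argument sketched above; the rest is bookkeeping on \eqref{sec3:bdineq}.
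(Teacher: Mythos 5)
Your proposal is correct. Where the paper's proof and yours diverge is only in the packaging: the paper goes back to the differential inequality \eqref{sec3:bdconvineq}, drops the nonnegative Lagrangian gap using the saddle-point property, and then invokes the quoted Lemma \ref{app:lm:lsineq} of Cominetti--Peypouquet--Sorin with $\vartheta(t)=\tfrac12\bigl(\norm{(\bar{x},\bar{\lambda})}^{2}-\norm{(x(t),\lambda(t))}^{2}\bigr)$, which delivers $\limsup_{t}\phi(t)\leq\limsup_{t}\vartheta(t)$ in one stroke. You instead start from the already-integrated inequality \eqref{sec3:bdineq}, drop the same gap term, and prove by hand the exponentially weighted averaging bound $\liminf_{t}\frac{1}{h(t)}\int_{t_{0}}^{t}h'(\tau)g(\tau)\d\tau\geq\liminf_{t}g(\tau)$ with $h=\e^{\rho}$ applied to $g=\tfrac12\norm{(x(\cdot),\lambda(\cdot))}^{2}$; this averaging fact is precisely the mechanism underlying Lemma \ref{app:lm:lsineq}, so you are in effect re-proving the appendix lemma in the liminf form rather than citing it. Your splitting argument for the averaging estimate is sound (the lower bound $(m-\delta)(h(t)-h(T))$ is valid even when $m-\delta<0$ since $h'\geq0$, and $h(t)\to+\infty$ because $\varepsilon\notin\LL^{1}$, so the finite piece over $[t_{0},T]$ washes out), and the boundedness of $(x(t),\lambda(t))$ from Proposition \ref{sec3:pr:boundedness} ensures all the limits involved are finite, so the final limsup bookkeeping is legitimate. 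The trade-off is transparency versus economy: your route is self-contained and makes the averaging step explicit, while the paper's is shorter by outsourcing it to the appendix lemma; mathematically the two arguments are equivalent.
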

\begin{proof}
	Recall from inequality \eqref{sec3:bdconvineq} that for every $t\geq t_{0}$, we have
	\begin{gather*}
		\dot{\phi}(t)+\varepsilon(t)\phi(t)
		+L(x(t),\bar{\lambda})-L(\bar{x},\lambda(t))\\
		\leq\frac{\varepsilon(t)}{2}
		\big(\norm{(\bar{x},\bar{\lambda})}^{2}-\norm{(x(t),\lambda(t))}^{2}\big).
	\end{gather*}
	Since $(\bar{x},\bar{\lambda})$ is a saddle point of $L$, it holds that
	\begin{equation*}
		\dot{\phi}(t)+\varepsilon(t)\phi(t)\leq\frac{\varepsilon(t)}{2}
		\big(\norm{(\bar{x},\bar{\lambda})}^{2}-\norm{(x(t),\lambda(t))}^{2}\big).
	\end{equation*}
	Using that $(x(t),\lambda(t))$ remains bounded on $[t_{0},+\infty[$ together with the fact that $\varepsilon\notin\LL^{1}([t_{0},+\infty[)$, applying Lemma \ref{app:lm:lsineq} entails
	\begin{equation*}
		\limsup_{t\to+\infty}\,\phi(t)\leq
		\limsup_{t\to+\infty}\,\frac{1}{2}\big(\norm{(\bar{x},\bar{\lambda})}^{2}
			-\norm{(x(t),\lambda(t))}^{2}\big),
	\end{equation*}
	which is the desired estimate.
\end{proof}

\begin{remark}\label{sec3:rm:liminf}
	Anchoring the above inequality to $\proj_{S\times M}(0,0)$ suggests that the solutions $(x(t),\lambda(t))$ of \eqref{sec1:arrhurtik} strongly converge, as $t\to+\infty$, toward $\proj_{S\times M}(0,0)$ as soon as
	\begin{equation*}
		\liminf_{t\to+\infty}\,\norm{(x(t),\lambda(t))}
		\geq\norm{\proj_{S\times M}(0,0)}.
	\end{equation*}
\end{remark}

Our next result provides sufficient conditions for this inequality to hold assuming that $\varepsilon:[t_{0},+\infty[\ \to\ ]0,+\infty[$ satisfies either one of the following estimates:
\begin{align*}
	\int_{t_{0}}^{\infty}\abs{\dot{\varepsilon}(\tau)}\d\tau&<+\infty,\ \text{or}\\
	\int_{t_{0}}^{\infty}\frac{\abs{\dot{\varepsilon}(\tau)}^{2}}{\varepsilon(\tau)}\d\tau&<+\infty.
\end{align*}
\begin{theorem}\label{sec3:th:zerogap}
	Let $S\times M$ be non-empty, let $(x,\lambda):[t_{0},+\infty[\ \to X\times Y$ be a solution of \eqref{sec1:arrhurtik}, and suppose that $\varepsilon\notin\LL^{1}([t_{0},+\infty[)$ with either $\dot{\varepsilon}\in\LL^{1}([t_{0},+\infty[)$ or $\abs{\dot{\varepsilon}}^{2}/\varepsilon\in\LL^{1}([t_{0},+\infty[)$. Then, for every $(\bar{x},\bar{\lambda})\in S\times M$, it holds that
	\begin{align*}
		\lim_{t\to+\infty}\big(L(x(t),\bar{\lambda})-L(\bar{x},\lambda(t))\big)
		&=0;\\
		\lim_{t\to+\infty}\norm{(\dot{x}(t),\dot{\lambda}(t))}&=0.
	\end{align*}
\end{theorem}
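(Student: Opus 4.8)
The plan is to exploit the energy identity for the \eqref{sec1:arrhurtik} system obtained by differentiating a suitable Lyapunov functional, in the same spirit as the proof of Proposition~\ref{sec3:pr:boundedness}, and then to combine it with the strong-convergence mechanism encoded in Corollary~\ref{sec3:co:limsup} and Remark~\ref{sec3:rm:liminf}. Fix $(\bar{x},\bar{\lambda})\in S\times M$ and set $\phi(t)=\tfrac12\norm{(x(t),\lambda(t))-(\bar{x},\bar{\lambda})}^{2}$. Taking the inner product of \eqref{sec1:arrhurtik} with the velocity $(\dot{x}(t),\dot{\lambda}(t))$ and using that $T=\nabla_{x}L\times(-\nabla_{\lambda}L)$ together with the chain rule, one gets an identity of the form
\begin{equation*}
	\norm{(\dot{x}(t),\dot{\lambda}(t))}^{2}
	+\frac{\d}{\d t}\big(L_{t}(x(t),\lambda(t))\ \text{type quantity}\big)
	+\varepsilon(t)\ip{(x(t),\lambda(t))}{(\dot{x}(t),\dot{\lambda}(t))}=
	-\frac{\dot{\varepsilon}(t)}{2}\big(\normX{x(t)}^{2}-\normY{\lambda(t)}^{2}\big),
\end{equation*}
where the $\dot{\varepsilon}$-term on the right is the obstruction coming from the nonautonomous regularization; since $(x(t),\lambda(t))$ is bounded by Proposition~\ref{sec3:pr:boundedness}, this term is $\OO(\abs{\dot{\varepsilon}(t)})$, hence integrable under either of the two hypotheses (for $\abs{\dot{\varepsilon}}^{2}/\varepsilon\in\LL^{1}$ one first controls the cross term by Young's inequality, paying a factor $\varepsilon(t)\norm{(\dot{x},\dot{\lambda})}^{2}$ against $\abs{\dot{\varepsilon}(t)}^{2}/\varepsilon(t)$, and absorbs it).

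The key steps, in order, are: (1) establish the above energy estimate and integrate it on $[t_{0},t]$, using boundedness of the trajectory to bound all $\varepsilon$- and $\dot{\varepsilon}$-dependent boundary and source terms; this yields $\int_{t_{0}}^{\infty}\norm{(\dot{x}(\tau),\dot{\lambda}(\tau))}^{2}\d\tau<+\infty$ together with convergence of the functional $L_{t}(x(t),\bar{\lambda})-L_{t}(\bar{x},\lambda(t))$ to some limit; (2) combine $\dot{x},\dot{\lambda}\in\LL^{2}$ with the fact that, from \eqref{sec1:arrhurtik} and boundedness, $(\dot{x},\dot{\lambda})$ is Lipschitz (its derivative is bounded, using (A2) on bounded sets), so $\norm{(\dot{x}(t),\dot{\lambda}(t))}\to0$ as $t\to+\infty$ by a standard Barbalat-type argument; (3) feed $\dot{x},\dot{\lambda}\to0$ back into \eqref{sec1:arrhurtik}, together with $\varepsilon(t)\to0$ and boundedness, to deduce $T(x(t),\lambda(t))\to0$ along any convergent subsequence, hence weak cluster points of $(x(t),\lambda(t))$ lie in $S\times M$; (4) to control the primal-dual gap, note that $L(x(t),\bar{\lambda})-L(\bar{x},\lambda(t))\geq0$ and that, by inequality \eqref{sec3:bdconvineq} rewritten as $\dot{\phi}(t)+\varepsilon(t)\phi(t)+\big(L(x(t),\bar\lambda)-L(\bar x,\lambda(t))\big)\le\tfrac{\varepsilon(t)}{2}(\norm{(\bar x,\bar\lambda)}^{2}-\norm{(x(t),\lambda(t))}^{2})$, one applies Lemma~\ref{app:lm:lsineq} (the averaging lemma used in Corollary~\ref{sec3:co:limsup}) with $\varepsilon\notin\LL^{1}$ to transfer the limsup of the right-hand side to the limsup of $\phi$ and of the gap; combined with Corollary~\ref{sec3:co:limsup} anchored at $\proj_{S\times M}(0,0)$, this forces $\liminf_{t}\norm{(x(t),\lambda(t))}\ge\norm{\proj_{S\times M}(0,0)}$ via weak lower semicontinuity at a weak cluster point, which by Remark~\ref{sec3:rm:liminf} gives strong convergence and in particular $L(x(t),\bar\lambda)-L(\bar x,\lambda(t))\to0$.

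The main obstacle I anticipate is reconciling the two alternative hypotheses on $\dot{\varepsilon}$ within a single energy argument: under $\dot{\varepsilon}\in\LL^{1}$ the source term is directly integrable, but under $\abs{\dot{\varepsilon}}^{2}/\varepsilon\in\LL^{1}$ one must be careful to split the cross term $\varepsilon(t)\ip{(x,\lambda)}{(\dot x,\dot\lambda)}$ and the $\dot\varepsilon$-term by Young's inequality so that the resulting $\tfrac12\varepsilon(t)\norm{(\dot x,\dot\lambda)}^{2}$ is absorbed into the left-hand side while leaving a residual of order $\abs{\dot\varepsilon(t)}^{2}/\varepsilon(t)$; getting the constants to close in both cases simultaneously, and making sure the Barbalat step only needs $\LL^{2}$-integrability of the velocity plus a uniform Lipschitz bound (rather than the stronger $\LL^{1}$), is where the delicate bookkeeping lies. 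A secondary subtlety is the passage from convergence of $L_{t}(x(t),\bar\lambda)-L_{t}(\bar x,\lambda(t))$ to convergence of the unregularized gap, which requires knowing $\varepsilon(t)\norm{(x(t),\lambda(t))}^{2}\to0$; this follows once strong convergence of the trajectory to $\proj_{S\times M}(0,0)$ is in hand, so the logical order of steps (3)–(4) must be arranged to avoid circularity.
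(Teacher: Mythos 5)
Your plan breaks down at step (1): the energy identity you posit does not exist for the Arrow--Hurwicz operator, because $T$ is not a potential operator. Pairing \eqref{sec1:arrhurtik} with the velocity gives $\norm{(\dot{x},\dot{\lambda})}^{2}+\ip{T(x,\lambda)}{(\dot{x},\dot{\lambda})}+\varepsilon(t)\ip{(x,\lambda)}{(\dot{x},\dot{\lambda})}=0$, and here $\ip{T(x,\lambda)}{(\dot{x},\dot{\lambda})}=\ipX{\nabla_{x}L(x,\lambda)}{\dot{x}}-\ipY{\nabla_{\lambda}L(x,\lambda)}{\dot{\lambda}}$, which is \emph{not} the time derivative of $L_{t}(x(t),\lambda(t))$ or of any ``$L_{t}$-type'' quantity: the sign on the dual block is reversed, and the skew part $\ipY{\lambda}{A\dot{x}}-\ipY{Ax-b}{\dot{\lambda}}$ is not a total derivative. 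For $f\equiv0$ the unregularized dynamics is a pure rotation along which $\norm{(\dot{x},\dot{\lambda})}$ is constant, so no dissipation identity of the kind you write can deliver $\dot{x},\dot{\lambda}\in\LL^{2}$ or the convergence of $L_{t}(x(t),\bar{\lambda})-L_{t}(\bar{x},\lambda(t))$; and in general the velocity need not be square integrable under the stated hypotheses (the available decay is only of order $\e^{-2\rho(t)}+\varepsilon^{2}(t)$, cf.\ Proposition \ref{sec4:pr:velrate}). Since your steps (2)--(4) all hinge on this $\LL^{2}$ bound, the chain collapses. A secondary inaccuracy: Lemma \ref{app:lm:lsineq} controls only $\limsup\phi$; it does not ``transfer'' a limsup onto the gap term in \eqref{sec3:bdconvineq}.

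The mechanism that actually makes the velocity vanish is the monotonicity of $T$ applied to its time derivative along the trajectory: differentiating $\vartheta(t)=\tfrac12\norm{(\dot{x}(t),\dot{\lambda}(t))}^{2}$ and using \eqref{sec1:arrhurtik} yields $\dot{\vartheta}(t)+2\varepsilon(t)\vartheta(t)+\big\langle\frac{\d}{\d t}T(x(t),\lambda(t)),(\dot{x}(t),\dot{\lambda}(t))\big\rangle+\frac{\dot{\varepsilon}(t)}{2}\frac{\d}{\d t}\norm{(x(t),\lambda(t))}^{2}=0$, with the third term non-negative by monotonicity. Multiplying by $\e^{2\rho(t)}$, using Cauchy--Schwarz, boundedness of the trajectory and the Gronwall-type Lemma \ref{app:lm:gtineq} settles the case $\dot{\varepsilon}\in\LL^{1}$; multiplying by $\e^{\rho(t)}$ and absorbing the cross term via Young's inequality (paying $\abs{\dot{\varepsilon}}^{2}/(2\varepsilon)$, absorbing $\varepsilon\vartheta$ on the left) settles the case $\abs{\dot{\varepsilon}}^{2}/\varepsilon\in\LL^{1}$; in both cases $\varepsilon\notin\LL^{1}$, i.e.\ $\rho(t)\to+\infty$, forces $\vartheta(t)\to0$. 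The gap statement then follows directly from inequality \eqref{sec3:ttltineq} with $(\xi,\eta)=(\bar{x},\bar{\lambda})$, since $T_{t}(x(t),\lambda(t))=-(\dot{x}(t),\dot{\lambda}(t))\to(0,0)$ strongly and the trajectory is bounded; no recourse to strong convergence is needed (indeed the paper deduces strong convergence afterwards, in Proposition \ref{sec3:pr:strongconv}, \emph{from} this theorem), and your ordering of steps (3)--(4), while not circular in itself, is only as sound as the velocity decay it presupposes.
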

\begin{proof}
	Let $\vartheta:[t_{0},+\infty[\ \to\RR$ be defined by $\vartheta(t)=\norm{(\dot{x}(t),\dot{\lambda}(t))}^{2}/2$. Differentiating $\vartheta(t)$ and taking \eqref{sec1:arrhurtik} into account yields, for almost every $t\geq t_{0}$,
	\begin{equation*}
		\dot{\vartheta}(t)
		+\Big\langle\frac{\d}{\d t}T_{t}(x(t),\lambda(t)),
		(\dot{x}(t),\dot{\lambda}(t))\Big\rangle=0.
	\end{equation*}
	Equivalently, we have
	\begin{equation}
		\begin{gathered}\label{sec3:csinequ}
			\dot{\vartheta}(t)+2\varepsilon(t)\vartheta(t)
			+\Big\langle\frac{\d}{\d t}T(x(t),\lambda(t)),
			(\dot{x}(t),\dot{\lambda}(t))\Big\rangle\\
			+\,\frac{\dot{\varepsilon}(t)}{2}\frac{\d}{\d t}
			\norm{(x(t),\lambda(t))}^{2}=0.
		\end{gathered}
	\end{equation}

	Let us first consider the case when $\dot{\varepsilon}\in\LL^{1}([t_{0},+\infty[)$. Multiplying the above inequality by $\e^{2\rho(t)}$ and using the fact that $T$ is monotone entails
	\begin{equation*}
		\frac{\d}{\d t}\big(\e^{2\rho(t)}\vartheta(t)\big)
		+\e^{2\rho(t)}\frac{\dot{\varepsilon}(t)}{2}\frac{\d}{\d t}
		\norm{(x(t),\lambda(t))}^{2}\leq0.
	\end{equation*}
	In view of the Cauchy--Schwarz inequality, we obtain
	\begin{equation*}
		\frac{\d}{\d t}\big(\e^{2\rho(t)}\vartheta(t)\big)\leq
		\sqrt{2}\e^{\rho(t)}\abs{\dot{\varepsilon}(t)}\norm{(x(t),\lambda(t))}
		\sqrt{\e^{2\rho(t)}\vartheta(t)}.
	\end{equation*}
	Integrating over $[t_{+},t]$ for some fixed $t_{+}\geq t_{0}$ and noting that $(x(t),\lambda(t))$ remains bounded on $[t_{0},+\infty[$, we find that there exists $K\geq0$ such that
	\begin{equation*}
		\e^{2\rho(t)}\vartheta(t)\leq\e^{2\rho(t_{+})}\vartheta(t_{+})
		+\sqrt{2}K\int_{t_{+}}^{t}\e^{\rho(\tau)}\abs{\dot{\varepsilon}(\tau)}
		\sqrt{\e^{2\rho(\tau)}\vartheta(\tau)}\d\tau.
	\end{equation*}
	Successively applying Lemma \ref{app:lm:gtineq} and dividing by $\e^{\rho(t)}$ yields
	\begin{align*}
		\sqrt{\vartheta(t)}&\leq\e^{-(\rho(t)-\rho(t_{+}))}\sqrt{\vartheta(t_{+})}
		+\frac{K}{\sqrt{2}}\e^{-\rho(t)}\int_{t_{+}}^{t}\e^{\rho(\tau)}
		\abs{\dot{\varepsilon}(\tau)}\d\tau\\
		&\leq\e^{-(\rho(t)-\rho(t_{+}))}\sqrt{\vartheta(t_{+})}
		+\frac{K}{\sqrt{2}}\int_{t_{+}}^{t}\abs{\dot{\varepsilon}(\tau)}\d\tau.
	\end{align*}
	Now, since we have both $\varepsilon\notin\LL^{1}([t_{0},+\infty[)$ and $\dot{\varepsilon}\in\LL^{1}([t_{0},+\infty[)$, passing to the upper limit as $t\to+\infty$ entails
	\begin{equation*}
		\limsup_{t\to+\infty}\sqrt{\vartheta(t)}\leq\frac{K}{\sqrt{2}}
		\int_{t_{+}}^{\infty}\abs{\dot{\varepsilon}(\tau)}\d\tau.
	\end{equation*}
	This inequality being true for every $t_{+}\geq t_{0}$, letting $t_{+}\to+\infty$ ensures that $\vartheta(t)$ tends to zero as $t\to+\infty$.

	Let us now consider the case when $\abs{\dot{\varepsilon}}^{2}/\varepsilon\in\LL^{1}([t_{0},+\infty[)$. Multiplying equality \eqref{sec3:csinequ} by $\e^{\rho(t)}$ and using again the fact that $T$ is monotone gives
	\begin{equation*}
		\frac{\d}{\d t}\big(\e^{\rho(t)}\vartheta(t)\big)
		+\e^{\rho(t)}\varepsilon(t)\vartheta(t)
		+\e^{\rho(t)}\frac{\dot{\varepsilon}(t)}{2}\frac{\d}{\d t}
		\norm{(x(t),\lambda(t))}^{2}\leq0.
	\end{equation*}
	Upon applying the Cauchy--Schwarz inequality, we infer
	\begin{equation*}
		\frac{\d}{\d t}\big(\e^{\rho(t)}\vartheta(t)\big)
		\leq\e^{\rho(t)}\frac{\abs{\dot{\varepsilon}(t)}^{2}}{2\varepsilon(t)}
		\norm{(x(t),\lambda(t))}^{2}.
	\end{equation*}
	Integrating over $[t_{+},t]$ for some fixed $t_{+}\geq t_{0}$ and using again that $(x(t),\lambda(t))$ remains bounded on $[t_{0},+\infty[$, we find that there exists $K\geq0$ such that
	\begin{align*}
		\vartheta(t)&\leq\e^{-(\rho(t)-\rho(t_{+}))}\vartheta(t_{+})
		+\frac{K}{2}\e^{-\rho(t)}\int_{t_{+}}^{t}\e^{\rho(\tau)}
		\frac{\abs{\dot{\varepsilon}(\tau)}^{2}}{\varepsilon(\tau)}\d\tau\\
		&\leq\e^{-(\rho(t)-\rho(t_{+}))}\vartheta(t_{+})+\frac{K}{2}\int_{t_{+}}^{t}
		\frac{\abs{\dot{\varepsilon}(\tau)}^{2}}{\varepsilon(\tau)}\d\tau.
	\end{align*}
	Observing now that $\varepsilon\notin\LL^{1}([t_{0},+\infty[)$ and $\abs{\dot{\varepsilon}}^{2}/\varepsilon\in\LL^{1}([t_{0},+\infty[)$, we conclude that $\vartheta(t)$ vanishes as $t\to+\infty$.

	Finally, in view of inequality \eqref{sec3:ttltineq} and the regularized Lagrangian $L_{t}$, for every $(\bar{x},\bar{\lambda})\in S\times M$ and $t\geq t_{0}$, we have
	\begin{gather*}
		\ip{T_{t}(x(t),\lambda(t))}{(x(t),\lambda(t))-(\bar{x},\bar{\lambda})}
		+\frac{\varepsilon(t)}{2}\norm{(\bar{x},\bar{\lambda})}^{2}\\
		\geq L(x(t),\bar{\lambda})-L(\bar{x},\lambda(t)).
	\end{gather*}
	Using again that $(x(t),\lambda(t))$ remains bounded on $[t_{0},+\infty[$, and owing to the fact that $T_{t}(x(t),\lambda(t))\to(0,0)$ strongly in $X\times Y$ as $t\to+\infty$ under each of the above conditions on $\varepsilon(t)$, passing to the limit entails 
	\begin{equation*}
		\lim_{t\to+\infty}\big(L(x(t),\bar{\lambda})-L(\bar{x},\lambda(t))\big)=0,
	\end{equation*}
	concluding the result.
\end{proof}

\begin{remark}
	Let us compare the estimates derived in the proof of Theorem \ref{sec3:th:zerogap} in the ``limiting case'' when $\varepsilon(t)=1/t$ with $t_{0}>0$. On the one hand, for every $t\geq t_{0}$, we have
	\begin{align*}
		\sqrt{\vartheta(t)}&\leq\e^{-\rho(t)}\sqrt{\vartheta(t_{0})}
		+\frac{K}{\sqrt{2}}\e^{-\rho(t)}\int_{t_{0}}^{t}\e^{\rho(\tau)}
		\abs{\dot{\varepsilon}(\tau)}\d\tau\\
		&=\sqrt{\vartheta(t_{0})}\frac{t_{0}}{t}
		+\frac{K}{\sqrt{2}t}\ln\Big(\frac{t}{t_{0}}\Big).
	\end{align*}
	Consequently, $\vartheta(t)$ obeys the asymptotic estimate
	\begin{equation*}
		\vartheta(t)=\OO\Big(\frac{\ln(t)^{2}}{t^{2}}\Big)\ \text{as $t\to+\infty$}.
	\end{equation*}

	On the other hand, for every $t\geq t_{0}$, we have
	\begin{align*}
		\vartheta(t)&\leq\e^{-\rho(t)}\vartheta(t_{0})
		+\frac{K}{2}\e^{-\rho(t)}\int_{t_{0}}^{t}\e^{\rho(\tau)}
		\frac{\abs{\dot{\varepsilon}(\tau)}^{2}}{\varepsilon(\tau)}\d\tau\\
		&=\vartheta(t_{0})\frac{t_{0}}{t}
		+\frac{K}{2t^{2}}\Big(\frac{t}{t_{0}}-1\Big).
	\end{align*}
	In this case, we obtain the comparable decay rate estimate
	\begin{equation*}
		\vartheta(t)=\OO\Big(\frac{1}{t}\Big)\ \text{as $t\to+\infty$}.
	\end{equation*}
\end{remark}

\begin{remark}
	We note that the above result under the condition $\dot{\varepsilon}\in\LL^{1}([t_{0},+\infty[)$ has already been established by Cominetti et al.~\cite[Theorem 9]{RC-JP-SS:08}) using a similar line of arguments. In the recent work of Bo\c{t} and Nguyen \cite{RIB-DKN:24} it has been shown that $(\dot{x}(t),\dot{\lambda}(t))$ also tends to zero as $t\to+\infty$ whenever
	\begin{equation*}
		\lim_{t\to+\infty}\frac{\abs{\dot{\varepsilon}(t)}}{\varepsilon(t)}=0.
	\end{equation*}
\end{remark}

We are now in the position to assert the strong convergence of the solutions of the \eqref{sec1:arrhurtik} differential system.
\begin{proposition}\label{sec3:pr:strongconv}
	Let $S\times M$ be non-empty, let $(x,\lambda):[t_{0},+\infty[\ \to X\times Y$ be a solution of \eqref{sec1:arrhurtik}, and suppose that $\varepsilon\notin\LL^{1}([t_{0},+\infty[)$ with either $\dot{\varepsilon}\in\LL^{1}([t_{0},+\infty[)$ or $\abs{\dot{\varepsilon}}^{2}/\varepsilon\in\LL^{1}([t_{0},+\infty[)$. Then it holds that
	\begin{equation*}
		\lim_{t\to+\infty}(x(t),\lambda(t))=\proj_{S\times M}(0,0).
	\end{equation*}
\end{proposition}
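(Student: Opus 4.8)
The plan is to combine the estimate of Corollary~\ref{sec3:co:limsup} with a weak-cluster-point argument modeled on the proof of Lemma~\ref{sec2:lm:vcgeo}~\emph{(ii)}. Since $\varepsilon\notin\LL^{1}([t_{0},+\infty[)$, Corollary~\ref{sec3:co:limsup} is available, and by Remark~\ref{sec3:rm:liminf} it suffices to establish the single norm inequality
\begin{equation*}
	\liminf_{t\to+\infty}\,\norm{(x(t),\lambda(t))}\geq\norm{\proj_{S\times M}(0,0)};
\end{equation*}
anchoring Corollary~\ref{sec3:co:limsup} to $(\bar{x},\bar{\lambda})=\proj_{S\times M}(0,0)$ then forces $\limsup_{t\to+\infty}\norm{(x(t),\lambda(t))-\proj_{S\times M}(0,0)}^{2}\leq0$, which is exactly the assertion.

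To prove the displayed inequality, I would first invoke Theorem~\ref{sec3:th:zerogap}: under the present hypotheses on $\varepsilon$ we have $\norm{(\dot{x}(t),\dot{\lambda}(t))}\to0$ as $t\to+\infty$. Rewriting \eqref{sec1:arrhurtik} as $T_{t}(x(t),\lambda(t))=-(\dot{x}(t),\dot{\lambda}(t))$, recalling that $(x(t),\lambda(t))$ is bounded on $[t_{0},+\infty[$ (Proposition~\ref{sec3:pr:boundedness}), and using $\varepsilon(t)\to0$, it follows that
\begin{equation*}
	T(x(t),\lambda(t))=-(\dot{x}(t),\dot{\lambda}(t))-\varepsilon(t)(x(t),\lambda(t))\longrightarrow(0,0)\quad\text{strongly in $X\times Y$ as $t\to+\infty$.}
\end{equation*}
Let now $(\hat{x},\hat{\lambda})$ be any weak sequential cluster point of $(x(t),\lambda(t))$, say $(x(t_{n}),\lambda(t_{n}))\rightharpoonup(\hat{x},\hat{\lambda})$ for some $t_{n}\to+\infty$ (such cluster points exist by boundedness). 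Since $T$ is maximally monotone, its graph is sequentially closed with respect to the weak-strong topology (see Br{\'e}zis \cite[Theorem 2.2]{HB:73}, Bauschke and Combettes \cite[Theorem 23.44]{HHB-PLC:17}); passing to the limit in $(x(t_{n}),\lambda(t_{n}))\rightharpoonup(\hat{x},\hat{\lambda})$ together with $T(x(t_{n}),\lambda(t_{n}))\to(0,0)$ therefore gives $(0,0)=T(\hat{x},\hat{\lambda})$, i.e., $(\hat{x},\hat{\lambda})\in\zer T=S\times M$. (Alternatively, one may avoid invoking maximal monotonicity and repeat the convex-analytic argument of Lemma~\ref{sec2:lm:vcgeo}~\emph{(ii)}: the subdifferential inequality for $f$ together with \eqref{sec1:arrhurtik} yields $L(x(t),\lambda)-L(x,\lambda(t))\leq-\ip{(\dot{x}(t),\dot{\lambda}(t))+\varepsilon(t)(x(t),\lambda(t))}{(x(t),\lambda(t))-(x,\lambda)}$ for every $(x,\lambda)\in X\times Y$, whose right-hand side tends to $0$ along $t_{n}$, so that the weak lower semi-continuity of $L(\,\cdot\,,\lambda)$ and of $-L(x,\,\cdot\,)$ forces $(\hat{x},\hat{\lambda})$ to be a saddle point of $L$.)

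It remains to close the loop. By weak lower semi-continuity of $\norm{\,\cdot\,}$ we get $\norm{(\hat{x},\hat{\lambda})}\leq\liminf_{n}\norm{(x(t_{n}),\lambda(t_{n}))}$, while $(\hat{x},\hat{\lambda})\in S\times M$ forces $\norm{\proj_{S\times M}(0,0)}\leq\norm{(\hat{x},\hat{\lambda})}$, since $\proj_{S\times M}(0,0)$ is the element of least norm in $S\times M$. Choosing $t_{n}\to+\infty$ along a subsequence realizing $\liminf_{t\to+\infty}\norm{(x(t),\lambda(t))}$ (and extracting a weakly convergent further subsequence) yields the displayed inequality, and the claim follows from Corollary~\ref{sec3:co:limsup} as indicated, together with the standard ``weak convergence plus convergence of the norms implies strong convergence'' principle already used at the end of Lemma~\ref{sec2:lm:vcgeo}.

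I expect the main obstacle to be the identification of weak cluster points of $(x(t),\lambda(t))$ with saddle points of $L$. This is precisely where the convergence $(\dot{x}(t),\dot{\lambda}(t))\to0$ from Theorem~\ref{sec3:th:zerogap} is indispensable, as it is what makes the trajectory asymptotically solve $T(x,\lambda)=(0,0)$, and where one must bring in either the weak-strong closedness of the graph of the maximally monotone operator $T$ or the Lagrangian lower-semicontinuity argument; the remaining steps are a routine assembly of boundedness, Corollary~\ref{sec3:co:limsup}, and the weak-to-strong convergence principle.
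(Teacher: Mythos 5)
Your proposal is correct and follows essentially the same route as the paper: reduce the claim via Corollary~\ref{sec3:co:limsup} and Remark~\ref{sec3:rm:liminf} to the inequality $\liminf_{t\to+\infty}\norm{(x(t),\lambda(t))}\geq\norm{\proj_{S\times M}(0,0)}$, use Theorem~\ref{sec3:th:zerogap} to identify weak sequential cluster points of the trajectory with elements of $S\times M$, and conclude by weak lower semi-continuity of the norm. The only cosmetic difference is that you identify cluster points primarily through the weak--strong graph-closedness of the maximally monotone operator $T$ (an alternative the paper itself notes after Lemma~\ref{sec2:lm:vcgeo}), whereas the paper's proof of this proposition uses the Lagrangian lower-semicontinuity argument you give in parentheses; both are valid and interchangeable here.
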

\begin{proof}
	In view of Corollary \ref{sec3:co:limsup} (see also Remark \ref{sec3:rm:liminf}), it suffices to show that
	\begin{equation*}
		\liminf_{t\to+\infty}\,\norm{(x(t),\lambda(t))}
		\geq\norm{\proj_{S\times M}(0,0)}.
	\end{equation*}
	Let $(x,\lambda)\in X\times Y$ and suppose that $(x(t_{n}),\lambda(t_{n}))\rightharpoonup(\bar{x},\bar{\lambda})$ weakly in $X\times Y$, as $n\to+\infty$, for a sequence $t_{n}\to+\infty$. By virtue of Theorem \ref{sec3:th:zerogap}, we have
	\begin{align*}
		0&=\lim_{n\to+\infty}
		\big(L(x(t_{n}),\lambda)-L(x,\lambda(t_{n}))\big)\\
		&\geq\liminf_{n\to+\infty}\,L(x(t_{n}),\lambda)
		+\liminf_{n\to+\infty}\,(-L(x,\lambda(t_{n})))\\
		&\geq L(\bar{x},\lambda)-L(x,\bar{\lambda})
	\end{align*}
	thanks to the weak lower semi-continuity of $L(\,\cdot\,,\lambda)$ and $-L(x,\,\cdot\,)$. The above inequalities being true for every $(x,\lambda)\in X\times Y$, we conclude that $(\bar{x},\bar{\lambda})\in S\times M$.

	On the other hand, the weak lower semi-continuity of the norm $\norm{\,\cdot\,}$ ensures
	\begin{equation*}
		\liminf_{n\to+\infty}\,\norm{(x(t_{n}),\lambda(t_{n}))}
		\geq\norm{(\bar{x},\bar{\lambda})}.
	\end{equation*}
	This inequality being true for every $(\bar{x},\bar{\lambda})\in S\times M$, taking the minimum over $S\times M$ yields the desired conclusion.
\end{proof}

Let us next provide a strong convergence result for the solutions of \eqref{sec1:arrhurtik} under the assumption that $\varepsilon(t)$ is twice continuously differentiable with
\begin{equation*}
	\lim_{t\to+\infty}\varepsilon(t)=0.
\end{equation*}
\begin{proposition}\label{sec3:pr:strongconv2}
	Let $S\times M$ be non-empty, let $(x,\lambda):[t_{0},+\infty[\ \to X\times Y$ be a solution of \eqref{sec1:arrhurtik}, and let $\varepsilon:[t_{0},+\infty[\ \to\ ]0,+\infty[$ be such that
	\begin{align*}
		\begin{rcases}
			\begin{aligned}
				\varepsilon^{2}(t)+\dot{\varepsilon}(t)&\geq0\\
				2\varepsilon(t)\dot{\varepsilon}(t)+\ddot{\varepsilon}(t)&\leq0
				\hspace{1pt}
			\end{aligned}
		\end{rcases}\quad\forall t\geq t_{+}
	\end{align*}
	for some $t_{+}\geq t_{0}$. Then it holds that
	\begin{equation*}
		\lim_{t\to+\infty}(x(t),\lambda(t))=\proj_{S\times M}(0,0).
	\end{equation*}
\end{proposition}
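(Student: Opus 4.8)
The plan is to follow the template of the proof of Proposition~\ref{sec3:pr:strongconv}: by Corollary~\ref{sec3:co:limsup} together with Remark~\ref{sec3:rm:liminf}, once $\varepsilon\notin\LL^{1}([t_{0},+\infty[)$ is known it suffices to establish the lower bound
\begin{equation*}
	\liminf_{t\to+\infty}\,\norm{(x(t),\lambda(t))}\geq\norm{\proj_{S\times M}(0,0)}.
\end{equation*}
The integrability statement is immediate from the hypotheses: for $t\geq t_{+}$ the inequality $\varepsilon^{2}(t)+\dot{\varepsilon}(t)\geq0$ can be read as $\frac{\d}{\d t}(1/\varepsilon(t))=-\dot{\varepsilon}(t)/\varepsilon^{2}(t)\leq1$, whence $1/\varepsilon(t)\leq1/\varepsilon(t_{+})+(t-t_{+})$ and thus $\varepsilon(t)\geq\bigl(1/\varepsilon(t_{+})+(t-t_{+})\bigr)^{-1}$, which is not integrable on $[t_{+},+\infty[$; in particular $\rho(t)\to+\infty$ as $t\to+\infty$.

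The only new ingredient needed under these weaker hypotheses on $\varepsilon$ is the convergence $\norm{(\dot{x}(t),\dot{\lambda}(t))}\to0$, equivalently (via \eqref{sec1:arrhurtik}) $T_{t}(x(t),\lambda(t))\to(0,0)$ strongly in $X\times Y$. I would deduce this from the identity \eqref{sec3:csinequ} with $\vartheta(t)=\norm{(\dot{x}(t),\dot{\lambda}(t))}^{2}/2$: dropping the nonnegative term $\ip{\frac{\d}{\d t}T(x(t),\lambda(t))}{(\dot{x}(t),\dot{\lambda}(t))}$ coming from the monotonicity of $T$, multiplying by $\e^{2\rho(t)}$, and rewriting the sign-indefinite term (here the $C^{2}$-regularity of $\varepsilon$ is used)
\begin{equation*}
	\e^{2\rho(t)}\frac{\dot{\varepsilon}(t)}{2}\frac{\d}{\d t}\norm{(x(t),\lambda(t))}^{2}
	=\frac{\d}{\d t}\Bigl(\e^{2\rho(t)}\frac{\dot{\varepsilon}(t)}{2}\norm{(x(t),\lambda(t))}^{2}\Bigr)
	-\frac{\e^{2\rho(t)}}{2}\bigl(2\varepsilon(t)\dot{\varepsilon}(t)+\ddot{\varepsilon}(t)\bigr)\norm{(x(t),\lambda(t))}^{2},
\end{equation*}
one obtains, using $2\varepsilon(t)\dot{\varepsilon}(t)+\ddot{\varepsilon}(t)\leq0$ for $t\geq t_{+}$, that
\begin{equation*}
	\frac{\d}{\d t}\Bigl(\e^{2\rho(t)}\bigl(\vartheta(t)+\tfrac{\dot{\varepsilon}(t)}{2}\norm{(x(t),\lambda(t))}^{2}\bigr)\Bigr)\leq0\quad\text{a.e.\ }t\geq t_{+}.
\end{equation*}
Integrating over $[t_{+},t]$, invoking the boundedness of $(x(t),\lambda(t))$ from Proposition~\ref{sec3:pr:boundedness}, and using $-\dot{\varepsilon}(t)\leq\varepsilon^{2}(t)$ (the first condition again), this yields $\vartheta(t)=\OO\bigl(\e^{-2\rho(t)}+\varepsilon^{2}(t)\bigr)$, hence $\vartheta(t)\to0$ since $\rho(t)\to+\infty$ and $\varepsilon(t)\to0$.

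With $T_{t}(x(t),\lambda(t))\to(0,0)$ at hand, the conclusion proceeds verbatim as in Proposition~\ref{sec3:pr:strongconv}. Indeed, the final step of the proof of Theorem~\ref{sec3:th:zerogap} only uses $T_{t}(x(t),\lambda(t))\to(0,0)$ and the boundedness of $(x(t),\lambda(t))$ — not the specific hypotheses on $\varepsilon$ made there — and, combined with inequality \eqref{sec3:ttltineq} and $\varepsilon(t)\to0$, it gives $\limsup_{t\to+\infty}\bigl(L(x(t),\lambda)-L(x,\lambda(t))\bigr)\leq0$ for every $(x,\lambda)\in X\times Y$. Consequently, by convexity-concavity and weak lower semi-continuity of $L(\,\cdot\,,\lambda)$ and $-L(x,\,\cdot\,)$, every weak sequential cluster point $(\bar{x},\bar{\lambda})$ of $(x(t),\lambda(t))$ is a saddle point of $L$, and weak lower semi-continuity of the norm forces $\liminf_{t\to+\infty}\norm{(x(t),\lambda(t))}\geq\norm{(\bar{x},\bar{\lambda})}\geq\norm{\proj_{S\times M}(0,0)}$. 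Anchoring Corollary~\ref{sec3:co:limsup} at $\proj_{S\times M}(0,0)$ then gives $\limsup_{t\to+\infty}\norm{(x(t),\lambda(t))-\proj_{S\times M}(0,0)}^{2}\leq0$, which is the claim.

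The main obstacle is the velocity estimate of the second paragraph, specifically the bookkeeping of the term $\frac{\dot{\varepsilon}}{2}\frac{\d}{\d t}\norm{(x,\lambda)}^{2}$: this is precisely where both structural assumptions on $\varepsilon$ are needed — $2\varepsilon\dot{\varepsilon}+\ddot{\varepsilon}\leq0$ to render the $\e^{2\rho}$-weighted energy nonincreasing, and $\varepsilon^{2}+\dot{\varepsilon}\geq0$ both to absorb the residual term $\tfrac{\dot{\varepsilon}(t)}{2}\norm{(x(t),\lambda(t))}^{2}$ at time $t$ and, separately, to secure $\varepsilon\notin\LL^{1}([t_{0},+\infty[)$.
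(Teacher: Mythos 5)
Your proof is correct, and it differs from the paper's in how the conclusion is drawn. The paper reduces the statement to two facts — $\varepsilon\notin\LL^{1}$ (shown exactly as you do, by integrating $-\dot{\varepsilon}/\varepsilon^{2}\leq1$) and the inclusion of all weak sequential cluster points in $S\times M$ — and then invokes the abstract result of Cominetti, Peypouquet and Sorin (their Proposition 6) to obtain strong convergence to the least-norm element; moreover, it borrows the velocity decay $(\dot{x}(t),\dot{\lambda}(t))\to(0,0)$ by a forward reference to Proposition \ref{sec4:pr:velrate}. You instead stay entirely inside the paper's own toolkit: you conclude via Corollary \ref{sec3:co:limsup} and Remark \ref{sec3:rm:liminf} anchored at $\proj_{S\times M}(0,0)$ (the same route as Proposition \ref{sec3:pr:strongconv}), and you prove the velocity estimate in place by a compact energy argument on $\e^{2\rho(t)}\bigl(\vartheta(t)+\tfrac{\dot{\varepsilon}(t)}{2}\norm{(x(t),\lambda(t))}^{2}\bigr)$ — essentially a lighter version of the computation underlying Theorem \ref{sec4:th:convrate}/Proposition \ref{sec4:pr:velrate} (and parallel to the viscosity-curve lemma in Section \ref{sec2}), using $2\varepsilon\dot{\varepsilon}+\ddot{\varepsilon}\leq0$ for monotonicity of the weighted energy and $\varepsilon^{2}+\dot{\varepsilon}\geq0$ to absorb the residual term; your identity and the dropping of the nonnegative term $\ip{\frac{\d}{\d t}T(x(t),\lambda(t))}{(\dot{x}(t),\dot{\lambda}(t))}$ are both sound, and the cluster-point step via \eqref{sec3:ttltineq} with arbitrary $(\xi,\eta)$ works since the extra term $\tfrac{\varepsilon(t)}{2}\norm{(\xi,\eta)}^{2}$ vanishes. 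What each approach buys: yours is fully self-contained (no external convergence theorem, no forward reference to Section \ref{sec4}) at the cost of duplicating, in weaker form, an estimate the paper establishes later anyway; the paper's proof is shorter by delegation but logically leans on material proved afterwards and on the cited reference.
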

\begin{proof}
	By virtue of Cominetti et al.~\cite[Proposition 6]{RC-JP-SS:08}, it suffices to show that $\varepsilon\notin\LL^{1}([t_{+},+\infty[)$ and that all weak sequential cluster points of $(x(t),\lambda(t))_{t\geq t_{0}}$ belong to the set $S\times M$. Let $t_{+}\geq t_{0}$ be such that $1\geq-\dot{\varepsilon}(t)/\varepsilon^{2}(t)$ for all $t\geq t_{+}$. An immediate integration over $[t_{+},t]$ yields
	\begin{equation*}
		t-t_{+}\geq\frac{1}{\varepsilon(t)}-\frac{1}{\varepsilon(t_{+})}.
	\end{equation*}
	Integrating again and passing to the limit as $t\to+\infty$ gives
	\begin{equation*}
		\int_{t_{+}}^{\infty}\varepsilon(\tau)\d\tau\geq
		\int_{t_{+}}^{\infty}\frac{1}{\tau-t_{+}+\frac{1}{\varepsilon(t_{+})}}
		\d\tau=+\infty
	\end{equation*}
	so that $\varepsilon\notin\LL^{1}([t_{+},+\infty[)$.

	Let $(x,\lambda)\in X\times Y$ and suppose now that $(x(t_{n}),\lambda(t_{n}))\rightharpoonup(\bar{x},\bar{\lambda})$ weakly in $X\times Y$, as $n\to+\infty$, for a sequence $t_{n}\to+\infty$. Since $(x(t),\lambda(t))$ is bounded on $[t_{0},+\infty[$ and $(\dot{x}(t),\dot{\lambda}(t))\to(0,0)$ strongly in $X\times Y$ as $t\to+\infty$ (as it will be justified later in Proposition \ref{sec4:pr:velrate}), it follows from inequality \eqref{sec3:ttltineq} together with the regularized Lagrangian $L_{t}$ that
	\begin{align*}
		0&\geq\limsup_{n\to+\infty}
		\big(L(x_{t_{n}},\lambda)-L(x,\lambda_{t_{n}})\big)\\
		&\geq\liminf_{n\to+\infty}\,L(x_{t_{n}},\lambda)
		+\liminf_{n\to+\infty}\,(-L(x,\lambda_{t_{n}}))\\
		&\geq L(\bar{x},\lambda)-L(x,\bar{\lambda}),
	\end{align*}
	where we again utilized the weak lower semi-continuity of $L(\,\cdot\,,\lambda)$ and $-L(x,\,\cdot\,)$. The above derivations being true for every $(x,\lambda)\in X\times Y$, we conclude that $(\bar{x},\bar{\lambda})$ is a saddle point of $L$, that is, $(\bar{x},\bar{\lambda})\in S\times M$.
\end{proof}

\begin{remark}
	Under the hypotheses of Proposition \ref{sec3:pr:strongconv} or Proposition \ref{sec3:pr:strongconv2}, for every $(\xi,\eta)\in X\times Y$, we immediately observe that the solutions $(x,\lambda):[t_{0},+\infty[\ \to X\times Y$ of the nonautonomous differential system
	\begin{equation*}
		\begin{cases}
			\begin{aligned}
				\dot{x}+\nabla f(x)+A^{\ast}\lambda+\varepsilon(t)(x-\xi)&=0\\
				\dot{\lambda}+b-Ax+\varepsilon(t)(\lambda-\eta)&=0
			\end{aligned}
		\end{cases}
	\end{equation*}
	strongly converge toward $\proj_{S\times M}(\xi,\eta)$ as $t\to+\infty$. We leave the details to the reader.
\end{remark}

%% CONVERGENCE RATE ESTiMATES
\section{Convergence rate estimates}\label{sec4}
In this section, we aim at deriving fast convergence rate estimates for the \eqref{sec1:arrhurtik} solutions. To this end, we again restrict the class of Tikhonov regularization functions by replacing assumption (A4) with the condition
\begin{enumerate}[\hspace{6pt}({A}4)$^{\prime}$]
	\item $\varepsilon:[t_{0},+\infty[\ \to \ ]0,+\infty[$ is twice
		continuously differentiable such that
	\begin{equation*}
		\lim_{t\to+\infty}\varepsilon(t)=0.
	\end{equation*}
\end{enumerate}

\subsection{Asymptotics relative to the set of zeros}
Let us begin our discussion by deriving fast decay rate estimates for the solutions of the \eqref{sec1:arrhurtik} differential system with respect to its set of zeros. The following result is based on the assumption that the Tikhonov regularization function $\varepsilon:[t_{0},+\infty[\ \to \ ]0,+\infty[$ verifies the decisive conditions
\begin{align*}
	\begin{rcases}
		\begin{aligned}
			\varepsilon^{2}(t)+\dot{\varepsilon}(t)&\geq0\\
			2\varepsilon(t)\dot{\varepsilon}(t)+\ddot{\varepsilon}(t)&\leq0
			\hspace{1pt}
		\end{aligned}
	\end{rcases}\quad\forall t\geq t_{+}
\end{align*}
for some $t_{+}\geq t_{0}$.
\begin{theorem}\label{sec4:th:convrate}
	Let $S\times M$ be non-empty, let $(x,\lambda):[t_{0},+\infty[\ \to X\times Y$ be a solution of \eqref{sec1:arrhurtik}, and let $\varepsilon:[t_{0},+\infty[\ \to\ ]0,+\infty[$ be such that
	\begin{align*}
		\begin{rcases}
			\begin{aligned}
				\varepsilon^{2}(t)+\dot{\varepsilon}(t)&\geq0\\
				2\varepsilon(t)\dot{\varepsilon}(t)+\ddot{\varepsilon}(t)&\leq0
				\hspace{1pt}
			\end{aligned}
		\end{rcases}\quad\forall t\geq t_{+}
	\end{align*}
	for some $t_{+}\geq t_{0}$. Then, for every $(\bar{x},\bar{\lambda})\in S\times M$, the following assertions hold:
	\begin{align*}
		\norm{(\dot{x}(t),\dot{\lambda}(t))+\varepsilon(t)((x(t),\lambda(t))
		-(\bar{x},\bar{\lambda}))}^{2}&=
		\OO\big(\e^{-2\rho(t)}+\,\varepsilon^{2}(t)\big)\
		\text{as $t\to+\infty$};\\
		\varepsilon(t)\big(L(x(t),\bar{\lambda})-L(\bar{x},\lambda(t))\big)&=
		\OO\big(\e^{-2\rho(t)}+\,\varepsilon^{2}(t)\big)\
		\text{as $t\to+\infty$};\\
		\norm{T(x(t),\lambda(t))-T(\bar{x},\bar{\lambda})}^{2}&=
		\OO\big(\e^{-2\rho(t)}+\,\varepsilon^{2}(t)\big)\
		\text{as $t\to+\infty$}.
	\end{align*}
\end{theorem}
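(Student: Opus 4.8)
The plan is to reduce the three assertions to a single velocity estimate
\[
	\norm{(\dot{x}(t),\dot{\lambda}(t))}^{2}=\OO\big(\e^{-2\rho(t)}+\varepsilon^{2}(t)\big)\quad\text{as $t\to+\infty$},
\]
and then to prove the latter by an energy argument exploiting both of the hypotheses on $\varepsilon$. Writing $z(t)=(x(t),\lambda(t))$ and $\bar{z}=(\bar{x},\bar{\lambda})\in S\times M$, so that $T(\bar{z})=(0,0)$, the system \eqref{sec1:arrhurtik} reads $\dot{z}(t)+T(z(t))+\varepsilon(t)z(t)=(0,0)$. Hence $\dot{z}(t)+\varepsilon(t)(z(t)-\bar{z})=-\big(T(z(t))-T(\bar{z})\big)-\varepsilon(t)\bar{z}$ and $T(z(t))-T(\bar{z})=-\dot{z}(t)-\varepsilon(t)z(t)$; since $z(t)$ stays bounded on $[t_{0},+\infty[$ by Proposition \ref{sec3:pr:boundedness}, the elementary bound $\norm{a+b}^{2}\le2\norm{a}^{2}+2\norm{b}^{2}$ shows that the first and the third estimates follow at once from the velocity estimate. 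For the second estimate I would anchor inequality \eqref{sec3:bdconvineq} at $(\bar{x},\bar{\lambda})$ and discard the nonnegative terms $\varepsilon(t)\phi(t)$ and $\tfrac{\varepsilon(t)}{2}\norm{z(t)}^{2}$, with $\phi(t)=\tfrac12\norm{z(t)-\bar{z}}^{2}$, to get $L(x(t),\bar{\lambda})-L(\bar{x},\lambda(t))\le\tfrac{\varepsilon(t)}{2}\norm{(\bar{x},\bar{\lambda})}^{2}-\dot{\phi}(t)$; multiplying by $\varepsilon(t)$ and using Young's inequality in the form $-\varepsilon(t)\dot{\phi}(t)\le\varepsilon(t)\norm{\dot z(t)}\,\norm{z(t)-\bar z}\le\tfrac12\varepsilon^{2}(t)\norm{z(t)-\bar z}^{2}+\tfrac12\norm{\dot z(t)}^{2}$ together with the boundedness of $z$, the velocity estimate, and $L(x(t),\bar{\lambda})-L(\bar{x},\lambda(t))\ge0$, one obtains $\varepsilon(t)\big(L(x(t),\bar{\lambda})-L(\bar{x},\lambda(t))\big)=\OO(\e^{-2\rho(t)}+\varepsilon^{2}(t))$.

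To prove the velocity estimate, set $\vartheta(t)=\tfrac12\norm{(\dot{x}(t),\dot{\lambda}(t))}^{2}$. Differentiating \eqref{sec1:arrhurtik} and taking the inner product with $(\dot{x}(t),\dot{\lambda}(t))$ gives, as in \eqref{sec3:csinequ}, for almost every $t\ge t_{0}$,
\[
	\dot{\vartheta}(t)+2\varepsilon(t)\vartheta(t)+\Big\langle\frac{\d}{\d t}T(z(t)),\dot{z}(t)\Big\rangle+\frac{\dot{\varepsilon}(t)}{2}\frac{\d}{\d t}\norm{z(t)}^{2}=0,
\]
where $t\mapsto T(z(t))$ is locally Lipschitz—hence differentiable a.e.—because $z$ is continuously differentiable and $T$ is Lipschitz on the bounded subsets of $X\times Y$. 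Dropping the nonnegative term $\langle\tfrac{\d}{\d t}T(z(t)),\dot{z}(t)\rangle$ (monotonicity of $T$) and rewriting $\tfrac{\dot{\varepsilon}}{2}\tfrac{\d}{\d t}\norm{z}^{2}=\tfrac12\tfrac{\d}{\d t}(\dot{\varepsilon}\norm{z}^{2})-\tfrac{\ddot{\varepsilon}}{2}\norm{z}^{2}$, then multiplying by $\e^{2\rho(t)}$ and regrouping, one arrives at
\[
	\frac{\d}{\d t}\Big(\e^{2\rho(t)}\big(\vartheta(t)+\tfrac12\dot{\varepsilon}(t)\norm{z(t)}^{2}\big)\Big)\le\frac12\e^{2\rho(t)}\norm{z(t)}^{2}\big(2\varepsilon(t)\dot{\varepsilon}(t)+\ddot{\varepsilon}(t)\big).
\]
By the second hypothesis the right-hand side is $\le0$ for $t\ge t_{+}$, so integrating over $[t_{+},t]$ yields a constant $K\ge0$ with $\vartheta(t)+\tfrac12\dot{\varepsilon}(t)\norm{z(t)}^{2}\le K\e^{-2\rho(t)}$. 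The first hypothesis gives $-\dot{\varepsilon}(t)\le\varepsilon^{2}(t)$ for $t\ge t_{+}$, whence $\vartheta(t)\le K\e^{-2\rho(t)}+\tfrac12\varepsilon^{2}(t)\norm{z(t)}^{2}$, and the boundedness of $z$ delivers the velocity estimate.

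The computations are routine once the reduction is in place; the main obstacle is the treatment of the cross term $\tfrac{\dot{\varepsilon}}{2}\tfrac{\d}{\d t}\norm{z}^{2}$. The key is to perform an integration by parts and to pair the result with the integrating factor $\e^{2\rho}$ so that the augmented energy $\e^{2\rho}\big(\vartheta+\tfrac12\dot{\varepsilon}\norm{z}^{2}\big)$ becomes nonincreasing for $t\ge t_{+}$ precisely under the two sign conditions $\varepsilon^{2}+\dot{\varepsilon}\ge0$ and $2\varepsilon\dot{\varepsilon}+\ddot{\varepsilon}\le0$; the first condition then also converts the residual term $-\tfrac12\dot{\varepsilon}\norm{z}^{2}$ into the admissible error $\OO(\varepsilon^{2})$, which is what produces the $\varepsilon^{2}$ contribution in place of the $\abs{\dot{\varepsilon}}$ contribution obtained by cruder arguments. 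The remaining care concerns justifying the displayed identity and the inequality $\langle\tfrac{\d}{\d t}T(z(t)),\dot{z}(t)\rangle\ge0$ in the almost-everywhere sense, in view of the limited regularity of $t\mapsto T(z(t))$; this is handled exactly as in Section \ref{sec3}.
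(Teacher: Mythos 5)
Your proof is correct, but it organizes the argument in the reverse order of the paper. The paper differentiates the anchored quantity $\psi(t)=\tfrac12\norm{(\dot{x}(t),\dot{\lambda}(t))+\varepsilon(t)((x(t),\lambda(t))-(\bar{x},\bar{\lambda}))}^{2}$, expands, multiplies by $\e^{2\rho(t)}$ and integrates to obtain the single master inequality \eqref{sec4:convrate} involving $\psi$, the pairing $\varepsilon(t)\ip{T(x(t),\lambda(t))}{(x(t),\lambda(t))-(\bar{x},\bar{\lambda})}$ and the $\sigma$-terms; all three assertions then drop out of that one inequality (the gap estimate via $\ip{T(x(t),\lambda(t))}{(x(t),\lambda(t))-(\bar{x},\bar{\lambda})}\geq L(x(t),\bar{\lambda})-L(\bar{x},\lambda(t))\geq0$, the operator estimate via the same expansion you use), and the plain velocity bound of Proposition \ref{sec4:pr:velrate} as well as the refined estimate of Proposition \ref{sec4:pr:ewma} are afterwards read off the same inequality. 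You instead prove the plain velocity bound first, by the augmented energy $\e^{2\rho(t)}\big(\vartheta(t)+\tfrac12\dot{\varepsilon}(t)\norm{(x(t),\lambda(t))}^{2}\big)$ built from \eqref{sec3:csinequ} --- essentially the argument of Lemma 2.4 transplanted from the viscosity curve to the \eqref{sec1:arrhurtik} trajectory --- and then recover the three assertions by elementary decompositions, the boundedness from Proposition \ref{sec3:pr:boundedness}, inequality \eqref{sec3:bdconvineq} and Young's inequality; there is no circularity, since your velocity estimate does not invoke the theorem. Your route is more modular and avoids the lengthy expansion of $\dot{\psi}$, at the price of an extra Young step for the Lagrangian gap and of losing the intermediate inequality \eqref{sec4:convrate}, which the paper reuses verbatim for Propositions \ref{sec4:pr:velrate}--\ref{sec4:pr:ewma}. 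Two small points of care, both fine as you indicate: the constant $K$ after integration should be taken as the positive part of the boundary term $\e^{2\rho(t_{+})}\big(\vartheta(t_{+})+\tfrac12\dot{\varepsilon}(t_{+})\norm{(x(t_{+}),\lambda(t_{+}))}^{2}\big)$, and the almost-everywhere differentiability of $t\mapsto T(x(t),\lambda(t))$ together with $\langle\tfrac{\d}{\d t}T(x(t),\lambda(t)),(\dot{x}(t),\dot{\lambda}(t))\rangle\geq0$ is justified exactly as in Section \ref{sec3}.
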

\begin{proof}
	Let $(\bar{x},\bar{\lambda})\in S\times M$ and define $\psi:[t_{0},+\infty[\ \to\RR$ by $\psi(t)=\norm{(\dot{x}(t),\dot{\lambda}(t))+\varepsilon(t)((x(t),\lambda(t))-(\bar{x},\bar{\lambda}))}^{2}/2$. Moreover, let $\sigma:[t_{0},+\infty[\ \to\RR$ be defined by $\sigma(t)=\varepsilon^{2}(t)+\dot{\varepsilon}(t)$ such that $\dot{\sigma}(t)=2\varepsilon(t)\dot{\varepsilon}(t)+\ddot{\varepsilon}(t)$. Differentiating $\psi(t)$ and taking \eqref{sec1:arrhurtik} into account yields, for almost every $t\geq t_{0}$,
	\begin{equation*}
		\dot{\psi}(t)+\Big\langle\frac{\d}{\d t}T(x(t),\lambda(t))
		+\dot{\varepsilon}(t)(\bar{x},\bar{\lambda}),(\dot{x}(t),\dot{\lambda}(t))
		+\varepsilon(t)((x(t),\lambda(t))-(\bar{x},\bar{\lambda}))\Big\rangle=0.
	\end{equation*}
	In view of an immediate expansion, we obtain
	\begin{gather*}
		\Big\langle\frac{\d}{\d t}T(x(t),\lambda(t))
		+\dot{\varepsilon}(t)(\bar{x},\bar{\lambda}),(\dot{x}(t),\dot{\lambda}(t))
		+\varepsilon(t)((x(t),\lambda(t))-(\bar{x},\bar{\lambda}))\Big\rangle\\
		+\,\big\langle T(x(t),\lambda(t))+\varepsilon(t)(\bar{x},\bar{\lambda}),
		(\dot{x}(t),\dot{\lambda}(t))
		+\varepsilon(t)((x(t),\lambda(t))-(\bar{x},\bar{\lambda}))\big\rangle\\
		+\,\dot{\psi}(t)+2\varepsilon(t)\psi(t)=0.
	\end{gather*}
	Utilizing the basic identity
	\begin{gather*}
		\frac{\d}{\d t}\big(\varepsilon(t)
		\ip{T(x(t),\lambda(t))}{(x(t),\lambda(t))-(\bar{x},\bar{\lambda})}\big)
		=\varepsilon(t)\ip{T(x(t),\lambda(t))}{(\dot{x}(t),\dot{\lambda}(t))}\\
		\hspace{-5pt}+\,\varepsilon(t)\Big\langle\frac{\d}{\d t}T(x(t),\lambda(t)),
		(x(t),\lambda(t))-(\bar{x},\bar{\lambda})\Big\rangle
		+\dot{\varepsilon}(t)\ip{T(x(t),\lambda(t))}
		{(x(t),\lambda(t))-(\bar{x},\bar{\lambda})}
	\end{gather*}
	together with the fact that
	\begin{gather*}
		\frac{\d}{\d t}\big(\dot{\varepsilon}(t)
		\ip{(x(t),\lambda(t))-(\bar{x},\bar{\lambda})}{(\bar{x},\bar{\lambda})}\big)
		=\ddot{\varepsilon}(t)\ip{(x(t),\lambda(t))-(\bar{x},\bar{\lambda})}
		{(\bar{x},\bar{\lambda})}\\
		+\,\dot{\varepsilon}(t)\frac{\d}{\d t}
		\ip{(x(t),\lambda(t))-(\bar{x},\bar{\lambda})}{(\bar{x},\bar{\lambda})},
	\end{gather*}
%	in view of the simple expansion
%	\begin{gather*}
%		\ip{(x(t),\lambda(t))-(\bar{x},\bar{\lambda})}{(\bar{x},\bar{\lambda})}
%		=\frac{1}{2}\norm{(x(t),\lambda(t))}^{2}
%		-\frac{1}{2}\norm{(\bar{x},\bar{\lambda})}^{2}\\
%		-\,\frac{1}{2}\norm{(x(t),\lambda(t))-(\bar{x},\bar{\lambda})}^{2},
%	\end{gather*}
	the above equality reads as
	\begin{gather*}
		\frac{\d}{\d t}\Big(\psi(t)+\varepsilon(t)\ip{T(x(t),\lambda(t))}
		{(x(t),\lambda(t))-(\bar{x},\bar{\lambda})}
		+\frac{\sigma(t)}{2}\norm{(x(t),\lambda(t))}^{2}
		-\frac{\varepsilon^{2}(t)}{2}\norm{(\bar{x},\bar{\lambda})}^{2}\Big)\\
		\hspace{-12pt}+\,2\varepsilon(t)\Big(\psi(t)+\varepsilon(t)
		\ip{T(x(t),\lambda(t))}{(x(t),\lambda(t))-(\bar{x},\bar{\lambda})}
		+\frac{\sigma(t)}{2}\norm{(x(t),\lambda(t))}^{2}
		-\frac{\varepsilon^{2}(t)}{2}\norm{(\bar{x},\bar{\lambda})}^{2}\Big)\\
		+\,\Big\langle\frac{\d}{\d t}T(x(t),\lambda(t)),
		(\dot{x}(t),\dot{\lambda}(t))\Big\rangle
		-\,\frac{\dot{\sigma}(t)}{2}\norm{(x(t),\lambda(t))}^{2}=0.
	\end{gather*}
	Multiplying by $\e^{2\rho(t)}$ and taking into account that $T$ is monotone entails
	\begin{gather*}
		\frac{\d}{\d t}\Big(\e^{2\rho(t)}
		\big(\psi(t)+\varepsilon(t)\ip{T(x(t),\lambda(t))}
		{(x(t),\lambda(t))-(\bar{x},\bar{\lambda})}\big)\Big)\\
		+\,\frac{\d}{\d t}\Big(\e^{2\rho(t)}
		\Big(\frac{\sigma(t)}{2}\norm{(x(t),\lambda(t))}^{2}
		-\frac{\varepsilon^{2}(t)}{2}
		\norm{(\bar{x},\bar{\lambda})}^{2}\Big)\Big)\\
		-\,\e^{2\rho(t)}\frac{\dot{\sigma}(t)}{2}\norm{(x(t),\lambda(t))}^{2}\leq0.
	\end{gather*}
	Integrating over $[t_{+},t]$ and subsequently dividing by $\e^{2\rho(t)}$, we find that there exists $K\geq0$ such that
	\begin{equation}
		\begin{gathered}\label{sec4:convrate}
			\psi(t)+\varepsilon(t)
			\ip{T(x(t),\lambda(t))}{(x(t),\lambda(t))-(\bar{x},\bar{\lambda})}
			+\frac{\sigma(t)}{2}\norm{(x(t),\lambda(t))}^{2}\\
			-\,\e^{-2\rho(t)}\int_{t_{+}}^{t}\e^{2\rho(\tau)}
			\frac{\dot{\sigma}(\tau)}{2}\norm{(x(\tau),\lambda(\tau))}^{2}\d\tau
			\leq K\e^{-2\rho(t)}+\,\frac{\varepsilon^{2}(t)}{2}
			\norm{(\bar{x},\bar{\lambda})}^{2}.
		\end{gathered}
	\end{equation}
	Noticing that we have both $\sigma(t)\geq0$ and $\dot{\sigma}(t)\leq0$ for all $t\geq t_{+}$, we deduce
	\begin{gather*}
		\psi(t)+\varepsilon(t)
		\ip{T(x(t),\lambda(t))}{(x(t),\lambda(t))-(\bar{x},\bar{\lambda})}\\
		\leq K\e^{-2\rho(t)}+\,\frac{\varepsilon^{2}(t)}{2}
		\norm{(\bar{x},\bar{\lambda})}^{2}.
	\end{gather*}
	Taking into account that $(\bar{x},\bar{\lambda})$ is a saddle point of $L$ and using the fact that
	\begin{equation*}
		\ip{T(x(t),\lambda(t))}{(x(t),\lambda(t))-(\bar{x},\bar{\lambda})}
		\geq L(x(t),\bar{\lambda})-L(\bar{x},\lambda(t)),
	\end{equation*}
	we obtain both
	\begin{gather*}
		\psi(t)\leq K\e^{-2\rho(t)}+\,\frac{\varepsilon^{2}(t)}{2}
		\norm{(\bar{x},\bar{\lambda})}^{2},\ \text{and}\\
		\varepsilon(t)\big(L(x(t),\bar{\lambda})-L(\bar{x},\lambda(t))\big)\leq
		K\e^{-2\rho(t)}+\,\frac{\varepsilon^{2}(t)}{2}
		\norm{(\bar{x},\bar{\lambda})}^{2}.
	\end{gather*}
	In view of \eqref{sec1:arrhurtik} and the fact that $T(\bar{x},\bar{\lambda})=(0,0)$, the remaining estimate now follows at once from the basic inequality
	\begin{align*}
		\norm{T(x(t),\lambda(t))-T(\bar{x},\bar{\lambda})}^{2}&=
		\norm{(\dot{x}(t),\dot{\lambda}(t))
			+\varepsilon(t)((x(t),\lambda(t))-(\bar{x},\bar{\lambda}))
			+\varepsilon(t)(\bar{x},\bar{\lambda})}^{2}\\
		&\leq\big(\sqrt{2\psi(t)}+\varepsilon(t)
		\norm{(\bar{x},\bar{\lambda})}\big)^{2},
	\end{align*}
	concluding the result.
\end{proof}

As an immediate consequence of the above result, we recover the decay rate estimate
\begin{equation*}
	\norm{(\dot{x}(t),\dot{\lambda}(t))}^{2}
	=\OO\big(\e^{-2\rho(t)}+\,\abs{\dot{\varepsilon}(t)}\big)\
	\text{as $t\to+\infty$}
\end{equation*}
recently obtained by Bo\c{t} and Nguyen \cite{RIB-DKN:24} in the context of general monotone operator flows with Tikhonov regularization.
\begin{proposition}\label{sec4:pr:velrate}
	Let $S\times M$ be non-empty, let $(x,\lambda):[t_{0},+\infty[\ \to X\times Y$ be a solution of \eqref{sec1:arrhurtik}, and let $\varepsilon:[t_{0},+\infty[\ \to\ ]0,+\infty[$ be such that
	\begin{align*}
		\begin{rcases}
			\begin{aligned}
				\varepsilon^{2}(t)+\dot{\varepsilon}(t)&\geq0\\
				2\varepsilon(t)\dot{\varepsilon}(t)+\ddot{\varepsilon}(t)&\leq0
				\hspace{1pt}
			\end{aligned}
		\end{rcases}\quad\forall t\geq t_{+}
	\end{align*}
	for some $t_{+}\geq t_{0}$. Then it holds that
	\begin{equation*}
		\norm{(\dot{x}(t),\dot{\lambda}(t))}^{2}=\OO\big(\e^{-2\rho(t)}
		+\,\varepsilon^{2}(t)\big)\ \text{as $t\to+\infty$}.
	\end{equation*}
\end{proposition}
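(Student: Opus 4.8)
The plan is to obtain this estimate as an immediate corollary of Theorem~\ref{sec4:th:convrate} combined with the boundedness of the trajectory furnished by Proposition~\ref{sec3:pr:boundedness}. I would fix $(\bar{x},\bar{\lambda})\in S\times M$ and decompose the velocity as
\begin{equation*}
	(\dot{x}(t),\dot{\lambda}(t))=\big[(\dot{x}(t),\dot{\lambda}(t))+\varepsilon(t)((x(t),\lambda(t))-(\bar{x},\bar{\lambda}))\big]-\varepsilon(t)\big((x(t),\lambda(t))-(\bar{x},\bar{\lambda})\big),
\end{equation*}
so that the triangle inequality yields
\begin{equation*}
	\norm{(\dot{x}(t),\dot{\lambda}(t))}\leq\norm{(\dot{x}(t),\dot{\lambda}(t))+\varepsilon(t)((x(t),\lambda(t))-(\bar{x},\bar{\lambda}))}+\varepsilon(t)\norm{(x(t),\lambda(t))-(\bar{x},\bar{\lambda})}.
\end{equation*}

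Next I would estimate the two terms on the right separately. The first one is controlled by the first assertion of Theorem~\ref{sec4:th:convrate}, which gives $\norm{(\dot{x}(t),\dot{\lambda}(t))+\varepsilon(t)((x(t),\lambda(t))-(\bar{x},\bar{\lambda}))}^{2}=\OO(\e^{-2\rho(t)}+\varepsilon^{2}(t))$ and hence, by subadditivity of the square root, $\norm{(\dot{x}(t),\dot{\lambda}(t))+\varepsilon(t)((x(t),\lambda(t))-(\bar{x},\bar{\lambda}))}=\OO(\e^{-\rho(t)}+\varepsilon(t))$. For the second term, Proposition~\ref{sec3:pr:boundedness} ensures that $t\mapsto(x(t),\lambda(t))$ is bounded on $[t_{0},+\infty[$, whence $\varepsilon(t)\norm{(x(t),\lambda(t))-(\bar{x},\bar{\lambda})}=\OO(\varepsilon(t))$. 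Adding the two bounds gives $\norm{(\dot{x}(t),\dot{\lambda}(t))}=\OO(\e^{-\rho(t)}+\varepsilon(t))$, and squaring (using $(a+b)^{2}\leq 2a^{2}+2b^{2}$) produces the claimed rate $\norm{(\dot{x}(t),\dot{\lambda}(t))}^{2}=\OO(\e^{-2\rho(t)}+\varepsilon^{2}(t))$.

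I do not expect any genuine obstacle here: the proposition is essentially a one-line consequence of Theorem~\ref{sec4:th:convrate}. The only point requiring a little care is the passage between an $\OO$-bound on a squared quantity and the corresponding $\OO$-bound on the quantity itself; this is legitimate precisely because $\e^{-\rho(t)}$ and $\varepsilon(t)$ are nonnegative and $s\mapsto\sqrt{s}$ is subadditive, so the error terms $\e^{-2\rho(t)}$ and $\varepsilon^{2}(t)$ pass cleanly to $\e^{-\rho(t)}$ and $\varepsilon(t)$ and back. Alternatively, recalling from \eqref{sec1:arrhurtik} that $(\dot{x},\dot{\lambda})=-T_{t}(x,\lambda)=-(T(x,\lambda)-T(\bar{x},\bar{\lambda}))-\varepsilon(t)((x,\lambda)-(\bar{x},\bar{\lambda}))-\varepsilon(t)(\bar{x},\bar{\lambda})$ and using $T(\bar{x},\bar{\lambda})=(0,0)$, one may instead bound the three summands by the third assertion of Theorem~\ref{sec4:th:convrate}, by boundedness of the trajectory, and by $\OO(\varepsilon(t))$ respectively, arriving at the same conclusion.
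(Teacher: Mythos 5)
Your argument is correct and there is no circularity: Theorem \ref{sec4:th:convrate} and the boundedness assertion of Proposition \ref{sec3:pr:boundedness} are both established independently of the velocity estimate, and your decomposition $(\dot{x},\dot{\lambda})=\bigl[(\dot{x},\dot{\lambda})+\varepsilon(t)((x,\lambda)-(\bar{x},\bar{\lambda}))\bigr]-\varepsilon(t)\bigl((x,\lambda)-(\bar{x},\bar{\lambda})\bigr)$ together with $\sqrt{a+b}\leq\sqrt{a}+\sqrt{b}$ and $(a+b)^{2}\leq2a^{2}+2b^{2}$ legitimately converts the two bounds into the claimed rate. The paper reaches the same conclusion by a slightly different mechanism: instead of using the statement of Theorem \ref{sec4:th:convrate} as a black box and paying the triangle-inequality constants, it returns to the intermediate inequality \eqref{sec4:convrate} from that proof and observes that, by \eqref{sec1:arrhurtik}, one has the exact identity $\psi(t)+\varepsilon(t)\ip{T(x(t),\lambda(t))}{(x(t),\lambda(t))-(\bar{x},\bar{\lambda})}=\vartheta(t)+\tfrac{\varepsilon^{2}(t)}{2}\bigl(\norm{(\bar{x},\bar{\lambda})}^{2}-\norm{(x(t),\lambda(t))}^{2}\bigr)$ (with $\psi$, $\vartheta$, $\sigma$ as in the paper's proofs), so that \eqref{sec4:convrate} directly yields $\vartheta(t)+\tfrac{\sigma(t)}{2}\norm{(x(t),\lambda(t))}^{2}\leq K\e^{-2\rho(t)}+\tfrac{\varepsilon^{2}(t)}{2}\norm{(x(t),\lambda(t))}^{2}$; dropping the nonnegative $\sigma$-term and invoking boundedness finishes the proof. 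Both routes give the same $\OO\bigl(\e^{-2\rho(t)}+\varepsilon^{2}(t)\bigr)$ estimate; yours is more modular, relying only on the published statement of Theorem \ref{sec4:th:convrate}, while the paper's exact rewriting stays inside the single energy inequality and avoids the extra multiplicative factors your triangle-inequality step introduces. Your alternative remark via $(\dot{x},\dot{\lambda})=-(T(x,\lambda)-T(\bar{x},\bar{\lambda}))-\varepsilon(t)((x,\lambda)-(\bar{x},\bar{\lambda}))-\varepsilon(t)(\bar{x},\bar{\lambda})$ is likewise valid.
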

\begin{proof}
	Let $(\bar{x},\bar{\lambda})\in S\times M$ and consider again $\vartheta:[t_{0},+\infty[\ \to\RR$ defined by $\vartheta(t)=\norm{(\dot{x}(t),\dot{\lambda}(t))}^{2}/2$. Recall from inequality \eqref{sec4:convrate} that for every $t\geq t_{+}$, we have
	\begin{gather*}
		\psi(t)+\varepsilon(t)
		\ip{T(x(t),\lambda(t))}{(x(t),\lambda(t))-(\bar{x},\bar{\lambda})}
		+\frac{\sigma(t)}{2}\norm{(x(t),\lambda(t))}^{2}\\
		-\,\e^{-2\rho(t)}\int_{t_{+}}^{t}\e^{2\rho(\tau)}
		\frac{\dot{\sigma}(\tau)}{2}\norm{(x(\tau),\lambda(\tau))}^{2}\d\tau
		\leq K\e^{-2\rho(t)}+\,\frac{\varepsilon^{2}(t)}{2}
		\norm{(\bar{x},\bar{\lambda})}^{2}.
	\end{gather*}
	Utilizing the fact that $\dot{\sigma}(t)\leq0$ for all $t\geq t_{+}$, we obtain
	\begin{gather*}
		\psi(t)+\varepsilon(t)
		\ip{T(x(t),\lambda(t))}{(x(t),\lambda(t))-(\bar{x},\bar{\lambda})}
		+\frac{\sigma(t)}{2}\norm{(x(t),\lambda(t))}^{2}\\
		\leq K\e^{-2\rho(t)}+\,\frac{\varepsilon^{2}(t)}{2}
		\norm{(\bar{x},\bar{\lambda})}^{2}.
	\end{gather*}
	Equivalently, in view of \eqref{sec1:arrhurtik}, the above inequality reads
	\begin{equation*}
		\vartheta(t)+\frac{\sigma(t)}{2}\norm{(x(t),\lambda(t))}^{2}
		\leq K\e^{-2\rho(t)}+\,\frac{\varepsilon^{2}(t)}{2}
		\norm{(x(t),\lambda(t))}^{2}.
	\end{equation*}
	Taking into account that $\sigma(t)\geq0$ for all $t\geq t_{+}$, we have
	\begin{equation*}
		\vartheta(t)\leq K\e^{-2\rho(t)}+\,\frac{\varepsilon^{2}(t)}{2}
		\norm{(x(t),\lambda(t))}^{2}.
	\end{equation*}
	Since $(x(t),\lambda(t))$ remains bounded on $[t_{0},+\infty[$, we conclude the result.
\end{proof}

The following result provides a more refined estimate for the velocity of an \eqref{sec1:arrhurtik} solution given the additional assumption that the Tikhonov regularization function $\varepsilon:[t_{0},+\infty[\ \to\ ]0,+\infty[$ is decreasing.
\begin{proposition}\label{sec4:pr:ewma}
	Let $S\times M$ be non-empty, let $(x,\lambda):[t_{0},+\infty[\ \to X\times Y$ be a solution of \eqref{sec1:arrhurtik}, and let $\varepsilon:[t_{0},+\infty[\ \to\ ]0,+\infty[$ be decreasing such that
	\begin{align*}
		\begin{rcases}
			\begin{aligned}
				\varepsilon^{2}(t)+\dot{\varepsilon}(t)&\geq0\\
				2\varepsilon(t)\dot{\varepsilon}(t)+\ddot{\varepsilon}(t)&\leq0
				\hspace{1pt}
			\end{aligned}
		\end{rcases}\quad\forall t\geq t_{+}
	\end{align*}
	for some $t_{+}\geq t_{0}$. Then the following assertion holds:
	\begin{equation*}
		\lim_{t\to+\infty}\e^{-\rho(t)}\int_{t_{+}}^{t}\e^{\rho(\tau)}
		\frac{1}{\varepsilon(\tau)}\norm{(\dot{x}(\tau),\dot{\lambda}(\tau))}^{2}
		\d\tau<+\infty.
	\end{equation*}
\end{proposition}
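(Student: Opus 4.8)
The plan is to deduce the estimate from the velocity decay rate already obtained in Proposition~\ref{sec4:pr:velrate}, once it is noticed that the first of the two hypotheses on $\varepsilon(t)$ forces $\e^{-2\rho(t)}=\OO(\varepsilon^{2}(t))$ on $[t_{+},+\infty[$.

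The decisive preliminary observation is that $t\mapsto\e^{-\rho(t)}/\varepsilon(t)$ is non-increasing on $[t_{+},+\infty[$: since $\dot\rho=\varepsilon$, one computes
\begin{equation*}
	\frac{\d}{\d t}\Big(\frac{\e^{-\rho(t)}}{\varepsilon(t)}\Big)
	=-\,\e^{-\rho(t)}\,\frac{\varepsilon^{2}(t)+\dot\varepsilon(t)}{\varepsilon^{2}(t)}\leq0
	\qquad\text{for all }t\geq t_{+},
\end{equation*}
by the assumption $\varepsilon^{2}(t)+\dot\varepsilon(t)\geq0$ (equivalently, $\tfrac{\d}{\d t}(1/\varepsilon(t))\leq1$ for $t\geq t_{+}$). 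Hence $\e^{-\rho(t)}/\varepsilon(t)\leq\e^{-\rho(t_{+})}/\varepsilon(t_{+})$, so there is $c\geq0$ with $\e^{-\rho(t)}\leq c\,\varepsilon(t)$, and in particular $\e^{-2\rho(t)}\leq c^{2}\varepsilon^{2}(t)$, for every $t\geq t_{+}$.

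Combining this with Proposition~\ref{sec4:pr:velrate} and the boundedness of $(x(t),\lambda(t))$ on $[t_{0},+\infty[$ from Proposition~\ref{sec3:pr:boundedness}, I would obtain a constant $K\geq0$ with $\norm{(\dot x(t),\dot\lambda(t))}^{2}\leq K(\e^{-2\rho(t)}+\varepsilon^{2}(t))\leq K(c^{2}+1)\,\varepsilon^{2}(t)$, hence $\tfrac1{\varepsilon(t)}\norm{(\dot x(t),\dot\lambda(t))}^{2}\leq K(c^{2}+1)\,\varepsilon(t)$, for all $t\geq t_{+}$. Plugging this into the moving average and using $\tfrac{\d}{\d t}\e^{\rho(t)}=\varepsilon(t)\e^{\rho(t)}$ then yields, for all $t\geq t_{+}$,
\begin{equation*}
	\e^{-\rho(t)}\int_{t_{+}}^{t}\e^{\rho(\tau)}\frac{1}{\varepsilon(\tau)}\norm{(\dot x(\tau),\dot\lambda(\tau))}^{2}\d\tau
	\leq K(c^{2}+1)\,\e^{-\rho(t)}\int_{t_{+}}^{t}\e^{\rho(\tau)}\varepsilon(\tau)\d\tau
	=K(c^{2}+1)\big(1-\e^{\rho(t_{+})-\rho(t)}\big)\leq K(c^{2}+1),
\end{equation*}
so that the limit superior of the left-hand side as $t\to+\infty$ is finite, which is the assertion.

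The main obstacle is the opening observation: a priori the term $\e^{-2\rho(t)}/\varepsilon(t)$ produced by Proposition~\ref{sec4:pr:velrate} could blow up after division by the vanishing factor $\varepsilon(t)$, and it is precisely the condition $\varepsilon^{2}(t)+\dot\varepsilon(t)\geq0$ that keeps $\e^{-\rho(t)}$ comparable with $\varepsilon(t)$ and thereby controls this contribution. One may also bypass Proposition~\ref{sec4:pr:velrate} and argue directly from \eqref{sec3:csinequ}: setting $\Xi(t)=\tfrac12\norm{(\dot x(t),\dot\lambda(t))}^{2}+\tfrac{\dot\varepsilon(t)}{2}\norm{(x(t),\lambda(t))}^{2}$ and $\sigma(t)=\varepsilon^{2}(t)+\dot\varepsilon(t)$, equation \eqref{sec3:csinequ} rewrites as $\dot\Xi+2\varepsilon\Xi+\langle\tfrac{\d}{\d t}T(x,\lambda),(\dot x,\dot\lambda)\rangle-\tfrac{\dot\sigma}{2}\norm{(x,\lambda)}^{2}=0$; multiplying by $\e^{2\rho}$, discarding the nonnegative monotonicity term and using $\dot\sigma\leq0$ on $[t_{+},+\infty[$ shows $\e^{2\rho}\Xi$ is non-increasing there, so that $\norm{(\dot x(t),\dot\lambda(t))}^{2}=\OO(\e^{-2\rho(t)}+\abs{\dot\varepsilon(t)})$ (here the hypothesis that $\varepsilon$ is decreasing is used to write $-\dot\varepsilon=\abs{\dot\varepsilon}$), and one concludes as above, additionally using $\abs{\dot\varepsilon(t)}\leq\varepsilon^{2}(t)$ together with the boundedness of $\e^{-\rho(t)}/\varepsilon(t)$.
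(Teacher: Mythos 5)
Your proof is correct, but it takes a genuinely different route from the paper. You first observe that $\tfrac{\d}{\d t}\big(\e^{-\rho(t)}/\varepsilon(t)\big)=-\e^{-\rho(t)}\big(\varepsilon^{2}(t)+\dot{\varepsilon}(t)\big)/\varepsilon^{2}(t)\leq0$ on $[t_{+},+\infty[$, so that $\e^{-\rho(t)}\leq c\,\varepsilon(t)$ there; combined with the pointwise rate of Proposition \ref{sec4:pr:velrate} this gives $\norm{(\dot{x}(t),\dot{\lambda}(t))}^{2}=\OO(\varepsilon^{2}(t))$, and the weighted average is then bounded by the elementary computation $\e^{-\rho(t)}\int_{t_{+}}^{t}\e^{\rho(\tau)}\varepsilon(\tau)\d\tau\leq1$. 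The paper instead stays inside the energy argument of Theorem \ref{sec4:th:convrate}: from \eqref{sec4:convrate} it extracts $\varepsilon(t)\dot{\phi}(t)+\varepsilon^{2}(t)\phi(t)+\vartheta(t)\leq K\e^{-2\rho(t)}+\tfrac{\varepsilon^{2}(t)}{2}\norm{(\bar{x},\bar{\lambda})}^{2}$, divides by $\varepsilon(t)$, multiplies by $\e^{\rho(t)}$, integrates, and then needs a separate argument (integrating $\dot{\sigma}\leq0$ and using that $\varepsilon$ is decreasing) to show that $\e^{-\rho(t)}\int_{t_{+}}^{t}\varepsilon^{-1}(\tau)\e^{-\rho(\tau)}\d\tau$ stays bounded. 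Your version is shorter and more elementary, it does not use the decreasing hypothesis at all in the main route, and it makes explicit that, under $\varepsilon^{2}+\dot{\varepsilon}\geq0$, the averaged estimate is already a consequence of the pointwise velocity rate; the paper's Lyapunov route, by contrast, produces the weighted average intrinsically (bounding $\phi(t)$ and the average simultaneously) without passing through the $\OO(\varepsilon^{2})$ bound. Two minor points: Proposition \ref{sec4:pr:velrate} is stated asymptotically, so to claim your bound for all $t\geq t_{+}$ you should either enlarge the constant to absorb a compact interval or quote the inequality $\vartheta(t)\leq K\e^{-2\rho(t)}+\tfrac{\varepsilon^{2}(t)}{2}\norm{(x(t),\lambda(t))}^{2}$ from its proof; and, like the paper, what you actually establish is a finite limit superior (boundedness) rather than existence of the limit, which is how the assertion should be read. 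Your alternative route via \eqref{sec3:csinequ} is also sound and reproduces the $\OO(\e^{-2\rho(t)}+\abs{\dot{\varepsilon}(t)})$ estimate of Bo\c{t} and Nguyen before concluding with $\abs{\dot{\varepsilon}}\leq\varepsilon^{2}$.
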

\begin{proof}
	Let $(\bar{x},\bar{\lambda})\in S\times M$ and consider again $\phi:[t_{0},+\infty[\ \to\RR$ defined by $\phi(t)=\norm{(x(t),\lambda(t))-(\bar{x},\bar{\lambda})}^{2}/2$. Observing that we have both $\sigma(t)\geq0$ and $\dot{\sigma}(t)\leq0$ for all $t\geq t_{+}$, it follows from inequality \eqref{sec4:convrate} and the monotonicity of $T$ that
	\begin{equation*}
		\psi(t)\leq K\e^{-2\rho(t)}+\,\frac{\varepsilon^{2}(t)}{2}
		\norm{(\bar{x},\bar{\lambda})}^{2}.
	\end{equation*}
	Equivalently, we have
	\begin{equation*}
		\varepsilon(t)\dot{\phi}(t)+\varepsilon^{2}(t)\phi(t)+\vartheta(t)
		\leq K\e^{-2\rho(t)}+\,\frac{\varepsilon^{2}(t)}{2}
		\norm{(\bar{x},\bar{\lambda})}^{2}.
	\end{equation*}
	Successively dividing by $\varepsilon(t)$ and multiplying by $\e^{\rho(t)}$ gives
	\begin{equation*}
		\frac{\d}{\d t}\big(\e^{\rho(t)}\phi(t)\big)
		+\e^{\rho(t)}\frac{1}{\varepsilon(t)}\vartheta(t)
		\leq\frac{K}{\varepsilon(t)}\e^{-\rho(t)}
		+\,\frac{1}{2}\frac{\d}{\d t}\e^{\rho(t)}\norm{(\bar{x},\bar{\lambda})}^{2}.
	\end{equation*}
	Integrating over $[t_{+},t]$ and subsequently dividing by $\e^{\rho(t)}$ entails
	\begin{equation}
		\begin{gathered}\label{sec4:expweight}
			\phi(t)+\e^{-\rho(t)}\int_{t_{+}}^{t}\e^{\rho(\tau)}
			\frac{1}{\varepsilon(\tau)}\vartheta(\tau)\d\tau
			\leq K\e^{-\rho(t)}\int_{t_{+}}^{t}\frac{1}{\varepsilon(\tau)}
			\e^{-\rho(\tau)}\d\tau\\
			+\,\e^{-(\rho(t)-\rho(t_{+}))}\phi(t_{+})
			+\frac{1}{2}\norm{(\bar{x},\bar{\lambda})}^{2}.
		\end{gathered}
	\end{equation}

	On the other hand, since $\dot{\sigma}(t)\leq0$ for all $t\geq t_{+}$, it follows from an immediate integration that
	\begin{equation*}
		-\,\dot{\varepsilon}(t)\e^{2\rho(t)}\geq-\,\dot{\varepsilon}(t_{+})
		\e^{2\rho(t_{+})}.
	\end{equation*}
	Owing to the fact that $\varepsilon(t)$ is decreasing, we obtain
	\begin{equation*}
		-\frac{1}{\dot{\varepsilon}(t_{+})\e^{2\rho(t_{+})}}
		\frac{-\dot{\varepsilon}(t)}{\varepsilon(t)}
		\geq\frac{1}{\varepsilon(t)}\e^{-2\rho(t)}.
	\end{equation*}
	Multiplying by $\e^{\rho(t)}$ and taking into account that $\sigma(t)\geq0$ for all $t\geq t_{+}$ yields
	\begin{equation*}
		-\frac{1}{\dot{\varepsilon}(t_{+})\e^{2\rho(t_{+})}}
		\frac{\d}{\d t}\e^{\rho(t)}\geq\frac{1}{\varepsilon(t)}\e^{-\rho(t)}.
	\end{equation*}
	Integrating over $[t_{+},t]$ and subsequently dividing by $\e^{\rho(t)}$ gives
	\begin{equation*}
		-\frac{1}{\dot{\varepsilon}(t_{+})\e^{2\rho(t_{+})}}\geq\e^{-\rho(t)}\int_{t_{+}}^{t}\frac{1}{\varepsilon(\tau)}\e^{-\rho(\tau)}\d\tau,
	\end{equation*}
	implying that the integral on the right-hand side of inequality \eqref{sec4:expweight} remains bounded on $[t_{+},+\infty[$. Taking into account that $\phi(t)\geq0$, passing to the limit in inequality \eqref{sec4:expweight} as $t\to+\infty$ then gives the desired result.
\end{proof}

Let us now investigate the rate of strong convergence of the \eqref{sec1:arrhurtik} solutions under the assumption that there exists $\alpha>0$ such that for every $(x,\lambda),(\xi,\eta)\in X\times Y$, it holds that
\begin{equation}
	\renewcommand{\theequation}{L}\tag{\theequation}
	\norm{T(x,\lambda)-T(\xi,\eta)}^{2}
	\geq\alpha\norm{(x,\lambda)-(\xi,\eta)}^{2}.
\end{equation}
Recall that condition \eqref{sec1:errorbound} clearly implies that the set $S\times M$ is a singleton. The following result is an immediate consequence of Theorem \ref{sec4:th:convrate}.
\begin{corollary}\label{sec4:co:fastbound}
	Under the hypotheses of Theorem \ref{sec4:th:convrate}, suppose that $T:X\times Y\to X\times Y$ satisfies condition \eqref{sec1:errorbound}. Then, for $(\bar{x},\bar{\lambda})\in S\times M$, it holds that
	\begin{equation*}
		\norm{(x(t),\lambda(t))-(\bar{x},\bar{\lambda})}^{2}=
		\OO\big(\e^{-2\rho(t)}+\,\varepsilon^{2}(t)\big)\
		\text{as $t\to+\infty$}.
	\end{equation*}
\end{corollary}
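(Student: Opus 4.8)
The plan is to combine the third estimate of Theorem~\ref{sec4:th:convrate} with the error-bound condition \eqref{sec1:errorbound}. First I would recall that under the stated hypotheses, Theorem~\ref{sec4:th:convrate} yields, for the fixed saddle point $(\bar{x},\bar{\lambda})\in S\times M$,
\begin{equation*}
	\norm{T(x(t),\lambda(t))-T(\bar{x},\bar{\lambda})}^{2}=
	\OO\big(\e^{-2\rho(t)}+\,\varepsilon^{2}(t)\big)\quad\text{as $t\to+\infty$};
\end{equation*}
that is, there exists $K\geq0$ and $t_{+}\geq t_{0}$ such that $\norm{T(x(t),\lambda(t))-T(\bar{x},\bar{\lambda})}^{2}\leq K\big(\e^{-2\rho(t)}+\varepsilon^{2}(t)\big)$ for all $t\geq t_{+}$.

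Next I would invoke condition \eqref{sec1:errorbound} with the pair $(x,\lambda)=(x(t),\lambda(t))$ and $(\xi,\eta)=(\bar{x},\bar{\lambda})$, which gives
\begin{equation*}
	\alpha\norm{(x(t),\lambda(t))-(\bar{x},\bar{\lambda})}^{2}\leq
	\norm{T(x(t),\lambda(t))-T(\bar{x},\bar{\lambda})}^{2}.
\end{equation*}
Combining the two displays and dividing by $\alpha>0$ yields
\begin{equation*}
	\norm{(x(t),\lambda(t))-(\bar{x},\bar{\lambda})}^{2}\leq
	\frac{K}{\alpha}\big(\e^{-2\rho(t)}+\varepsilon^{2}(t)\big)\quad\text{for all $t\geq t_{+}$},
\end{equation*}
which is precisely the claimed decay rate estimate. (Recall that \eqref{sec1:errorbound} forces $S\times M$ to be a singleton, so $(\bar{x},\bar{\lambda})=\proj_{S\times M}(0,0)$ is the unique zero of $T$, and the estimate is independent of the choice of reference point.)

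There is essentially no obstacle here: the corollary is a direct consequence of Theorem~\ref{sec4:th:convrate} together with the definition of the error-bound condition, and no new analysis of the differential system is required. The only minor point to be careful about is to anchor the argument at the \emph{same} saddle point $(\bar{x},\bar{\lambda})$ in both the application of Theorem~\ref{sec4:th:convrate} and the application of \eqref{sec1:errorbound}, which is automatic since $S\times M$ is a singleton under \eqref{sec1:errorbound}.
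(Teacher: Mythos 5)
Your proof is correct and follows exactly the same route as the paper: combine the third estimate of Theorem \ref{sec4:th:convrate} with condition \eqref{sec1:errorbound} applied to $(x(t),\lambda(t))$ and $(\bar{x},\bar{\lambda})$, then divide by $\alpha>0$. Nothing is missing; your remark about anchoring both estimates at the same (unique) saddle point matches the paper's implicit use of the singleton property of $S\times M$.
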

\begin{proof}
	Let $(\bar{x},\bar{\lambda})\in S\times M$ and consider again $\phi:[t_{0},+\infty[\ \to\RR$ defined by $\phi(t)=\norm{(x(t),\lambda(t))-(\bar{x},\bar{\lambda})}^{2}/2$. Since $T$ satisfies condition \eqref{sec1:errorbound}, there exists $\alpha>0$ such that for every $t\geq t_{+}$, it holds that
	\begin{equation*}
		\norm{T(x(t),\lambda(t))-T(\bar{x},\bar{\lambda})}^{2}\geq
		2\alpha\phi(t).
	\end{equation*}
	The desired estimate now follows at once from Theorem \ref{sec4:th:convrate}.
\end{proof}

\subsection{Asymptotics relative to the viscosity curve}
Let us now adapt the previous results to obtain fast decay rate estimates for the \eqref{sec1:arrhurtik} solutions with respect to the viscosity curve $(x_{t},\lambda_{t})$ as $t\to+\infty$. Recall that the viscosity curve $(x_{t},\lambda_{t})$ is characterized, for each $t\geq t_{0}$, as the unique zero of the $\varepsilon(t)$-strongly monotone operator
\begin{equation*}
	T_{t}=T+\varepsilon(t)\id.
\end{equation*}

The following result is analogous to Theorem \ref{sec4:th:convrate}.
\begin{theorem}\label{sec4:th:convratevc}
	Let $S\times M$ be non-empty and let $(x,\lambda):[t_{0},+\infty[\ \to X\times Y$ be a solution of \eqref{sec1:arrhurtik}. Let $(x_{t},\lambda_{t})=\zer T_{t}$ for each $t\geq t_{0}$ and suppose that $\varepsilon:[t_{0},+\infty[\ \to\ ]0,+\infty[$ verifies
	\begin{align*}
		\begin{rcases}
			\begin{aligned}
				\varepsilon^{2}(t)+\dot{\varepsilon}(t)&\geq0\\
				2\varepsilon(t)\dot{\varepsilon}(t)+\ddot{\varepsilon}(t)&\leq0
				\hspace{1pt}
			\end{aligned}
		\end{rcases}\quad\forall t\geq t_{+}
	\end{align*}
	for some $t_{+}\geq t_{0}$. Then the following assertions hold:
	\begin{align*}
		\norm{(\dot{x}(t),\dot{\lambda}(t))+\varepsilon(t)((x(t),\lambda(t))
			-(x_{t},\lambda_{t}))}^{2}&=
		\OO\big(\e^{-2\rho(t)}+\,\varepsilon^{2}(t)\big)\
		\text{as $t\to+\infty$};\\
		\norm{T(x(t),\lambda(t))-T(x_{t},\lambda_{t})}^{2}&=
		\OO\big(\e^{-2\rho(t)}+\,\varepsilon^{2}(t)\big)\
		\text{as $t\to+\infty$}.
	\end{align*}
\end{theorem}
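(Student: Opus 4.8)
The plan is to deduce this from Theorem~\ref{sec4:th:convrate} and Lemma~\ref{sec2:lm:vcgeo} rather than rerun the energy computation. Write $(x,\lambda)$ for the \eqref{sec1:arrhurtik} solution and $(x_t,\lambda_t)=\zer T_t$. The system \eqref{sec1:arrhurtik} says $(\dot x,\dot\lambda)+T(x,\lambda)+\varepsilon(t)(x,\lambda)=(0,0)$, while the viscosity point satisfies $T(x_t,\lambda_t)+\varepsilon(t)(x_t,\lambda_t)=(0,0)$; subtracting $\varepsilon(t)(x_t,\lambda_t)$ and using the latter yields the identity
\[
  (\dot x(t),\dot\lambda(t))+\varepsilon(t)\big((x(t),\lambda(t))-(x_t,\lambda_t)\big)
  =-\big(T(x(t),\lambda(t))-T(x_t,\lambda_t)\big).
\]
In particular the two quantities in the statement are equal, so it suffices to bound $\norm{T(x(t),\lambda(t))-T(x_t,\lambda_t)}^2$.

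Fix any $(\bar x,\bar\lambda)\in S\times M$, which is non-empty by hypothesis; then $T(\bar x,\bar\lambda)=(0,0)$, and since $T(x_t,\lambda_t)=-\varepsilon(t)(x_t,\lambda_t)$ the triangle inequality gives
\[
  \norm{T(x(t),\lambda(t))-T(x_t,\lambda_t)}
  \le\norm{T(x(t),\lambda(t))-T(\bar x,\bar\lambda)}+\varepsilon(t)\norm{(x_t,\lambda_t)}.
\]
The first term on the right is $\OO(\e^{-\rho(t)}+\varepsilon(t))$ by the third assertion of Theorem~\ref{sec4:th:convrate} (take square roots and use $\sqrt{p+q}\le\sqrt p+\sqrt q$), and the second is $\OO(\varepsilon(t))$ because Lemma~\ref{sec2:lm:vcgeo}(i) bounds $\norm{(x_t,\lambda_t)}$ by the constant $\norm{\proj_{S\times M}(0,0)}$. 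Squaring and using $(p+q)^2\le 2p^2+2q^2$ then gives $\norm{T(x(t),\lambda(t))-T(x_t,\lambda_t)}^2=\OO(\e^{-2\rho(t)}+\varepsilon^2(t))$, and the first assertion follows from the displayed identity. Since the reduction is routine, the only real point is to spot the identity.

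Should one prefer a self-contained argument parallel to Theorem~\ref{sec4:th:convrate}, one would set $\psi(t)=\tfrac{1}{2}\norm{(\dot x(t),\dot\lambda(t))+\varepsilon(t)((x(t),\lambda(t))-(x_t,\lambda_t))}^2$, differentiate, substitute \eqref{sec1:arrhurtik}, and repeat the same manipulation with the two ``basic identities'' and the multiplier $\e^{2\rho(t)}$. The difference is that $(x_t,\lambda_t)$ is no longer constant, so extra terms involving $(\dot x_t,\dot\lambda_t)$ appear, and the main obstacle becomes showing these have a favorable sign; this is exactly where Lemma~\ref{sec2:lm:vcdiff} enters, which together with $\frac{\d}{\d t}T(x_t,\lambda_t)=-\dot\varepsilon(t)(x_t,\lambda_t)-\varepsilon(t)(\dot x_t,\dot\lambda_t)$ and the inequality $-\dot\varepsilon(t)\ip{(x_t,\lambda_t)}{(\dot x_t,\dot\lambda_t)}\ge\varepsilon(t)\norm{(\dot x_t,\dot\lambda_t)}^2$ lets the additional contributions be absorbed after integrating over $[t_+,t]$. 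Given the brevity of the reduction, I would present that.
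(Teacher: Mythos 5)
Your reduction is correct, and it follows a genuinely different (and shorter) route than the paper. The key identity you spot, namely that \eqref{sec1:arrhurtik} together with $T(x_{t},\lambda_{t})+\varepsilon(t)(x_{t},\lambda_{t})=(0,0)$ gives
\begin{equation*}
	(\dot{x}(t),\dot{\lambda}(t))+\varepsilon(t)\big((x(t),\lambda(t))-(x_{t},\lambda_{t})\big)
	=-\big(T(x(t),\lambda(t))-T(x_{t},\lambda_{t})\big),
\end{equation*}
is valid and makes the two asserted quantities coincide; the triangle inequality through the fixed anchor $(\bar{x},\bar{\lambda})$, combined with the third estimate of Theorem \ref{sec4:th:convrate} (after taking square roots) and the uniform bound $\norm{(x_{t},\lambda_{t})}\leq\norm{\proj_{S\times M}(0,0)}$ from Lemma \ref{sec2:lm:vcgeo}(i), then yields both conclusions, with all hypotheses of the cited results available and no circularity. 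The paper proceeds differently: it transfers the integrated energy inequality \eqref{sec4:convrate} to the time-varying anchor $(x_{t},\lambda_{t})$ (this works because $\psi(t)+\varepsilon(t)\ip{T(x(t),\lambda(t))}{(x(t),\lambda(t))-(\bar{x},\bar{\lambda})}-\tfrac{\varepsilon^{2}(t)}{2}\norm{(\bar{x},\bar{\lambda})}^{2}$ is independent of the anchor, so no derivative of $(x_{t},\lambda_{t})$ ever enters — contrary to the concern in your sketched ``self-contained'' variant, Lemma \ref{sec2:lm:vcdiff} is not needed), and then uses the optimality conditions for $(x_{t},\lambda_{t})$ plus the monotonicity of $T$ to absorb the cross terms, arriving at the intermediate inequality containing $\varepsilon(t)\ip{T(x(t),\lambda(t))-T(x_{t},\lambda_{t})}{(x(t),\lambda(t))-(x_{t},\lambda_{t})}+\varepsilon^{2}(t)\zeta(t)$. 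What the paper's route buys is precisely this sharper intermediate inequality (with the extra nonnegative gap and distance terms on the left), which carries more structural information; what your route buys is brevity, since it uses only the already-proved statements of Theorem \ref{sec4:th:convrate} and Lemma \ref{sec2:lm:vcgeo} rather than their proofs, at the price of slightly larger (but still $\OO\big(\e^{-2\rho(t)}+\varepsilon^{2}(t)\big)$) constants from the triangle and squaring steps.
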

\begin{proof}
	Let $S\times M$ be non-empty, let $(x_{t},\lambda_{t})=\zer T_{t}$, and let $\theta:[t_{0},+\infty[\ \to\RR$ be defined by $\theta(t)=\norm{(\dot{x}(t),\dot{\lambda}(t)+\varepsilon(t)((x(t),\lambda(t))-(x_{t},\lambda_{t}))}^{2}/2$. Consider again $\sigma:[t_{0},+\infty[\ \to\RR$ defined by $\sigma(t)=\varepsilon^{2}(t)+\dot{\varepsilon}(t)$ such that $\dot{\sigma}(t)=2\varepsilon(t)\dot{\varepsilon}(t)+\ddot{\varepsilon}(t)$. Using similar derivations as in the proof of Theorem \ref{sec4:th:convrate}, there exists $K\geq0$ such that for every $t\geq t_{+}$, it holds that
	\begin{gather*}
		\theta(t)+\varepsilon(t)
		\ip{T(x(t),\lambda(t))}{(x(t),\lambda(t))-(x_{t},\lambda_{t})}
		+\frac{\sigma(t)}{2}\norm{(x(t),\lambda(t))}^{2}\\
		-\,\e^{-2\rho(t)}\int_{t_{+}}^{t}\e^{2\rho(\tau)}
		\frac{\dot{\sigma}(\tau)}{2}\norm{(x(\tau),\lambda(\tau))}^{2}\d\tau
		\leq K\e^{-2\rho(t)}+\,\frac{\varepsilon^{2}(t)}{2}
		\norm{(x_{t},\lambda_{t})}^{2}.
	\end{gather*}
	Observing that we have both $\sigma(t)\geq0$ and $\dot{\sigma}(t)\leq0$ for all $t\geq t_{+}$, we infer
	\begin{gather*}
		\theta(t)+\varepsilon(t)
		\ip{T(x(t),\lambda(t))}{(x(t),\lambda(t))-(x_{t},\lambda_{t})}\\
		\leq K\e^{-2\rho(t)}+\,\frac{\varepsilon^{2}(t)}{2}
		\norm{(x_{t},\lambda_{t})}^{2}.
	\end{gather*}
	In view of the system of primal-dual optimality conditions, we readily deduce
	\begin{gather*}
		\theta(t)+\varepsilon(t)\ip{T(x(t),\lambda(t))-T(x_{t},\lambda_{t})}
		{(x(t),\lambda(t))-(x_{t},\lambda_{t})}\\
		+\,\varepsilon^{2}(t)\zeta(t)\leq K\e^{-2\rho(t)}
		+\,\frac{\varepsilon^{2}(t)}{2}\norm{(x(t),\lambda(t))}^{2}
	\end{gather*}
	with $\zeta:[t_{0},+\infty[\ \to\RR$ being defined as $\zeta(t)=\norm{(x(t),\lambda(t))-(x_{t},\lambda_{t})}^{2}/2$. Using the fact that $T$ is monotone and taking into account that $(x(t),\lambda(t))$ remains bounded on $[t_{0},+\infty[$, we arrive at the desired conclusion.
\end{proof}

Similarly to Proposition \ref{sec4:pr:ewma}, some more refined estimates can be derived under the additional assumption that the Tikhonov regularization function $\varepsilon:[t_{0},+\infty[\ \to\ ]0,+\infty[$ is decreasing.
\begin{proposition}
	Let $S\times M$ be non-empty and let $(x,\lambda):[t_{0},+\infty[\ \to X\times Y$ be a solution of \eqref{sec1:arrhurtik}. Let $(x_{t},\lambda_{t})=\zer T_{t}$ for each $t\geq t_{0}$ and suppose that $\varepsilon:[t_{0},+\infty[\ \to\ ]0,+\infty[$ is decreasing such that
	\begin{align*}
		\begin{rcases}
			\begin{aligned}
				\varepsilon^{2}(t)+\dot{\varepsilon}(t)&\geq0\\
				2\varepsilon(t)\dot{\varepsilon}(t)+\ddot{\varepsilon}(t)&\leq0
				\hspace{1pt}
			\end{aligned}
		\end{rcases}\quad\forall t\geq t_{+}
	\end{align*}
	for some $t_{+}\geq t_{0}$. Then the following estimates are verified:
	\begin{align*}
		\lim_{t\to+\infty}\e^{-\rho(t)}\int_{t_{+}}^{t}\e^{\rho(\tau)}
		\frac{1}{\varepsilon(\tau)}\norm{(\dot{x}(\tau),\dot{\lambda}(\tau))
			+\varepsilon(t)((x(\tau),\lambda(\tau))-(x_{\tau},\lambda_{\tau}))}^{2}
		\d\tau&<+\infty;\\
		\lim_{t\to+\infty}\e^{-\rho(t)}\int_{t_{+}}^{t}\e^{\rho(\tau)}
		\frac{1}{\varepsilon(\tau)}
		\norm{T(x(\tau),\lambda(\tau))-T(x_{\tau},\lambda_{\tau})}^{2}
		\d\tau&<+\infty.
	\end{align*}
\end{proposition}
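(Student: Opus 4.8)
The plan is to mimic the proof of Proposition~\ref{sec4:pr:ewma}, replacing the saddle point $(\bar x,\bar\lambda)$ by the viscosity curve $(x_{t},\lambda_{t})$ and invoking Theorem~\ref{sec4:th:convratevc} where Theorem~\ref{sec4:th:convrate} was used there. The first observation is that the two asserted estimates are one and the same. Since $(x(t),\lambda(t))$ solves \eqref{sec1:arrhurtik} we have $(\dot x(t),\dot\lambda(t))=-\big(T(x(t),\lambda(t))+\varepsilon(t)(x(t),\lambda(t))\big)$, while the primal-dual optimality conditions for the viscosity curve read $T(x_{t},\lambda_{t})=-\varepsilon(t)(x_{t},\lambda_{t})$; combining these gives the identity
\[
	(\dot x(t),\dot\lambda(t))+\varepsilon(t)\big((x(t),\lambda(t))-(x_{t},\lambda_{t})\big)
	=-\big(T(x(t),\lambda(t))-T(x_{t},\lambda_{t})\big),
\]
so that the two integrands coincide and it is enough to control, say, $\theta(t):=\norm{T(x(t),\lambda(t))-T(x_{t},\lambda_{t})}^{2}/2$. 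By Theorem~\ref{sec4:th:convratevc} there is a constant $K\geq0$ with $2\theta(t)\leq K\big(\e^{-2\rho(t)}+\varepsilon^{2}(t)\big)$ for all $t\geq t_{+}$, possibly after enlarging $t_{+}$, which does not affect the limit to be computed (recall that $\varepsilon\notin\LL^{1}([t_{+},+\infty[)$ under these hypotheses, so $\rho(t)\to+\infty$).

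Next I would divide this pointwise bound by $\varepsilon(t)$, multiply by $\e^{\rho(t)}$, and use $\varepsilon(t)\e^{\rho(t)}=\frac{\d}{\d t}\e^{\rho(t)}$ to get $\e^{\rho(t)}\varepsilon(t)^{-1}\,2\theta(t)\leq K\varepsilon(t)^{-1}\e^{-\rho(t)}+K\frac{\d}{\d t}\e^{\rho(t)}$. Integrating over $[t_{+},t]$ and dividing by $\e^{\rho(t)}$ yields
\[
	\e^{-\rho(t)}\int_{t_{+}}^{t}\e^{\rho(\tau)}\frac{2\theta(\tau)}{\varepsilon(\tau)}\,\d\tau
	\leq K\,\e^{-\rho(t)}\int_{t_{+}}^{t}\frac{\e^{-\rho(\tau)}}{\varepsilon(\tau)}\,\d\tau
	+K\big(1-\e^{-(\rho(t)-\rho(t_{+}))}\big).
\]
The last term is bounded by $K$, so everything hinges on the boundedness of $\e^{-\rho(t)}\int_{t_{+}}^{t}\varepsilon(\tau)^{-1}\e^{-\rho(\tau)}\,\d\tau$ on $[t_{+},+\infty[$.

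This is precisely the point where the additional hypothesis that $\varepsilon$ is decreasing enters, and I would carry it out exactly as in Proposition~\ref{sec4:pr:ewma}: from $\dot\sigma(t)=2\varepsilon(t)\dot\varepsilon(t)+\ddot\varepsilon(t)\leq0$ on $[t_{+},+\infty[$ together with $\frac{\d}{\d t}\big(\dot\varepsilon(t)\e^{2\rho(t)}\big)=\dot\sigma(t)\e^{2\rho(t)}$ one obtains $-\dot\varepsilon(t)\e^{2\rho(t)}\geq-\dot\varepsilon(t_{+})\e^{2\rho(t_{+})}>0$; combined with $-\dot\varepsilon(t)\leq\varepsilon^{2}(t)$ (which is the first hypothesis $\sigma(t)\geq0$) this gives $\varepsilon(t)^{-1}\e^{-\rho(t)}\leq-\big(\dot\varepsilon(t_{+})\e^{2\rho(t_{+})}\big)^{-1}\frac{\d}{\d t}\e^{\rho(t)}$, and integrating over $[t_{+},t]$ produces the bound $\e^{-\rho(t)}\int_{t_{+}}^{t}\varepsilon(\tau)^{-1}\e^{-\rho(\tau)}\,\d\tau\leq-\big(\dot\varepsilon(t_{+})\e^{2\rho(t_{+})}\big)^{-1}$. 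Since the integrand on the left of the displayed inequality is non-negative, passing to the limit as $t\to+\infty$ gives the finiteness asserted in the first estimate, and hence, via the identity of the first paragraph, also in the second. The main obstacle is really just this last integral bound and the use it makes of the monotonicity of $\varepsilon$; the remainder is the bookkeeping already rehearsed in Theorem~\ref{sec4:th:convratevc} and Proposition~\ref{sec4:pr:ewma}.
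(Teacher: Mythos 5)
Your proof is correct, but it takes a different reduction than the paper. The paper's own argument never touches the pointwise rate of Theorem \ref{sec4:th:convratevc}: it observes, via the monotonicity of $T$ and the optimality conditions for $(x_{t},\lambda_{t})$, that
\begin{equation*}
	\norm{(\dot{x}(t),\dot{\lambda}(t))}^{2}\geq
	\norm{T(x(t),\lambda(t))-T(x_{t},\lambda_{t})}^{2}=2\theta(t),
\end{equation*}
and then concludes both estimates by straight comparison with the already-proved Proposition \ref{sec4:pr:ewma} (the exponentially weighted estimate for $\norm{(\dot{x},\dot{\lambda})}^{2}/\varepsilon$), so no new integration is needed. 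You instead exploit the exact identity $(\dot{x},\dot{\lambda})+\varepsilon(t)((x,\lambda)-(x_{t},\lambda_{t}))=-(T(x,\lambda)-T(x_{t},\lambda_{t}))$ — which nicely makes explicit that the two asserted integrands coincide, something the paper leaves implicit — then feed the pointwise bound $2\theta(t)=\OO(\e^{-2\rho(t)}+\varepsilon^{2}(t))$ from Theorem \ref{sec4:th:convratevc} into the weighted integral and control the resulting term $\e^{-\rho(t)}\int_{t_{+}}^{t}\varepsilon(\tau)^{-1}\e^{-\rho(\tau)}\d\tau$ by reproving the integral lemma that sits inside the proof of Proposition \ref{sec4:pr:ewma} (using $\dot{\sigma}\leq0$, $\sigma\geq0$ and the monotonicity of $\varepsilon$). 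So both routes ultimately rest on the same decreasing-$\varepsilon$ integral bound; the paper's is shorter because it reuses Proposition \ref{sec4:pr:ewma} as a black box, while yours is a touch more self-contained and makes the equality of the two estimates transparent. Two cosmetic remarks: your derivation needs $\dot{\varepsilon}(t_{+})<0$ (not just $\varepsilon$ decreasing), but this is exactly the same implicit assumption made in the paper's Proposition \ref{sec4:pr:ewma} and is harmless since $t_{+}$ can be shifted to a point where $\dot{\varepsilon}<0$ (otherwise $\varepsilon$ would be eventually constant, contradicting $\varepsilon(t)\to0$ with $\varepsilon>0$); and your handling of the enlargement of $t_{+}$ is fine because the head integral is fixed and multiplied by $\e^{-\rho(t)}\to0$.
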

\begin{proof}
	Let $S\times M$ be non-empty, let $(x_{t},\lambda_{t})=\zer T_{t}$, and let $\theta:[t_{0},+\infty[\ \to\RR$ be defined by $\theta(t)=\norm{(\dot{x}(t),\dot{\lambda}(t)+\varepsilon(t)((x(t),\lambda(t))-(x_{t},\lambda_{t}))}^{2}/2$. In view of \eqref{sec1:arrhurtik} and the system of primal-dual optimality conditions, for every $t\geq t_{0}$, we have
	\begin{align*}
		\norm{(\dot{x}(t),\dot{\lambda}(t))}^{2}&=
		\norm{T(x(t),\lambda(t))-T(x_{t},\lambda_{t})
			+\varepsilon(t)((x(t),\lambda(t))-(x_{t},\lambda_{t}))}^{2}\\
		&\geq\norm{T(x(t),\lambda(t))-T(x_{t},\lambda_{t})}^{2}
		+\varepsilon^{2}(t)\norm{(x(t),\lambda(t))-(x_{t},\lambda_{t})}^{2}\\
		&\geq\norm{T(x(t),\lambda(t))-T(x_{t},\lambda_{t})}^{2},
	\end{align*}
	where the first inequality follows from the monotonicity of $T$. Upon using \eqref{sec1:arrhurtik} again, we infer
	\begin{equation*}
		\norm{(\dot{x}(t),\dot{\lambda}(t))}^{2}\geq2\theta(t).
	\end{equation*}
	The assertions are now readily deduced as in Proposition \ref{sec4:pr:ewma}.
\end{proof}

Finally, let us provide an estimate on the \eqref{sec1:arrhurtik} solutions relative to the viscosity curve $(x_{t},\lambda_{t})$ assuming again that $T:X\times Y\to X\times Y$ verifies condition \eqref{sec1:errorbound}.
\begin{corollary}
	Under the hypotheses of Theorem \ref{sec4:th:convratevc}, suppose that $T:X\times Y\to X\times Y$ satisfies condition \eqref{sec1:errorbound}. Then the following assertion holds:
	\begin{equation*}
		\norm{(x(t),\lambda(t))-(x_{t},\lambda_{t})}^{2}=
		\OO\big(\e^{-2\rho(t)}+\,\varepsilon^{2}(t)\big)\
		\text{as $t\to+\infty$}.
	\end{equation*}
\end{corollary}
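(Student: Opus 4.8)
The plan is to simply combine the error-bound condition \eqref{sec1:errorbound} with the second estimate already established in Theorem \ref{sec4:th:convratevc}, exactly as Corollary \ref{sec4:co:fastbound} was deduced from Theorem \ref{sec4:th:convrate}. First I would fix $t\geq t_{0}$ and apply condition \eqref{sec1:errorbound} with the choices $(x,\lambda)=(x(t),\lambda(t))$ and $(\xi,\eta)=(x_{t},\lambda_{t})$, which gives the pointwise lower bound
\begin{equation*}
	\norm{T(x(t),\lambda(t))-T(x_{t},\lambda_{t})}^{2}\geq
	\alpha\norm{(x(t),\lambda(t))-(x_{t},\lambda_{t})}^{2}.
\end{equation*}

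Next I would invoke Theorem \ref{sec4:th:convratevc}, whose hypotheses are precisely those assumed here, to conclude that the left-hand side above satisfies
\begin{equation*}
	\norm{T(x(t),\lambda(t))-T(x_{t},\lambda_{t})}^{2}=
	\OO\big(\e^{-2\rho(t)}+\,\varepsilon^{2}(t)\big)\quad\text{as $t\to+\infty$}.
\end{equation*}
Dividing the displayed inequality by $\alpha>0$ then transfers this decay rate to $\norm{(x(t),\lambda(t))-(x_{t},\lambda_{t})}^{2}$, which is the assertion. (Note that $(x_{t},\lambda_{t})=\zer T_{t}$ is well defined and unique for each $t\geq t_{0}$ by the $\varepsilon(t)$-strong monotonicity of $T_{t}$, and that under \eqref{sec1:errorbound} the set $S\times M$ is a singleton, so the viscosity curve is the natural target.)

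There is essentially no obstacle here: the work has all been done in Theorem \ref{sec4:th:convratevc}, and condition \eqref{sec1:errorbound} is used only as a one-line quadratic lower bound. The only point requiring a modicum of care is that \eqref{sec1:errorbound} is a \emph{global} error-bound inequality valid for all pairs in $X\times Y$, so it may be applied directly without any restriction to a neighborhood or to the (bounded) range of the trajectory; the boundedness of $(x(t),\lambda(t))$ and of $(x_{t},\lambda_{t})$ (the latter from Lemma \ref{sec2:lm:vcgeo}) is not even needed for this step, having already been absorbed into the proof of Theorem \ref{sec4:th:convratevc}.
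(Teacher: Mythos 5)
Your proposal is correct and follows exactly the paper's own argument: the corollary is deduced by applying condition \eqref{sec1:errorbound} to the pair $(x(t),\lambda(t))$, $(x_{t},\lambda_{t})$ and then invoking the second estimate of Theorem \ref{sec4:th:convratevc}, just as Corollary \ref{sec4:co:fastbound} is obtained from Theorem \ref{sec4:th:convrate}. No gaps.
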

\begin{proof}
	In view of condition \eqref{sec1:errorbound}, the assertion follows at once from Theorem \ref{sec4:th:convratevc}.
\end{proof}

\subsection{The particular case $\varepsilon(t)=1/t^{p}$}
Let us now particularize the previous results to the case when the Tikhonov regularization function $\varepsilon:[t_{0},+\infty[\ \to\ ]0,+\infty[$ takes the form
\begin{equation*}
	\varepsilon(t)=\frac{1}{t^{p}}\quad\text{with $p\in\ ]0,1]$ and $t_{0}>0$}.
\end{equation*}
Since $\varepsilon(t)$ vanishes as $t\to+\infty$ with $\varepsilon\notin\LL^{1}([t_{0},+\infty[)$ and $\dot{\varepsilon}\in\LL^{1}([t_{0},+\infty[)$ for every $p\in\ ]0,1]$, we immediately deduce from Proposition \ref{sec3:pr:strongconv} that the solutions $(x(t),\lambda(t))$ of \eqref{sec1:arrhurtik} strongly converge toward $\proj_{S\times M}(0,0)$ as $t\to+\infty$. Moreover, for every $t\geq t_{0}$, we have $\dot{\varepsilon}(t)=-p/t^{p+1}$ and $\ddot{\varepsilon}(t)=p(p+1)/t^{p+2}$ so that
\begin{align*}
	\varepsilon^{2}(t)+\dot{\varepsilon}(t)
	&=\frac{1}{t^{2p}}-\frac{p}{t^{p+1}};\\
	2\varepsilon(t)\dot{\varepsilon}(t)+\ddot{\varepsilon}(t)
	&=-\frac{2p}{t^{2p+1}}+\frac{p(p+1)}{t^{p+2}}.
\end{align*}
In the case $p=1$, we have both $\varepsilon^{2}(t)+\dot{\varepsilon}(t)=0$ and $2\varepsilon(t)\dot{\varepsilon}(t)+\ddot{\varepsilon}(t)=0$ for every $t\geq t_{0}$. On the other hand, whenever $p\in\ ]0,1[$, we readily obtain
\begin{align*}
	\frac{1}{t^{2p}}-\frac{p}{t^{p+1}}&\geq0\quad\iff\quad
	\hspace{-7pt}\sqrt[\leftroot{1}\uproot{3}1-p]{p}\leq t;\\
	-\frac{2p}{t^{2p+1}}+\frac{p(p+1)}{t^{p+2}}&\leq0\quad\iff\quad
	\hspace{-7pt}\sqrt[\leftroot{1}\uproot{3}1-p]{\frac{p+1}{2}}\leq t.
\end{align*}
Consequently, for every $p\in\ ]0,1]$, there exists $t_{+}=\max\,\{t_{0},\hspace{-7pt}\sqrt[\leftroot{1}\uproot{3}1-p]{(p+1)/2}\}$ such that
\begin{align*}
	\begin{rcases}
		\begin{aligned}
			\varepsilon^{2}(t)+\dot{\varepsilon}(t)&\geq0\\
			2\varepsilon(t)\dot{\varepsilon}(t)+\ddot{\varepsilon}(t)&\leq0
			\hspace{1pt}
		\end{aligned}
	\end{rcases}\quad\forall t\geq t_{+},
\end{align*}
implying that the hypotheses of Theorem \ref{sec4:th:convrate} are verified. This immediately leads to the following assertion.
\begin{proposition}
	Let $S\times M$ be non-empty, let $(x,\lambda):[t_{0},+\infty[\ \to X\times Y$ be a solution of \eqref{sec1:arrhurtik}, and let $\varepsilon:[t_{0},+\infty[\ \to\ ]0,+\infty[$ be defined by $\varepsilon(t)=1/t^{p}$ with $p\in\ ]0,1]$ and $t_{0}>0$. Then, for every $(\bar{x},\bar{\lambda})\in S\times M$, it holds that
	\begin{align*}
		\norm{(\dot{x}(t),\dot{\lambda}(t))+\frac{1}{t^{p}}((x(t),\lambda(t))
		-(\bar{x},\bar{\lambda}))}^{2}&=
		\OO\Big(\e^{-2\rho(t)}+\,\frac{1}{t^{2p}}\Big)\
		\text{as $t\to+\infty$};\\
		\frac{1}{t^{p}}\big(L(x(t),\bar{\lambda})-L(\bar{x},\lambda(t))\big)&=
		\OO\Big(\e^{-2\rho(t)}+\,\frac{1}{t^{2p}}\Big)\
		\text{as $t\to+\infty$};\\
		\norm{T(x(t),\lambda(t))-T(\bar{x},\bar{\lambda})}^{2}&=
		\OO\Big(\e^{-2\rho(t)}+\,\frac{1}{t^{2p}}\Big)\
		\text{as $t\to+\infty$};\\
		\norm{(\dot{x}(t),\dot{\lambda}(t))}^{2}&=\OO\Big(\e^{-2\rho(t)}+\,\frac{1}{t^{2p}}\Big)\
		\text{as $t\to+\infty$}.
	\end{align*}
	Moreover, $(x(t),\lambda(t))$ converges strongly to $\proj_{S\times M}(0,0)$  as $t\to+\infty$.
\end{proposition}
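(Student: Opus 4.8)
The plan is to read off the four displayed estimates from the general convergence rate results of Section~\ref{sec4}, specialized to $\varepsilon(t)=1/t^{p}$, and to deduce the strong convergence from Proposition~\ref{sec3:pr:strongconv}.

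First I would note that the computations carried out just before the statement already establish that, for every $p\in\ ]0,1]$, there exists $t_{+}=\max\{t_{0},\sqrt[1-p]{(p+1)/2}\}\geq t_{0}$ with $\varepsilon^{2}(t)+\dot{\varepsilon}(t)\geq0$ and $2\varepsilon(t)\dot{\varepsilon}(t)+\ddot{\varepsilon}(t)\leq0$ for all $t\geq t_{+}$; since $\varepsilon$ is twice continuously differentiable on $[t_{0},+\infty[$ with $\varepsilon(t)\to0$ as $t\to+\infty$, assumption (A4)$^{\prime}$ holds and thus the hypotheses of Theorem~\ref{sec4:th:convrate} are met. Applying that theorem with $\varepsilon^{2}(t)=1/t^{2p}$ gives at once the first three estimates. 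Applying Proposition~\ref{sec4:pr:velrate} under the same hypotheses yields $\norm{(\dot{x}(t),\dot{\lambda}(t))}^{2}=\OO\big(\e^{-2\rho(t)}+\varepsilon^{2}(t)\big)=\OO\big(\e^{-2\rho(t)}+1/t^{2p}\big)$, which is the fourth estimate.

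For the strong convergence claim I would observe that $\int_{t_{0}}^{\infty}\tau^{-p}\d\tau=+\infty$ for every $p\in\ ]0,1]$, so $\varepsilon\notin\LL^{1}([t_{0},+\infty[)$, while $\dot{\varepsilon}(t)=-p/t^{p+1}$ satisfies $\int_{t_{0}}^{\infty}\tau^{-(p+1)}\d\tau<+\infty$, so $\dot{\varepsilon}\in\LL^{1}([t_{0},+\infty[)$; hence Proposition~\ref{sec3:pr:strongconv} applies and gives $\lim_{t\to+\infty}(x(t),\lambda(t))=\proj_{S\times M}(0,0)$. Alternatively, one may invoke Proposition~\ref{sec3:pr:strongconv2}, whose hypotheses were verified above.

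There is essentially no obstacle: the proposition is a direct corollary of the machinery of Sections~\ref{sec3} and \ref{sec4}. The only point requiring care is the sign analysis of $\varepsilon^{2}+\dot{\varepsilon}$ and $2\varepsilon\dot{\varepsilon}+\ddot{\varepsilon}$ for $p\in\ ]0,1[$, which has already been dispatched in the discussion preceding the statement; in the boundary case $p=1$ both quantities vanish identically, so the conditions hold with $t_{+}=t_{0}$.
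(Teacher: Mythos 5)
Your proposal is correct and follows exactly the route the paper takes: the sign analysis of $\varepsilon^{2}+\dot{\varepsilon}$ and $2\varepsilon\dot{\varepsilon}+\ddot{\varepsilon}$ carried out before the statement verifies the hypotheses of Theorem \ref{sec4:th:convrate} and Proposition \ref{sec4:pr:velrate}, which yield the four estimates, while $\varepsilon\notin\LL^{1}$ together with $\dot{\varepsilon}\in\LL^{1}$ gives the strong convergence via Proposition \ref{sec3:pr:strongconv}. Nothing is missing.
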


\begin{remark}
	In view of the above result, we observe that the fastest rate of convergence is achieved for the value $p=1$. In this case, the above asymptotic estimates reduce to
	\begin{align*}
		\norm{(\dot{x}(t),\dot{\lambda}(t))+\frac{1}{t}((x(t),\lambda(t))
		-(\bar{x},\bar{\lambda}))}^{2}&=\OO\Big(\frac{1}{t^{2}}\Big)\
		\text{as $t\to+\infty$};\\
		\frac{1}{t}\big(L(x(t),\bar{\lambda})-L(\bar{x},\lambda(t))\big)&=
		\OO\Big(\frac{1}{t^{2}}\Big)\ \text{as $t\to+\infty$};\\
		\norm{T(x(t),\lambda(t))-T(\bar{x},\bar{\lambda})}^{2}&=
		\OO\Big(\frac{1}{t^{2}}\Big)\ \text{as $t\to+\infty$};\\
		\norm{(\dot{x}(t),\dot{\lambda}(t))}^{2}&=\OO\Big(\frac{1}{t^{2}}\Big)\
		\text{as $t\to+\infty$}.
	\end{align*}
	Moreover, if $T$ satisfies condition \eqref{sec1:errorbound}, it further holds that
	\begin{equation*}
		\norm{(x(t),\lambda(t))-(\bar{x},\bar{\lambda})}^{2}=
		\OO\Big(\frac{1}{t^{2}}\Big)\ \text{as $t\to+\infty$}.
	\end{equation*}
\end{remark}

With respect to the viscosity curve $(x_{t},\lambda_{t})$, we have the following decay rate estimates as $t\to+\infty$:
\begin{proposition}
	Let $S\times M$ be non-empty and let $(x,\lambda):[t_{0},+\infty[\ \to X\times Y$ be a solution of \eqref{sec1:arrhurtik}. Let $(x_{t},\lambda_{t})=\zer T_{t}$ for each $t\geq t_{0}$ and let $\varepsilon:[t_{0},+\infty[\ \to\ ]0,+\infty[$ be defined by $\varepsilon(t)=1/t^{p}$ with $p\in\ ]0,1]$ and $t_{0}>0$. Then the following assertions hold:
	\begin{align*}
		\norm{(\dot{x}(t),\dot{\lambda}(t))+\frac{1}{t^{p}}((x(t),\lambda(t))
			-(x_{t},\lambda_{t}))}^{2}&=
		\OO\Big(\e^{-2\rho(t)}+\,\frac{1}{t^{2p}}\Big)\
		\text{as $t\to+\infty$};\\
		\norm{T(x(t),\lambda(t))-T(x_{t},\lambda_{t})}^{2}&=
		\OO\Big(\e^{-2\rho(t)}+\,\frac{1}{t^{2p}}\Big)\
		\text{as $t\to+\infty$}.
	\end{align*}
\end{proposition}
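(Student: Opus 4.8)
The plan is to read this statement as the specialization of Theorem~\ref{sec4:th:convratevc} to the particular Tikhonov regularization function $\varepsilon(t)=1/t^{p}$, in perfect analogy with how the preceding proposition specializes Theorem~\ref{sec4:th:convrate}. Thus the only task is to verify that the hypotheses of Theorem~\ref{sec4:th:convratevc} are met for this choice of $\varepsilon$ and then to translate the resulting $\OO$-estimates into the claimed form.

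First I would note that $\varepsilon(t)=1/t^{p}$ with $p\in\ ]0,1]$ and $t_{0}>0$ is (infinitely, hence in particular twice) continuously differentiable on $[t_{0},+\infty[$ with $\varepsilon(t)\to0$ as $t\to+\infty$, so assumption (A4)$^{\prime}$ holds. Next, reusing the elementary computations already carried out just above in this subsection, one has $\dot{\varepsilon}(t)=-p/t^{p+1}$ and $\ddot{\varepsilon}(t)=p(p+1)/t^{p+2}$, whence $\varepsilon^{2}(t)+\dot{\varepsilon}(t)=t^{-2p}-pt^{-(p+1)}$ and $2\varepsilon(t)\dot{\varepsilon}(t)+\ddot{\varepsilon}(t)=-2pt^{-(2p+1)}+p(p+1)t^{-(p+2)}$. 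For $p=1$ both expressions vanish identically, so the two sign conditions hold with $t_{+}=t_{0}$; for $p\in\ ]0,1[$ they hold for every $t\geq t_{+}:=\max\bigl\{t_{0},\sqrt[\leftroot{1}\uproot{3}1-p]{(p+1)/2}\bigr\}$. In either case, the standing assumptions of Theorem~\ref{sec4:th:convratevc} are in force for the solution $(x,\lambda)$ of \eqref{sec1:arrhurtik} and the well-defined viscosity curve $(x_{t},\lambda_{t})=\zer T_{t}$.

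It then remains to invoke Theorem~\ref{sec4:th:convratevc} with this $\varepsilon$. Since $\varepsilon^{2}(t)=1/t^{2p}$, the two estimates supplied by that theorem become precisely
\[
	\norm{(\dot{x}(t),\dot{\lambda}(t))+\tfrac{1}{t^{p}}((x(t),\lambda(t))-(x_{t},\lambda_{t}))}^{2}=\OO\bigl(\e^{-2\rho(t)}+\tfrac{1}{t^{2p}}\bigr)
\]
together with $\norm{T(x(t),\lambda(t))-T(x_{t},\lambda_{t})}^{2}=\OO\bigl(\e^{-2\rho(t)}+\tfrac{1}{t^{2p}}\bigr)$ as $t\to+\infty$, which is the assertion. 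No genuine obstacle arises here: the only point requiring care is the bookkeeping for the threshold $t_{+}$ beyond which the two sign conditions on $\varepsilon$ become valid, and this has already been dispatched in the text preceding the statement.
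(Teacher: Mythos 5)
Your proposal is correct and follows exactly the route the paper intends: the paper gives no separate proof for this proposition, treating it as an immediate specialization of Theorem~\ref{sec4:th:convratevc} once the sign conditions on $\varepsilon(t)=1/t^{p}$ have been verified with $t_{+}=\max\{t_{0},((p+1)/2)^{1/(1-p)}\}$, which is precisely your argument. Your bookkeeping of the threshold $t_{+}$ and the identification $\varepsilon^{2}(t)=1/t^{2p}$ match the computations carried out in the text preceding the statement.
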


\begin{remark}
	In the particular case $\varepsilon(t)=1/t$, and under the assumption that $T$ satisfies condition \eqref{sec1:errorbound}, we further have
	\begin{equation*}
		\norm{(x(t),\lambda(t))-(x_{t},\lambda_{t})}^{2}=
		\OO\Big(\frac{1}{t^{2}}\Big)\ \text{as $t\to+\infty$}.
	\end{equation*}
\end{remark}

%% NUMERICAL EXPERIMENTS
\section{Numerical experiments}\label{sec5}
In this section, we provide a simple yet representative example that allows for a direct exposition of our main results.
\begin{example}
	Let $X,Y=\RR$ and consider the saddle-value problem
	\begin{equation*}
		\min_{x\in\RR}\max_{\lambda\in\RR}L(x,\lambda),
	\end{equation*}
	where $L:\RR\times\RR\to\RR$ is defined in terms of the convex-concave and continuously differentiable bifunction $L(x,\lambda)=\lambda(x-1)$. Let us choose the Tikhonov regularization function $\varepsilon:[t_{0},+\infty[\ \to\ ]0,+\infty[$ as $\varepsilon(t)=1/t^{p}$ with $p\in\ ]0,1]$ and $t_{0}>0$. In this case, the \eqref{sec1:arrhurtik} differential system reduces to
	\begin{equation*}
		\begin{cases}
			\begin{aligned}
				\dot{x}+\lambda+\frac{x}{t^{p}}&=0\\
				\dot{\lambda}+1-x+\frac{\lambda}{t^{p}}&=0.
			\end{aligned}
		\end{cases}
	\end{equation*}
	The evolution of the solutions $(x(t),\lambda(t))$ of the \eqref{sec1:arrhurtik} differential system together with the viscosity curve $(x_{t},\lambda_{t})$ as $t\to+\infty$ for different values of the Tikhonov regularization parameter $p\in\ ]0,1]$ is depicted in Figures \ref{sec5:ex:fig1} and \ref{sec5:ex:fig2}. We thereby distinguish the cases $p=1$ (see Figure \ref{sec5:ex:fig1}) and $p\in\ ]0,1[$ (see Figure \ref{sec5:ex:fig2}). In any case, the initial data is set to $(x_{0},\lambda_{0})=(0,0)$ with $t_{0}=1/100$.

	{\bf The particular case $p=1$.}
	Since $\varepsilon(t)$ tends to zero as $t\to+\infty$ with $\varepsilon\notin\LL^{1}([t_{0},+\infty[)$ and $\dot{\varepsilon}\in\LL^{1}([t_{0},+\infty[)$, we readily observe from Figure \ref{sec5:ex:fig1} that the solution $(x(t),\lambda(t))$ of \eqref{sec1:arrhurtik} converges, as $t\to+\infty$, to the unique saddle point $(\bar{x},\bar{\lambda})=(1,0)$ of the bifunction $L$; cf. Proposition \ref{sec3:pr:strongconv}. Moreover, since $\varepsilon(t)$ verifies, for every $t\geq t_{0}$, the decisive conditions
	\begin{align*}
		\begin{cases}
			\begin{aligned}
				\varepsilon^{2}(t)+\dot{\varepsilon}(t)&=0\\
				2\varepsilon(t)\dot{\varepsilon}(t)+\ddot{\varepsilon}(t)&=0,
			\end{aligned}
		\end{cases}
	\end{align*}
	we obtain, in accordance with Theorem \ref{sec4:th:convrate} and Proposition \ref{sec4:pr:velrate}, that $\norm{(\dot{x}(t),\dot{\lambda}(t))+\varepsilon(t)((x(t),\lambda(t))-(\bar{x},\bar{\lambda}))}^{2}$ and $\norm{(\dot{x}(t),\dot{\lambda}(t))}^{2}$ obey the asymptotic estimate $\OO(1/t^{2})$ as $t\to+\infty$. In particular, as $\varepsilon(t)$ is decreasing, we have the refined estimate
	\begin{equation*}
		\lim_{t\to+\infty}\frac{1}{t}\int_{t_{0}}^{t}\tau^{2}
		\norm{(\dot{x}(\tau),\dot{\lambda}(\tau))}^{2}\d\tau<+\infty,
	\end{equation*}
	which suggests that $t\norm{(\dot{x}(t),\dot{\lambda}(t))}^{2}$ vanishes fast as $t\to+\infty$ in the sense of an ``exponentially weighted moving average''; cf. Proposition \ref{sec4:pr:ewma}. A similar behavior can be observed for $\norm{(x(t),\lambda(t))-(x_{t},\lambda_{t})}^{2}$. Finally, since the operator
	\begin{align*}
		T:\RR\times\RR&\longrightarrow\RR\times\RR\\
		(x,\lambda)&\longmapsto(\lambda,1-x)
	\end{align*}
	associated with the saddle function $L$ verifies condition \eqref{sec1:errorbound} with $\alpha=1$, we readily observe that $\norm{(x(t),\lambda(t))-(\bar{x},\bar{\lambda})}^{2}$ obeys the asymptotic estimate $\OO(1/t^{2})$ as $t\to+\infty$; see Corollary \ref{sec4:co:fastbound}. In this scenario, we also find that $\norm{(\dot{x}_{t},\dot{\lambda}_{t})}^{2}$ vanishes, as predicted by Lemma \ref{sec2:lm:vccondl}, according to the fast asymptotic estimate $\OO(1/(t^{2}+1)^2)$ as $t\to+\infty$.

	{\bf The particular case $p\in\ ]0,1[$.} Analyzing Figure \ref{sec5:ex:fig2}, we observe that the solutions $(x(t),\lambda(t))$ of the \eqref{sec1:arrhurtik} differential system still admit favorable convergence properties, but their decay rate is considerably degraded as the value of the Tikhonov regularization parameter $p$ decreases. As predicted by Theorem \ref{sec4:th:convrate} and Proposition \ref{sec4:pr:velrate}, we find that $\norm{(\dot{x}(t),\dot{\lambda}(t))+\varepsilon(t)((x(t),\lambda(t))-(\bar{x},\bar{\lambda}))}^{2}$ and $\norm{(\dot{x}(t),\dot{\lambda}(t))}^{2}$ obey the asymptotic estimate $\OO\big(\e^{-2\rho(t)}+\,1/t^{2p}\big)$ as $t\to+\infty$. However, this estimate is no longer sharp for $p\in\ ]0,1[$ due to the conservatism introduced by the inequalities
	\begin{align*}
		\begin{rcases}
			\begin{aligned}
				\varepsilon^{2}(t)+\dot{\varepsilon}(t)&\geq0\\
				2\varepsilon(t)\dot{\varepsilon}(t)+\ddot{\varepsilon}(t)&\leq0
				\hspace{1pt}
			\end{aligned}
		\end{rcases}\quad\forall t\geq t_{+}
	\end{align*}
	for some $t_{+}\geq t_{0}$. Given this observation, we recover the fact that the fastest convergence rate estimates are obtained whenever the Tikhonov regularization function $\varepsilon:[t_{0},+\infty[\ \to\ ]0,+\infty[$ is chosen according to the differential equation
	\begin{equation*}
		\dot{\varepsilon}(t)+\varepsilon^{2}(t)=0,
	\end{equation*}
	whose solutions take the form
	\begin{equation*}
		\varepsilon(t)=\frac{1}{t+c},\quad c>-t_{0}.
	\end{equation*}
	We leave the discussion on sharp asymptotic decay rate estimates for the solutions $(x(t),\lambda(t))$ of \eqref{sec1:arrhurtik} in the case when the Tikhonov regularization parameter $p$ is chosen in $]0,1[$ open for future investigations.
\end{example}

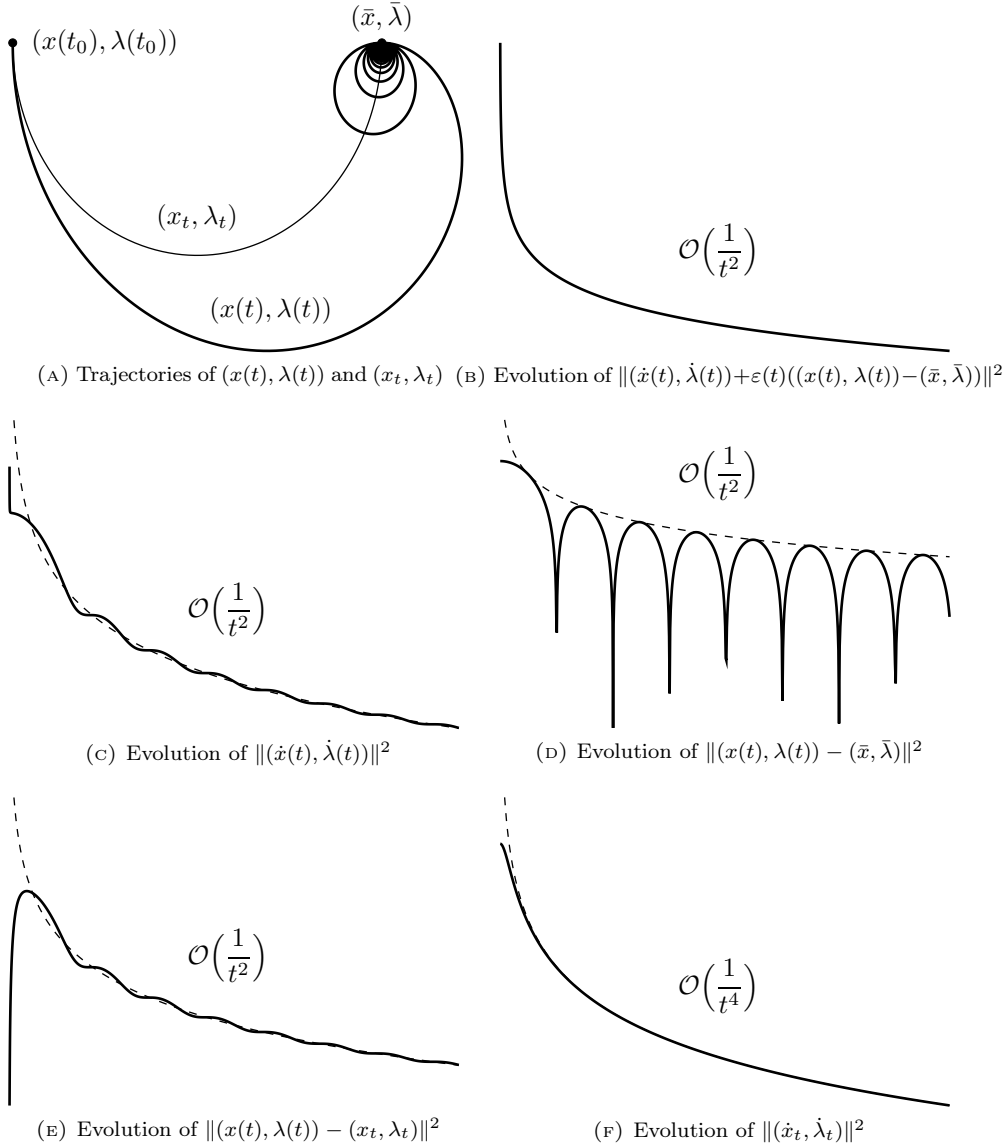
\begin{figure*}[!ht]
	\centering
	\captionsetup[subfigure]{width=154pt}%
	\subfloat[Trajectories of $(x(t),\lambda(t))$ and $(x_{t},\lambda_{t})$]{%
		\begin{tikzpicture}
			\pgfplotsset{width=8.75cm,height=6.5cm}
				\begin{axis}[hide axis,samples=250,clip=false]
					\addplot[line width=1pt] file {matlab/fig1a1.tex};
					\addplot[line width=.5pt] file {matlab/fig1a2.tex};
					\draw[fill=black] (0,0) circle (1.5pt);
					\draw[fill=black] (1,0) circle (1.5pt);
					\node at (.25,0) {$(x(t_{0}),\lambda(t_{0}))$};
					\node at (1,.06) {$(\bar{x},\bar{\lambda})$};
					\node at (.7,-.63) {$(x(t),\lambda(t))$};
					\node at (.5,-.41) {$(x_{t},\lambda_{t})$};
				\end{axis}
		\end{tikzpicture}
	}\hfill
	\captionsetup[subfigure]{width=208pt}%
	\subfloat[Evolution of $\norm{(\dot{x}(t),\dot{\lambda}(t))+\varepsilon(t)((x(t),\lambda(t))-(\bar{x},\bar{\lambda}))}^{2}$]{%
		\begin{tikzpicture}
			\pgfplotsset{width=8.75cm,height=6.5cm}
			\begin{axis}[hide axis,samples=250,ymode=log,clip=false]
				\addplot[line width=1pt] file {matlab/fig1b.tex};
			\end{axis}
			\node at (3.5,2.5) {$\OO\Big(\dfrac{1}{t^{2}}\Big)$};
		\end{tikzpicture}
	}\hfill
	\subfloat[Evolution of $\norm{(\dot{x}(t),\dot{\lambda}(t))}^{2}$]{%
		\begin{tikzpicture}
			\pgfplotsset{width=8.75cm,height=6.5cm}
			\begin{axis}[hide axis,samples=250,ymode=log,clip=false]
				\addplot[line width=1pt] file {matlab/fig1c.tex};
				\addplot[domain=.5:50,smooth,dashed,line width=.5pt]{.5/x^2};
			\end{axis}
			\node at (3.5,2.5) {$\OO\Big(\dfrac{1}{t^{2}}\Big)$};
		\end{tikzpicture}
	}\hfill
	\subfloat[Evolution of $\norm{(x(t),\lambda(t))-(\bar{x},\bar{\lambda})}^{2}$]{%
		\begin{tikzpicture}
			\pgfplotsset{width=8.75cm,height=6.5cm}
			\begin{axis}[hide axis,samples=250,ymode=log,clip=false]
				\addplot[line width=1pt] file {matlab/fig1d.tex};
				\addplot[domain=.5:50,smooth,dashed,line width=.5pt]{2/x^2};
			\end{axis}
			\node at (3.5,3.8) {$\OO\Big(\dfrac{1}{t^{2}}\Big)$};
		\end{tikzpicture}
	}\hfill
	\subfloat[Evolution of $\norm{(x(t),\lambda(t))-(x_{t},\lambda_{t})}^{2}$]{%
		\begin{tikzpicture}
			\pgfplotsset{width=8.75cm,height=6.5cm}
			\begin{axis}[hide axis,samples=250,ymode=log,clip=false]
				\addplot[line width=1pt] file {matlab/fig1e.tex};
				\addplot[domain=1.2:50,smooth,dashed,line width=.5pt]{.5/x^2};
			\end{axis}
			\node at (3.5,2.8) {$\OO\Big(\dfrac{1}{t^{2}}\Big)$};
		\end{tikzpicture}
	}\hfill
	\subfloat[Evolution of $\norm{(\dot{x}_{t},\dot{\lambda}_{t})}^{2}$]{%
		\begin{tikzpicture}
			\pgfplotsset{width=8.75cm,height=6.5cm}
			\begin{axis}[hide axis,samples=250,ymode=log,clip=false]
				\addplot[line width=1pt] file {matlab/fig1f.tex};
			\end{axis}
			\node at (3.5,2.8) {$\OO\Big(\dfrac{1}{(t^{2}+1)^{2}}\Big)$};
		\end{tikzpicture}
	}
	\caption{%
		Graphical view on the evolution of a solution $(x(t),\lambda(t))$ of the \eqref{sec1:arrhurtik} differential system together with the viscosity curve $(x_{t},\lambda_{t})$ as $t\to+\infty$ for the Tikhonov regularization parameter $p=1$.
	}\label{sec5:ex:fig1}
	\vspace{-10pt}
\end{figure*}

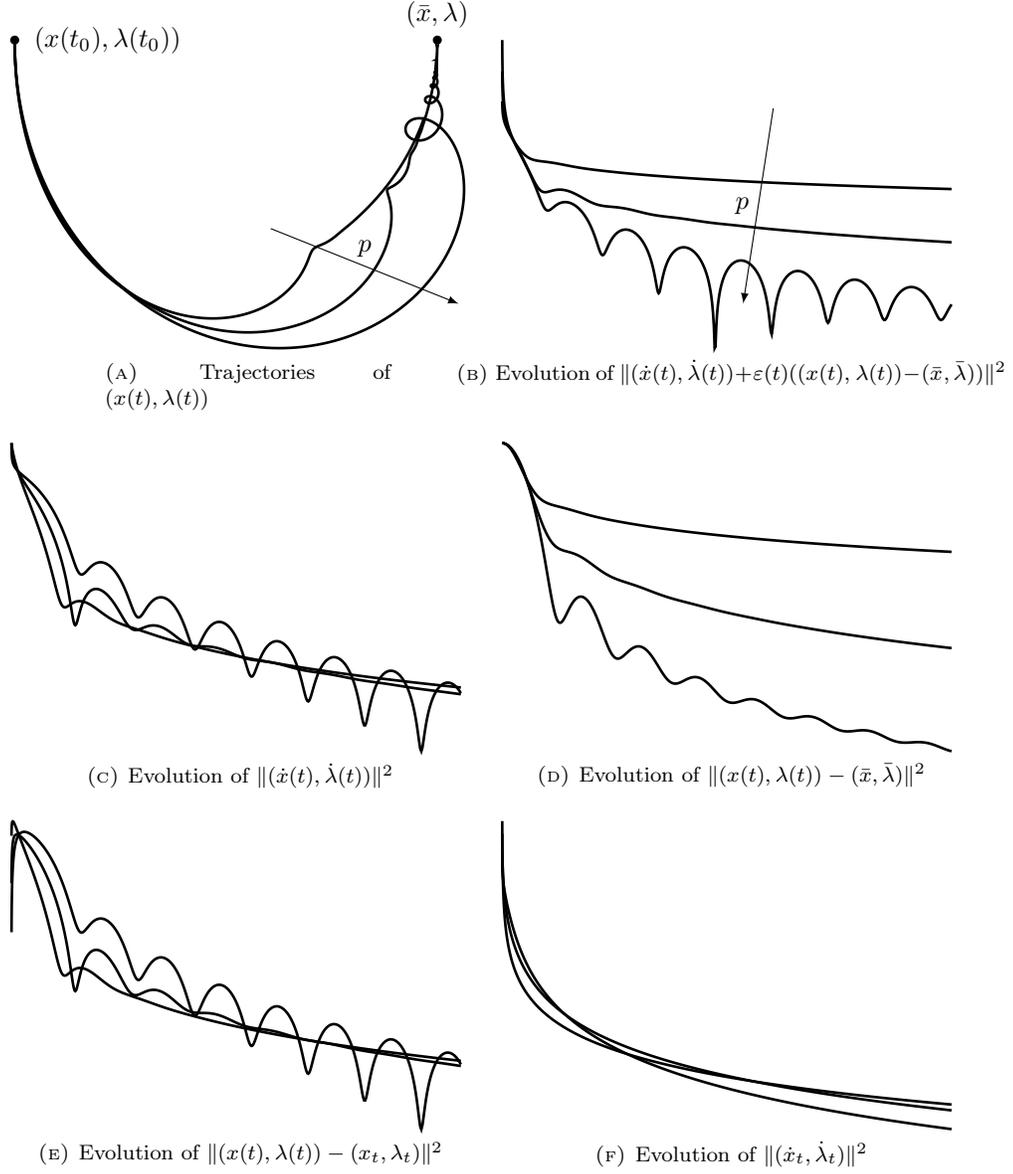
\begin{figure*}[!ht]
	\centering
	\captionsetup[subfigure]{width=154pt}%
	\subfloat[Trajectories of $(x(t),\lambda(t))$ and $(x_{t},\lambda_{t})$]{%
		\begin{tikzpicture}
			\pgfplotsset{width=8.75cm,height=6.5cm}
				\begin{axis}[hide axis,samples=250,clip=false]
					\addplot[line width=.5pt] file {matlab/fig1a2.tex};
					\addplot[line width=1pt] file {matlab/fig2a1.tex};
					\addplot[line width=1pt] file {matlab/fig2a2.tex};
					\addplot[line width=1pt] file {matlab/fig2a3.tex};
					\draw[fill=black] (0,0) circle (1.5pt);
					\draw[fill=black] (1,0) circle (1.5pt);
					\node at (.22,0) {$(x(t_{0}),\lambda(t_{0}))$};
					\node at (1,.06) {$(\bar{x},\bar{\lambda})$};
					\node at (.5,-.41) {$(x_{t},\lambda_{t})$};
				\end{axis}
				\draw[-latex] (4.3,2.3) -- node[above] {$p$} (6.1,.7);
		\end{tikzpicture}
	}\hfill
	\captionsetup[subfigure]{width=208pt}%
	\subfloat[Evolution of $\norm{(\dot{x}(t),\dot{\lambda}(t))+\varepsilon(t)((x(t),\lambda(t))-(\bar{x},\bar{\lambda}))}^{2}$]{%
	\begin{tikzpicture}
		\pgfplotsset{width=8.75cm,height=6.5cm}
		\begin{axis}[hide axis,samples=250,ymode=log,clip=false]
			\addplot[line width=1pt] file {matlab/fig2b1.tex};
			\addplot[line width=1pt] file {matlab/fig2b2.tex};
			\addplot[line width=1pt] file {matlab/fig2b3.tex};
		\end{axis}
		\draw[-latex] (4.2,3.6) -- node[left] {$p$} (3.8,1);
	\end{tikzpicture}
	}\hfill
	\subfloat[Evolution of $\norm{(\dot{x}(t),\dot{\lambda}(t))}^{2}$]{%
		\begin{tikzpicture}
			\pgfplotsset{width=8.75cm,height=6.5cm}
			\begin{axis}[hide axis,samples=250,ymode=log,clip=false]
				\addplot[line width=1pt] file {matlab/fig2c1.tex};
				\addplot[line width=1pt] file {matlab/fig2c2.tex};
				\addplot[line width=1pt] file {matlab/fig2c3.tex};
			\end{axis}
		\end{tikzpicture}
	}\hfill
	\subfloat[Evolution of $\norm{(x(t),\lambda(t))-(\bar{x},\bar{\lambda})}^{2}$]{%
		\begin{tikzpicture}
			\pgfplotsset{width=8.75cm,height=6.5cm}
			\begin{axis}[hide axis,samples=250,ymode=log,clip=false]
				\addplot[line width=1pt] file {matlab/fig2d1.tex};
				\addplot[line width=1pt] file {matlab/fig2d2.tex};
				\addplot[line width=1pt] file {matlab/fig2d3.tex};
			\end{axis}
			\draw[-latex] (4.2,3.4) -- node[left] {$p$} (3.8,.6);
		\end{tikzpicture}
	}\hfill
	\subfloat[Evolution of $\norm{(x(t),\lambda(t))-(x_{t},\lambda_{t})}^{2}$]{%
		\begin{tikzpicture}
			\pgfplotsset{width=8.75cm,height=6.5cm}
			\begin{axis}[hide axis,samples=250,ymode=log,clip=false]
				\addplot[line width=1pt] file {matlab/fig2e1.tex};
				\addplot[line width=1pt] file {matlab/fig2e2.tex};
				\addplot[line width=1pt] file {matlab/fig2e3.tex};
			\end{axis}
		\end{tikzpicture}
	}\hfill
	\subfloat[Evolution of $\norm{(\dot{x}_{t},\dot{\lambda}_{t})}^{2}$]{%
		\begin{tikzpicture}
			\pgfplotsset{width=8.75cm,height=6.5cm}
			\begin{axis}[hide axis,samples=250,ymode=log,clip=false]
				\addplot[line width=1pt] file {matlab/fig2f1.tex};
				\addplot[line width=1pt] file {matlab/fig2f2.tex};
				\addplot[line width=1pt] file {matlab/fig2f3.tex};
			\end{axis}
		\end{tikzpicture}
	}
	\caption{%
			Graphical view on the evolution of the \eqref{sec1:arrhurtik} solutions $(x(t),\lambda(t))$ and the viscosity curve $(x_{t},\lambda_{t})$ for the Tikhonov regularization parameters $p=1/4$, $p=1/2$, and $p=3/4$.
		}\label{sec5:ex:fig2}
		\vspace{-10pt}
\end{figure*}

%% APPENDIX
\section*{Appendix}\appendix\label{app}
\renewcommand\thetheorem{A.\arabic{theorem}}\setcounter{theorem}{0}
We collect here some auxiliary results which are used in the asymptotic analysis of the solutions of the \eqref{sec1:arrhurtik} differential system.

Let us first recall the following classical result as outlined in Cominetti et al.~\cite[Lemma 1]{RC-JP-SS:08}.
\begin{lemma}\label{app:lm:lsineq}
	Let $\phi:[t_{0},+\infty[\ \to\RR$ be continuously differentiable, let $\vartheta:[t_{0},+\infty[\ \to\RR$ be bounded, and let $\varepsilon:[t_{0},+\infty[\ \to[0,+\infty[$ be locally integrable such that
	\begin{equation*}
		\dot{\phi}(t)+\varepsilon(t)\phi(t)\leq\varepsilon(t)\vartheta(t)\quad
		\forall t\geq t_{0}.
	\end{equation*}
	Then $\phi(t)$ remains bounded on $[t_{0},+\infty[$. Moreover, if $\varepsilon\notin\LL^{1}([t_{0},+\infty[)$, then
	\begin{equation*}
		\limsup_{t\to+\infty}\,\phi(t)\leq\limsup_{t\to+\infty}\,\vartheta(t).
	\end{equation*}
\end{lemma}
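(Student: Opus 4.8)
The plan is to run the classical integrating‑factor (Gronwall‑type) argument. First I would introduce the primitive $\rho(t):=\int_{t_0}^{t}\varepsilon(\tau)\,\d\tau$, which is nonnegative, nondecreasing and locally absolutely continuous, with $\dot\rho=\varepsilon$ almost everywhere; since $\varepsilon$ is only assumed locally integrable I would phrase the computation in terms of the absolutely continuous map $t\mapsto\e^{\rho(t)}\phi(t)$ rather than a pointwise derivative. Multiplying the hypothesis $\dot\phi+\varepsilon\phi\le\varepsilon\vartheta$ by the integrating factor $\e^{\rho(t)}$ recasts the left‑hand side as $\frac{\d}{\d t}\big(\e^{\rho(t)}\phi(t)\big)$, so that integrating over $[t_0,t]$ and dividing by $\e^{\rho(t)}$ produces the master estimate
\[
	\phi(t)\le\e^{-\rho(t)}\phi(t_0)
	+\e^{-\rho(t)}\int_{t_0}^{t}\e^{\rho(\tau)}\varepsilon(\tau)\vartheta(\tau)\,\d\tau
	\qquad\forall\,t\ge t_0,
\]
and both conclusions are then squeezed out of this single inequality together with the elementary identity $\e^{\rho(\tau)}\varepsilon(\tau)=\frac{\d}{\d\tau}\e^{\rho(\tau)}$.

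For the boundedness assertion I would bound $\vartheta$ by a constant $C\ge0$; since $\e^{\rho(\tau)}\varepsilon(\tau)\ge0$, the integral term in the master estimate is at most $C\,\e^{-\rho(t)}(\e^{\rho(t)}-1)=C(1-\e^{-\rho(t)})\le C$, and together with $0<\e^{-\rho(t)}\le1$ this gives $\phi(t)\le\abs{\phi(t_0)}+C$ for all $t\ge t_0$, i.e.\ $\phi$ stays bounded above on $[t_0,+\infty[$ (in every application of the lemma $\phi$ is a nonnegative quantity, so this is the desired boundedness).

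For the $\limsup$ estimate I would put $\ell:=\limsup_{t\to+\infty}\vartheta(t)$ and assume $\ell<+\infty$ (otherwise there is nothing to prove); then, given $\eta>0$, pick $t_1\ge t_0$ with $\vartheta(\tau)\le\ell+\eta$ for all $\tau\ge t_1$ and split the integral in the master estimate at $t_1$. Over $[t_1,t]$ the bound $\vartheta\le\ell+\eta$ together with $\e^{\rho(\tau)}\varepsilon(\tau)\ge0$ yields a tail contribution at most $(\ell+\eta)\,\e^{-\rho(t)}(\e^{\rho(t)}-\e^{\rho(t_1)})=(\ell+\eta)\big(1-\e^{\rho(t_1)-\rho(t)}\big)$, while the head over $[t_0,t_1]$ is a fixed finite number times $\e^{-\rho(t)}$. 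Here — and only here — the hypothesis $\varepsilon\notin\LL^1([t_0,+\infty[)$ is used: it forces $\rho(t)\to+\infty$ as $t\to+\infty$, whence $\e^{-\rho(t)}\phi(t_0)\to0$, the head term $\to0$, and $\e^{\rho(t_1)-\rho(t)}\to0$. Passing to the upper limit in the master estimate then gives $\limsup_{t\to+\infty}\phi(t)\le\ell+\eta$, and letting $\eta\downarrow0$ concludes. I do not expect a genuine obstacle: the argument is routine, and the only mild points of care are working with the absolutely continuous function $\e^{\rho}\phi$ under the sole assumption that $\varepsilon$ is locally integrable, and noting that the final estimate $(\ell+\eta)\big(1-\e^{\rho(t_1)-\rho(t)}\big)\to\ell+\eta$ holds irrespective of the sign of $\ell+\eta$.
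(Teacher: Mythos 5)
Your argument is correct and is exactly the classical integrating-factor proof: the paper does not prove this lemma but recalls it from Cominetti et al., where the same computation based on the absolutely continuous function $t\mapsto\e^{\rho(t)}\phi(t)$ with $\rho(t)=\int_{t_0}^{t}\varepsilon(\tau)\d\tau$ is carried out. Your remark that the hypotheses only deliver boundedness from above is accurate (boundedness below can indeed fail, e.g.\ $\varepsilon\equiv0$ and $\phi(t)=-t$), and it is harmless here since in every application of the lemma one has $\phi\geq0$.
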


For the following basic inequality of Gronwall-type, the reader is referred to Br{\'e}zis \cite[Lemma A.5]{HB:73}.
\begin{lemma}\label{app:lm:gtineq}
	Let $\phi:[t_{0},+\infty[\ \to\RR$ be continuous and non-negative, and let $\vartheta:[t_{0},+\infty[\ \to[0,+\infty[$ be locally integrable such that
	\begin{equation*}
		\frac{1}{2}\phi^{2}(t)\leq\frac{1}{2}\phi^{2}(t_{0})
		+\int_{t_{0}}^{t}\vartheta(\tau)\phi(\tau)\d\tau\quad\forall t\geq t_{0}.
	\end{equation*}
	Then it holds that
	\begin{equation*}
		\phi(t)\leq\phi(t_{0})+\int_{t_{0}}^{t}\vartheta(\tau)\d\tau\quad
		\forall t\geq t_{0}.
	\end{equation*}
\end{lemma}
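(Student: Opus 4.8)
The plan is to recognize the right-hand side of the hypothesis as an absolutely continuous majorant of $\tfrac12\phi^2$ and then to differentiate its square root. First I would fix $\delta>0$ and introduce
\[
	\Phi_\delta(t)=\tfrac12\phi^2(t_0)+\delta+\int_{t_0}^{t}\vartheta(\tau)\phi(\tau)\,\d\tau,
\]
which is well defined since $\phi$ is continuous, hence locally bounded, so that $\vartheta\phi$ is locally integrable. By construction, $\Phi_\delta$ is absolutely continuous on the compact subintervals of $[t_0,+\infty[$, satisfies $\dot\Phi_\delta(t)=\vartheta(t)\phi(t)\ge0$ for almost every $t\ge t_0$ (as $\vartheta,\phi\ge0$), and is bounded below by $\delta>0$. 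Moreover, the hypothesis reads $\tfrac12\phi^2(t)\le\Phi_\delta(t)-\delta\le\Phi_\delta(t)$, that is, $\phi(t)\le\sqrt{2\Phi_\delta(t)}$ for every $t\ge t_0$.

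Next, since $s\mapsto\sqrt{s}$ is Lipschitz continuous on $[\delta,+\infty[$, the map $t\mapsto\sqrt{\Phi_\delta(t)}$ is absolutely continuous, and the chain rule together with the bound $\phi(t)\le\sqrt{2\Phi_\delta(t)}$ and the sign $\vartheta(t)\ge0$ yields, for almost every $t\ge t_0$,
\[
	\frac{\d}{\d t}\sqrt{\Phi_\delta(t)}
	=\frac{\vartheta(t)\phi(t)}{2\sqrt{\Phi_\delta(t)}}
	\le\frac{\vartheta(t)}{\sqrt{2}}.
\]
Integrating over $[t_0,t]$ and recalling $\Phi_\delta(t_0)=\tfrac12\phi^2(t_0)+\delta$, I would obtain
\[
	\phi(t)\le\sqrt{2\Phi_\delta(t)}\le\sqrt{\phi^2(t_0)+2\delta}+\int_{t_0}^{t}\vartheta(\tau)\,\d\tau
	\qquad\forall t\ge t_0,
\]
and letting $\delta\to0^{+}$ then delivers the asserted inequality (using $\sqrt{\phi^2(t_0)}=\phi(t_0)$ since $\phi\ge0$).

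The one step demanding care --- which I regard as the main technical obstacle --- is precisely the introduction of the positive cushion $\delta$. Without it, the function $\Phi$ may vanish identically on an initial interval (for instance if $\phi(t_0)=0$ and $\vartheta\equiv0$ near $t_0$), so that $\sqrt{\Phi}$ need not be absolutely continuous nor admit the pointwise derivative exploited above, and the quotient $\vartheta\phi/(2\sqrt{\Phi})$ is not licit there. The strictly positive lower bound $\Phi_\delta\ge\delta$ legitimizes both the chain rule and the division, while the final passage to the limit $\delta\to0^{+}$ recovers the sharp constant in the conclusion. Everything else reduces to routine manipulations with absolutely continuous functions.
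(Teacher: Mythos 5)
Your proof is correct, and since the paper gives no proof of its own here (it simply defers to Br\'ezis, Lemma A.5), there is nothing to diverge from: your argument --- adding the cushion $\delta>0$, taking the square root of the absolutely continuous majorant, differentiating, integrating, and letting $\delta\to0^{+}$ --- is precisely the classical argument from that reference. The attention you pay to the possible vanishing of $\Phi$ is exactly the right technical point.
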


%% REFERENCES
\bibliographystyle{amsplain} % amsplain, amsalpha, or natbib
\bibliography{alias,main}

\end{document}